\theoremstyle{plain}
\newtheorem{theorem}{Theorem}[section]
\newtheorem{lemma}[theorem]{Lemma}
\newtheorem{corollary}[theorem]{Corollary}
\theoremstyle{definition}
\newtheorem{assumption}[theorem]{Assumption}
\theoremstyle{remark}
\newtheorem{remark}[theorem]{Remark}
\newcommand{\T}{\top}
\title{B-ary Tree Push-Pull Method is Provably Efficient \\
for Distributed Learning on Heterogeneous Data}
\author{%
  Runze You\\
  School of Data Science\\
  The Chinese University of Hong Kong, Shenzhen (CUHK-Shenzhen)\\
  % Pittsburgh, PA 15213 \\
  \texttt{runzeyou@link.cuhk.edu.cn} \\
  % examples of more authors
  \And
  Shi Pu \\
  School of Data Science \\
  The Chinese University of Hong Kong, Shenzhen (CUHK-Shenzhen)\\
  \texttt{pushi@cuhk.edu.cn} \\
  % \AND
  % Coauthor \\
  % Affiliation \\
  % Address \\
  % \texttt{email} \\
  % \And
  % Coauthor \\
  % Affiliation \\
  % Address \\
  % \texttt{email} \\
  % \And
  % Coauthor \\
  % Affiliation \\
  % Address \\
  % \texttt{email} \\
}
\newcommand{\bA}{{\mathbf{A}}}
\newcommand{\bB}{{\mathbf{B}}}
\newcommand{\cC}{{\mathcal{C}}}
\newcommand{\cD}{{\mathcal{D}}}
\newcommand{\bfe}{{\mathbf{e}}}
\newcommand{\cE}{{\mathcal{E}}}
\newcommand{\bF}{{\mathbf{F}}}
\newcommand{\cF}{{\mathcal{F}}}
\newcommand{\bG}{{\mathbf{G}}}
\newcommand{\cG}{{\mathcal{G}}}
\newcommand{\bI}{{\mathbf{I}}}
\newcommand{\cI}{{\mathcal{I}}}
\newcommand{\cN}{{\mathcal{N}}}
\newcommand{\cO}{{\mathcal{O}}}
\newcommand{\bQ}{{\mathbf{Q}}}
\newcommand{\cR}{{\mathcal{R}}}
\newcommand{\cS}{{\mathcal{S}}}
\newcommand{\bu}{{\mathbf{u}}}
\newcommand{\bU}{{\mathbf{U}}}
\newcommand{\cU}{{\mathcal{U}}}
\newcommand{\bv}{{\mathbf{v}}}
\newcommand{\bV}{{\mathbf{V}}}
\newcommand{\bW}{{\mathbf{W}}}
\newcommand{\bx}{{\mathbf{x}}}
\newcommand{\bX}{{\mathbf{X}}}
\newcommand{\bY}{{\mathbf{Y}}}
\newcommand{\bZ}{{\mathbf{Z}}}
\newcommand{\bxi}{{\boldsymbol{\xi}}}
\newcommand{\bPi}{{\boldsymbol{\Pi}}}
\newcommand{\expect}{\mathbb{E}}
\newcommand{\reals}{\mathbb{R}}
\newcommand{\prt}[1]{\left(#1\right)}
\newcommand{\brk}[1]{\left[#1\right]}
\newcommand{\crk}[1]{\left\{#1\right\}}
\newcommand{\norm}[1]{\left\|#1\right\|}
\newcommand{\mone}{\mathbf{1}}
\begin{document}

\maketitle

\begin{abstract} 
    This paper considers the distributed learning problem where a group of agents cooperatively minimizes the summation of their local cost functions based on peer-to-peer communication.
    Particularly, we propose a highly efficient algorithm, termed ``B-ary Tree Push-Pull'' (BTPP), that employs two B-ary spanning trees for distributing the information related to the parameters and stochastic gradients across the network. The simple method is efficient in communication since each agent interacts with at most $(B+1)$ neighbors per iteration. More importantly, BTPP achieves linear speedup for smooth nonconvex and strongly convex objective functions with only $\tilde{O}(n)$ and $\tilde{O}(1)$ transient iterations, respectively,
    significantly outperforming the state-of-the-art results to the best of our knowledge. Our code is available at \href{https://github.com/ryou98/BTPP}{https://github.com/ryou98/BTPP}.
\end{abstract}

\section{Introduction}

In this paper, we consider a group of agents, labeled as $\cN := \{1, 2,\ldots, n\}$, in which each agent $i$ holds its own local cost function $f_i : \reals^p \rightarrow \reals$ and communicates only within its direct neighborhood. We investigate how the agents collaborate to locate $x\in \reals^p$ that minimizes the average of all the cost functions:
    \begin{equation}
        \label{eq:obj}
        \min_{x \in \reals^p } f(x)  \prt{=  \frac{1}{n} \sum_{i=1}^{n} f_i(x) },
    \end{equation}
    where $f_i(x) := \expect_{\xi_i\sim \cD_i}\brk{F_i(x;\xi_i)}$. Here $\xi_i$ denotes the local data of agent $i$ that follows the local distribution $\cD_i$. Data heterogeneity exists if $\crk{\cD_i}_{i=1}^{n}$ are not identical.

    To solve problem \eqref{eq:obj}, we assume each agent $i$ queries a stochastic oracle ($\cS\cO$) to obtain noisy gradient samples.
   Stochastic gradients appear in many areas including online distributed learning \cite{recht2011hogwild,dean2012large}, reinforcement learning \cite{mnih2013playing, lillicrap2015continuous}, generative modeling \cite{goodfellow2014generative, kingma2019introduction}, and parameter estimation \cite{bottou2012stochastic, srivastava2014dropout}. \cref{a.var} ensures that the gradient estimator $g_i(x;\xi_i)$ remains unbiased with a bounded variance for any given $\bx$, while independent samples $\xi_i$ are gathered continuously over time. In addition, the assumption is critical in simulation-based optimization as gradient estimation often encounters noise from multiple sources, such as modeling and discretization errors, or limitations due to finite sample sizes in Monte-Carlo methods \cite{kleijnen2018design}. 

    Modern optimization and machine learning typically involve tremendous data samples and model parameters. The scale of these problems calls for efficient distributed algorithms across multiple computing nodes. Recently, distributed algorithms dealing with problem \eqref{eq:obj} have been studied extensively in the literature; see, e.g., \cite{pu2021distributed,lian2017can,ding2023dsgd,ying2021exponential}.
    Traditional distributed learning approaches typically follow a centralized master-worker setup, where each worker node communicates with a (virtual) central server \cite{li2014scaling}.  
    However, such a communication pattern incurs significant communication overheads and long latency, especially when the training requires a large number of computing nodes. 
    
    Decentralized learning is an emerging paradigm to save communication costs, where the computing nodes are connected through a certain network topology (e.g., ring, grid, hypercube).
    Decentralized algorithms do not rely on central servers: the agents maintain the similarity among their copies of model parameters through peer-to-peer messages passing by communicating locally with immediate neighbors in the network. Such a setup allows each node to communicate with only a few peers and hence incurs much lower communication overhead \cite{assran2019stochastic}. Moreover, it offers strong promise for new applications, allowing agents to collaboratively train a model while respecting the data locality and privacy of each contributor.
    
    Specifically, in decentralized stochastic gradient methods, the agents share their local stochastic gradient updates through gossip communication \cite{xiao2004fast}. At every iteration, the local updates are sent to the neighbors of each agent who iteratively propagate the information through the network. Typically, the agents employ iterative gossip averaging of their neighbors' models with their own, where the averaging weights are chosen to ensure asymptotic uniform distribution of each update across the network. However, local averaging is less effective in ``mixing'' information which makes decentralized algorithms converge slower than their centralized counterparts. 
    Generally speaking, the network topology determines both the number of per-iteration communications and the convergence rates of decentralized algorithms, leading to a trade-off. For example, a densely-connected graph enables decentralized methods to converge faster but results in less efficient communication since each node needs to communicate with more neighbors. By contrast, a sparsely-connected topology results in a slower convergence rate but also reduces the per-iteration communication cost \cite{pu2021distributed,pu2021sharp,yuan2023removing}. In particular, for smooth and non-convex objective functions, it has been shown that decentralized stochastic gradient methods (with arbitrary topology) can achieve the same convergence rate as the centralized SGD method, but only after an initial period of iterations has passed \cite{lian2017can,ying2021exponential,pu2020asymptotic}. The number of transient iterations (transient time) heavily depends on the network topology, and thus a practical decentralized stochastic gradient algorithm should aim to minimize the transient time while keeping the number of per-iteration communications small (e.g., over a a sparsely-connected topology).
    Such an observation has motivated several recent works, which consider network topologies with $\Theta(1)$ per-iteration communications (or degree) for each node; see, e.g., \cite{ying2021exponential,song2022communication}.
    
    This work considers an alternative mechanism to gossip averaging, called ``B-ary Tree Push-Pull'' (BTPP), inherited from the Push-Pull method in \cite{pu2020push,xin2018linear}. Rather than relaying the messages over one graph at every iteration, BTPP uses two B-ary trees ($\cG_{\cR}$ and $\cG_{\cC}$) to spread the information about the parameters and the stochastic gradients, respectively. Each agent assigned in the B-ary tree acts as a worker on an assembly line. The model parameters are transmitted through the graph $\cG_{\cR}$ from the parent nodes to the child nodes. Meanwhile, the stochastic gradients are computed under the current model parameters and accumulated through the inverse graph of $\cG_{\cR}$ denoted as $\cG_{\cC}$. 
    BTPP can be viewed as a semi-(de)centralized approach given the critical role of node $1$.
    Notably, the corresponding mixing matrices of $\cG_{\cR}$ and $\cG_{\cC}$ only consist of $0$'s and $1$'s, which together with the B-ary Tree topology design, results in high algorithmic efficiency. We show BTPP achieves an $\tilde{\cO}(n)$ transient time under smooth nonconvex objective functions with $\Theta(1)$ per-iteration communications for each agent. By comparison, the state-of-the-art transient time is $\cO(n^3)$ (see \cref{tab:topo}). 

    \subsection{Related Works}

\paragraph{Decentralized Learning} Decentralized Stochastic Gradient Descent (DSGD) type algorithms are increasingly popular for accelerating the training of large-scale machine learning models \cite{lian2017can,ying2021exponential,koloskova2019decentralized} . These algorithms have been adapted under a range of practical settings, including those discussed in \cite{assran2019stochastic,lin2021quasi}. However, DSGD suffers from data heterogeneity \cite{koloskova2020unified}, which triggers more advanced techniques such as EXTRA \cite{shi2015extra}, Exact-Diffusion/$\text{D}^2$ \cite{li2019decentralized}, and gradient tracking \cite{pu2021distributed}. The Push-Pull method \cite{pu2020push,xin2018linear} which enjoys broad topological requirements was introduced for deterministic decentralized optimization under strongly convex objectives. This work particularly takes advantage of the flexibility in the network design of Push-Pull, utilizing the B-ary tree family, while considering stochastic gradients for minimizing smooth nonconvex objectives.

\paragraph{Topology Design} Decentralized stochastic gradient algorithms often rely on gossip averaging over various topologies such as rings, grids, and tori \cite{nedic2018network}. The hypercube graph \cite{trevisan2017lecture} strikes a balance between the communication efficiency and the consensus rate, but the network size is constrained to be the power of two.  
The work in \cite{ying2021exponential} re-examined the static exponential graph with $\Theta(\ln(n))$ degree and introduced a one-peer exponential graph with $\Theta(1)$ degree while preserving the consensus properties under the specific requirement of $n$. The paper \cite{takezawa2023beyond} proposed a base-($k+1$) graph as an enhancement that achieves similar convergence rate as in \cite{ying2021exponential} under arbitrary network size by sequentially employing multiple graph topologies (splitting an all-connected graph into $\Theta(\ln(n))$ different subgraphs). DSGD-CECA \cite{ding2023dsgd} requires roughly $\lceil\log_2(n)\rceil$  rounds of message passing for global averaging with $\Theta(n)$ network topologies. 
OD(OU)-EquiDyn \cite{song2022communication} introduces algorithms that employ various topologies to achieve network-size independent consensus rates. 
RelaySGD \cite{vogels2021relaysum} offers a relay-based algorithm that ensures $\Theta(1)$ per-iteration communication across different topologies. 

The above-mentioned methods all enjoy comparable convergence rates with centralized SGD (and thus achieves linear speedup) when the number of iterations $T$ is large enough. The transient times are generally in the order of $\tilde{\cO}(n^3)$  under smooth nonconvex objectives (see \cref{tab:topo}) and $\tilde{\cO}(n)$ under smooth strongly convex  objectives (see \cref{tab:convex}). 

Note that the above works and this paper generally consider training machine learning modes within high-performance data-center clusters, in which the network topology can be fully controlled: any two nodes can directly communicate over the network when necessary. By comparison, in some other scenarios, the underlying network topology is fixed, and the communication between two nodes is constrained (e.g., in wireless sensor networks, internet of vehicles, etc).

\begin{table*}
    \vskip 0.15in
    \begin{center}
    \begin{small}
        \begin{sc}
    \begin{tabular}{ccccc}
        \toprule
        Algorithm   & Per-iter Comm. & Size $n$ & Based Graph  & Trans. Iter.\\
        \midrule
        $D^2$ (Ring) \cite{tang2018d}  & $\Theta(1)$     & arbitrary & 1  & $\cO(n^{11})$  \\
        DSGD (Ring)  \cite{nedic2018network}           & $\Theta(1)$     & arbitrary & 1  & $\cO(n^7)$\\
        Hypercube  \cite{trevisan2017lecture}      & $\Theta(\ln(n))$& power of 2& 1 & $\tilde{\cO}(n^3)$\\
        Static Exp. \cite{ying2021exponential}     &  $\Theta(\ln(n))$& arbitrary & 1 & $\tilde{\cO}(n^3)$ \\
        O.-P. Exp.  \cite{ying2021exponential} & 1               & power of 2& $\Theta(\ln(n))$ & $\tilde{\cO}(n^3)$\\
        RelaySGD    \cite{vogels2021relaysum}    & $\Theta(1)$& arbitrary & 1 & $\cO(n^3)$\\
        OD(OU)-EquiDyn \cite{song2022communication} & 1 & arbitrary & $\Theta(n)$ & $\cO(n^3)$\\
        DSGD-CECA \cite{ding2023dsgd} &  $\Theta(1)$ & arbitrary & $\Theta(\ln(n))$ & $\tilde{\cO}(n^3)$\\
        Base-($k+1$) \cite{takezawa2023beyond} & $\Theta(1)$& arbitrary &$\Theta(\ln(n))$ & $\tilde{\cO}(n^3)$  \\
        \textbf{BTPP  (Ours)} &  $ \boldsymbol{ \Theta(1)}$   & \textbf{arbitrary} & \textbf{2}  & $\boldsymbol{\tilde{\cO}(n)}$ \\
        \bottomrule
      \end{tabular}
        \end{sc}
    \end{small}
        
    \end{center}
    \caption{\label{tab:topo} Comparison of different algorithms for distributed stochastic optimization under smooth nonconvex objectives. 
    ``Per-iter Comm.'' denotes the number of per-iteration communications or neighbors (degree) for each agent.
    ``Based Graph'' represents the number of required graph topologies during the entire training procedure. 
    ``Trans. Iter.'' represents the number of transient iterations. The notation $\tilde{\cO}(\cdot)$ hides all the polylogarithmic factors.
    }
    \vskip -0.1in
\end{table*}

%%%%%%%%%%%%%%%%%%%%%%%%%%%%%%%%%%%%%%%%%%%%%%%%%%%%%
\begin{table*}
    \vskip 0.15in
    \begin{center}
    \begin{small}
        \begin{sc}
    \begin{tabular}{ccccc}
        \toprule
        Algorithm   & Per-iter Comm. & Size $n$ & Based Graph  & Trans. Iter.\\
        \midrule
        DSGD (Ring)  \cite{nedic2018network}           & $\Theta(1)$     & arbitrary & 1  & $\tilde{\cO}(n^5)$\\
        Static Exp. \cite{ying2021exponential}     &  $\Theta(\ln(n))$& arbitrary & 1 & $\tilde{\cO}(n)$ \\
        O.-P. Exp.  \cite{ying2021exponential} & 1               & power of 2& $\Theta(\ln(n))$ & $\tilde{\cO}(n)$\\
        RelaySGD    \cite{vogels2021relaysum}    & $\Theta(1)$& arbitrary & 1 & $\tilde{\cO}(n)$\\
        OD(OU)-EquiDyn \cite{song2022communication} & 1 & arbitrary & $\Theta(n)$ & $\tilde{\cO}(n)$\\
        \textbf{BTPP  (Ours)} &  $ \boldsymbol{ \Theta(1)}$   & \textbf{arbitrary} & \textbf{2}  & $\boldsymbol{\tilde{\cO}(1)}$ \\
        \bottomrule
      \end{tabular}
        \end{sc}
    \end{small}
        
    \end{center}
    \caption{\label{tab:convex} Comparison of different algorithms for distributed stochastic optimization under smooth strongly convex objectives. 
    The notation $\tilde{\cO}(\cdot)$ hides all the polylogarithmic factors inheriting from \cite{song2022communication, koloskova2020unified}.
    }
    \vskip -0.1in
\end{table*}
%%%%%%%%%%%%%%%%%%%%%%%%%%%%%%%%%%%%%%%%%%%%%%%%%%%%%

\subsection{Main Contribution}

This paper introduces a novel distributed stochastic gradient algorithm, termed ``B-ary Tree Push-Pull'' (BTPP), which is provably efficient for solving the distributed learning problem \eqref{eq:obj} under arbitrary network size.  
The main contribution is summarized as follows:
\begin{itemize}
    \item BTPP incurs a $\Theta(1)$ communication overhead per-iteration for each agent. Specifically, any agent in the network communicates with at most $(B+1)$ neighbors, where $B$ can be freely chosen to fit different settings. Generally speaking, larger $B$ increases the per-iteration communication cost but reduces the transient time at the same time.
    \item We show BTPP enjoys an $\tilde{\cO}(n)$ transient time or iteration complexity under smooth nonconvex objectives and an $\tilde{\cO}(1)$ transient time or iteration complexity under smooth strongly convex objectives. Such results outperform the baselines: see \cref{tab:topo} and \cref{tab:convex}. The improvement is significant since the transient time greatly impacts the algorithmic performance, especially under large $n$.
    \item The convergence analysis for BTPP is non-trivial, partly due to the fact that the algorithm admits two different network topologies for communicating the model parameters and the (stochastic) gradient trackers respectively. Instead of constructing the induced matrix norms $\|\cdot\|_{\cR}$ and $\|\cdot\|_{\cC}$ as in \cite{pu2020push}, the analysis is performed under $\norm{\cdot}_2$ and $\norm{\cdot}_F$ only by carefully treating the matrix products and related terms. 
\end{itemize}

\subsection{Notation and Preliminaries}

Throughout the paper, vectors default to columns if not otherwise specified. Let each agent $i$ hold a local copy $x_i \in \reals^p$ of the decision variable and an auxiliary variable $y_i\in \reals^p$. Their values at iteration $k$ are denoted by $x_i^{(k)}$ and $y_i^{(k)}$, respectively. We let 
$\bX = \brk{x_1, x_2, \cdots,x_n}^{\top} \in \reals^{n\times p}$, $\bY= \brk{y_1, y_2, \cdots,y_n}^{\T} \in \reals^{n\times p}$,
and $\mone$ denotes the column vector with all entries equal to $1$. We also define the aggregated gradients at the local variables as 
$\nabla F(\bX) :=  \brk{\nabla f_1 (x_1),\nabla f_2 (x_2),  \cdots, \nabla f_n(x_n) }^{\T} \in \reals^{n\times p}$,
 where $F(\bX) := \sum_{i=1}^n f_i(x_i)$.
In addition, denote 
$\bxi := \brk{\xi_1, \xi_2, \cdots, \xi_n}^{\T}$, $\bG(\bX,\bxi):= \brk{g_1(x_1,\xi_1), g_2(x_2,\xi_2), \cdots, g_n(x_n,\xi_n)}^{\T} \in \reals^{n\times p}$.
For the conciseness of presentation, we also use $\bG^{(t)}$ to represent $\bG(\bX^{(t)},\bxi^{(t)})$. The term $\langle a, b\rangle$ stands for the inner product of two vectors $a,b\in \reals^p$. For matrices, $\norm{\cdot}_2$ and $\norm{\cdot}_F$ represent the spectral norm and the Frobenius norm respectively, which degenerate to the Euclidean norm for vectors. For simplicity, any square matrix with power $0$ is the unit matrix $\bI$ with the same dimension if not otherwise specified.

 We assume each agent $i$ is able to obtain noisy gradient samples of the form $g_i(x,\xi_i)$ that satisfies the following assumption.
    \begin{assumption}
        \label{a.var}
        For all $i \in \cN$ and $x\in\reals^p$, each random vector $\xi_i$ is independent and 
   $$
   \begin{aligned}
       & \expect_{\xi_i\sim\cD_i} \brk{ g_i(x,\xi_i)| x}  = \nabla f_i(x) ,\ \expect_{\xi_i \sim \cD_i} \brk{ \norm{g_i(x,\xi_i) - \nabla f_i(x)}^2 |x}  \le \sigma^2
   \end{aligned}
   $$
   for some $\sigma^2 > 0$.
    \end{assumption}

Regarding the individual objective functions $f_i$, we make the following standard assumption.
\begin{assumption}
    \label{a.smooth}
        Each $f_i(x) : \reals^p \rightarrow \reals$ is lower bounded with $L$-Lipschitz continuous gradients, i.e., for any $x, x' \in \reals^p$,
        $$
        \norm{ \nabla f_i(x) - \nabla f_i(x')} \le L\norm{x - x'}.
        $$
\end{assumption}
%%%%%%%%%%%%%%%%%%%%%%%%%%%%%%%%%%%%%%%%%%%%%%%%%%%%%%%%%%%%%%%%%%%%%%%%%%%%%%%%%%%%%%%%%%
We also consider the following standard assumption regarding strongly convexity.
\begin{assumption}
    \label{a.convex}
    For any $x,y\in \reals^p$,
    $$
    f(y) \ge f(x) + \left\langle\nabla f(x), y-x \right\rangle + \frac{\mu}{2}\left\| 
y-x \right\|^2.
    $$
\end{assumption}
Denote $f^*:= \min_{x \in \reals^p } f(x)$. Let $x^* = \arg\min_x f(x)$ if \cref{a.convex} holds.
%%%%%%%%%%%%%%%%%%%%%%%%%%%%%%%%%%%%%%%%%%%%%%%%%%%%%%%%%%%%%%%%%%%%%%%%%%%%%%%%%%%%%%%%%%%%%

A directed graph $\cG(\cN, \cE)$ consists of a set of $n$ nodes $\cN$ and a set of directed edges $\cE\subseteq \cN\times \cN$, where an edge $(j,i)\in \cE$ indicates that node $j$ can directly send information to node $i$. To facilitate the local averaging procedure, each graph can be associated with a non-negative weight matrix $\bW = \brk{w_{ij}} \in \reals^{n\times n}$, whose element $w_{ij}$ is non-zero only if $(j, i)\in \cE$. 
Similarly, a non-negative weight matrix $\bW$ corresponds to a directed graph denoted by $\cG_\bW$.
For a given graph $\cG_\bW$, the in-neighborhood and out-neighborhood of node $i\in \cN$ are given by $\cN_{\bW,i}^{in}:= \{j\in\cN: (j,i)\in\cE\}$ and $\cN_{\bW,i}^{out}:= \{j\in\cN: (i,j)\in\cE\}$, respectively. The degree of node $i$ is the number of its in-neighbors or out-neighbors. For example, in a one-peer graph, the degree of each node is at most $1$.

\subsection{Organization of the Paper}
The rest of this paper is organized as follows. In \cref{sec:algorithm}, we introduce the B-ary Tree Push-Pull algorithm and present its main convergence results. The sketch of analysis is presented in \cref{sec:analysis}, and numerical experiments are provided in \cref{sec:numerical}. We conclude the paper in \cref{sec:conclusion}.

\section{B-ary Tree Push-Pull Method}
\label{sec:algorithm}

\subsection{Communication Graphs}

The proposed B-ary Tree Push-Pull method makes use of two spanning trees as communication graphs: $\cG_{\cR}$ and $\cG_{\cC}$, which correspond to two mixing matrices $\cR$ and $\cC$, respectively. Specifically, we consider B-ary tree graphs with arbitrary number of nodes $n$ and depth $d$. The root node is labeled as $1$ for convenience, and we index the nodes layer-by-layer. The additional nodes are placed at the last layer if the tree is not full. 
% \cref{fig:graph1} and \cref{fig:graph2} illustrate the assignment of nodes when $B=2$.
\cref{fig:graph2} illustrates the assignment of 10 nodes when $B=2$.
In the Pull Tree $\cG_{\cR}$ (the left ones), each node has $1$ parent node and $B$ child nodes (except the ones in the last layer). The root node $1$ has no parent node.
In the Push Tree $\cG_{\cC}$ (the right ones), each node has  $1$ child node and $B$ parent nodes (except the ones in the last layer). 
It can be seen that the tree $\cG_{\cC}$ is identical to $\cG_{\cR}$ with all the edges reversing directions. Note that only node 1 has a self-loop.

\begin{figure}[ht]
\vskip 0.2in
\begin{center}
    % \includegraphics[width=0.75\textwidth]{./}
    % \caption{\label{fig:graph1}Two spanning trees with $16$ nodes when $B=2$. On the left is $\cG_{\cR}$, and the right one is $\cG_{\cC}$.}
    % \vskip 0.2in % Space between the two figures
    \includegraphics[width=0.75\textwidth]{./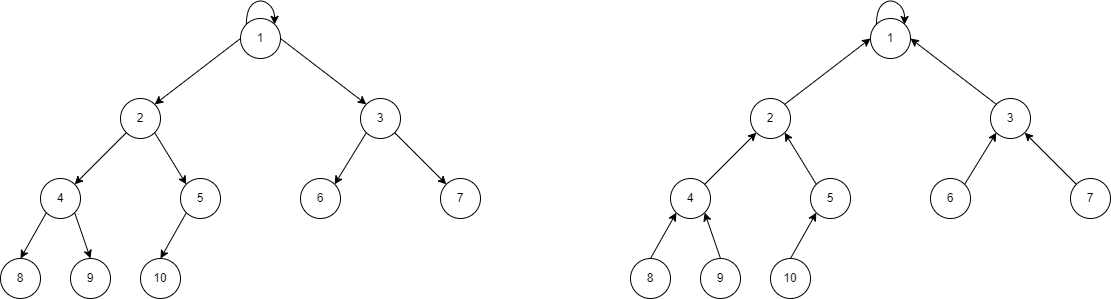}
    \caption{\label{fig:graph2}Two spanning trees with $10$ nodes when $B=2$. On the left is $\cG_{\cR}$, and the right one is $\cG_{\cC}$.}
    \label{fig:graph}
\end{center}
\vskip -0.2in
\end{figure}

\subsection{Algorithm}

We consider the following distributed stochastic gradient method (\cref{alg:pp}) for solving problem \eqref{eq:obj}. At every iteration $t$, each agent $i$ pulls the state information from its in-neighborhood $\cN_{\cR,i}^{in}$, pushes its (stochastic) gradient tracker $y_i$ to the out-neighborhood $\cN_{\cC,i}^{out}$, and updates its local variables $x_i$ and $y_i$ based on the received information. The agents aim to find the $\epsilon$-stationary point jointly by performing local computation and exchanging information through two spanning trees.

\begin{algorithm}[htbp]
    \caption{B-ary Tree Push-Pull Method (BTPP)}
    \label{alg:pp}
    \begin{algorithmic}[1]
		% \State Initialize $x_{i,0}^0$ for each agent $i\in\mathcal{N}$, stepsize $\gamma$.
        \State Each agent $i$ initializes with any arbitrary but identical $x_{i}^{(0)} = x^{(0)} \in \reals^p$, $y_i^{(0)} = g_i(x_i^{(0)}, \xi_i^{(0)}) \in \reals^p$ after drawing a random sample $\xi_i^{(0)}$, stepsize $\gamma>0$ and number of iterations $T$.
        \For{iteration $t = 0, 1, 2,\ldots, T-1$}
        \For{agent $i$ \textbf{in parallel}}
        \State Pull $x_j^{(t)} - \gamma y_{j}^{(t)}$ from each $j \in \cN_{\cR,i}^{in}$
        \State Push $y_i^{(t)}$ to each $j\in \cN_{\cC,i}^{out}$
        \State Independently draw a random sample $\xi_i^{(t+1)}$ 
        \State Update parameters through
        $$
        \begin{aligned}
            x_i^{(t+1)} & = \sum_{j \in \cN_{\cR,i}^{in}} \prt{x_j^{(t)} - \gamma y_{j}^{(t)}} \\
            y_i^{(t+1)} & = \sum_{j\in\cN_{\cC,i}^{in}} y_j^{(t)} + g_i(x^{(t+1)}_i;\xi_i^{(t+1)}) -  g_i(x^{(t)}_i;\xi_i^{(t)}) 
        \end{aligned}
        $$
        \EndFor
        \EndFor
        \State Output $x_{1}^{(T)}$.
    \end{algorithmic}
\end{algorithm}

 More specifically, in the pull tree $\cG_{\cR}$, each node $i$ pulls the updated model from its parent node along the tree. 
 Note that $\cN_{\cR,i}^{in}$ consists of only one node, the parent node.
 The Push Tree $\cG_{\cC}$ is the inverse of the Pull Tree, in which each node collects and aggregates the gradient trackers from its parent nodes. Due to the tree structure, only $y_1^{t}$ aggregates and tracks the stochastic gradients across the entire network, which will be made clear from the analysis.
 The implementation of the algorithm is rather simple. Taking node $2$ in \cref{fig:graph} as an example, we have $x_2^{(t+1)} = x_1^{(t)}-\gamma y_{1}^{(t)}$ and $y_2^{(t+1)} = y_4^{(t)} + y_5^{(t)} + g_2(x^{(t+1)}_2;\xi_2^{(t+1)}) - g_2(x^{(t)}_2;\xi_2^{(t)})$.
 
 We can write \cref{alg:pp} in the following compact form:
\begin{equation}
    \label{eq:ppSGD}
    \begin{aligned}
        \bX^{(t+1)} \ & = \cR \prt{\bX^{(t)} - \gamma \bY^{(t)}} \\
        \bY^{(t+1)} \ & = \cC \bY^{(t)} + \bG(\bX^{(t+1)},\bxi^{(t+1)}) - \bG(\bX^{(t)},\bxi^{(t)})
    \end{aligned}
\end{equation}
where $\bY^{(0)}=\bG(\bX^{(0)},\bxi^{(0)})$, and $\cR,\cC\in \reals^{n\times n}$ are non-negative matrices whose elements are given by
$$
\brk{\cR}_{i,j} = \left\{
    \begin{aligned}
        1 & \quad \text{if } i\in \left\{Bj+1 -B + [B] \right\}\cap [n]  \text{ or } i=j=1\\
        0 & \quad \text{otherwise}
    \end{aligned}
\right. 
$$
and $\cC = \cR^{\T}$ which corresponds to $\cG_{\cC}$, the inverse tree of $\cG_{\cR}$. It can be seen that $\cR$ is a row-stochastic matrix that only consists of $0$'s and $1$'s, and $\cC$ is column stochastic. For example, the mixing matrices corresponding to the graphs in \cref{fig:graph2} are given by

{\small$$
\cR = \prt{ \begin{matrix}
    1 &   &   &   &   &   &   &   &   &  \\
    1 &   &   &   &   &   &   &   &   &  \\
    1 &   &   &   &   &   &   &   &   &  \\
      & 1 &   &   &   &   &   &   &   &   \\
      & 1 &   &   &   &   &   &   &   &  \\
      &   & 1 &   &   &   &   &   &   &  \\
      &   & 1 &   &   &   &   &   &   &  \\
      &   &   & 1 &   &   &   &   &   &  \\
      &   &   & 1 &   &   &   &   &   &  \\
      &   &   &   & 1 &   &   &   &   &  \\
\end{matrix} }, \quad 
\cC = \prt{ \begin{matrix}
    1 & 1 & 1 &   &   &   &   &   &   &  \\
      &   &   & 1 & 1 &   &   &   &   &  \\
      &   &   &   &   & 1 & 1 &   &   &  \\
      &   &   &   &   &   &   & 1 & 1 &  \\
      &   &   &   &   &   &   &   &   &1 \\
      &   &   &   &   &   &   &   &   &  \\
      &   &   &   &   &   &   &   &   &  \\
      &   &   &   &   &   &   &   &   &  \\
      &   &   &   &   &   &   &   &   &  \\
      &   &   &   &   &   &   &   &   &  \\
\end{matrix} },
$$\normalsize}
where the unspecified elements are zeros.

\subsection{Main Results}
The main convergence properties of BTPP are summarized in the following two theorems, where the second result assumes strongly convexity on $f$.
\begin{theorem}
    \label{thm:convergence}
    For the BTPP algorithm outlined in \cref{alg:pp} implemented on B-ary tree graphs $\cG_{\cR}$ and $\cG_{\cC}$, assume \cref{a.var} and \cref{a.smooth} hold. Let $\gamma = \min\{\prt{\frac{\Delta_f}{3\sigma^2 L n (T+1)}}^{\frac{1}{2}}, \prt{\frac{\Delta_f}{1500n^2 d^6 \sigma^2 L^2 (T+1)}}^{\frac{1}{3}}, \frac{1}{100nd^3L}\}$. The following convergence result holds:
    % \begin{equation}
    %     \label{eq:convergence}
    %     \begin{aligned}
    %         & \frac{1}{T+1} \sum_{t=0}^{T}\expect\norm{\nabla f(x_1^{(t)})}_2^2 \\
    %         \le \quad & \frac{32 \sqrt{\Delta_f \sigma^2 L}}{\sqrt{n(T+1)}} + \frac{240d^2 \prt{\sigma^2 L^2 \Delta_f^2}^{\frac{1}{3}}}{\prt{\sqrt{n} (T+1)}^{\frac{2}{3}}   } + \frac{800d^3 L \Delta_f }{T+1}  + \frac{\norm{\nabla \bF(\bX^{(0)})}_F^2}{n(T+1)},
    %     \end{aligned}
    % \end{equation}
    \begin{equation}
        \label{eq:convergence}
        \begin{aligned}
            & \frac{1}{T+1} \sum_{t=0}^{T}\expect\norm{\nabla f(x_1^{(t)})}_2^2 
            \le \frac{32 \sqrt{\Delta_f \sigma^2 L}}{\sqrt{n(T+1)}} + \frac{240d^2 \prt{\sigma^2 L^2 \Delta_f^2}^{\frac{1}{3}}}{\prt{\sqrt{n} (T+1)}^{\frac{2}{3}}   } + \frac{800d^3 L \Delta_f }{T+1}  + \frac{\norm{\nabla \bF(\bX^{(0)})}_F^2}{n(T+1)},
        \end{aligned}
    \end{equation}
    where $\Delta_f := f(\bx_1^{(0)}) - f^*$ and $d = \lfloor\log_B(n)\rfloor$ represents the diameter of the graphs. 
\end{theorem}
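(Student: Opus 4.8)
The plan is to exploit the special finite-time consensus structure of the two tree matrices and reduce the analysis to an inexact, minibatch-type SGD recursion at the root node $1$. I would first record the linear-algebraic facts that drive everything: $\cR$ is row-stochastic with $\cR\mone=\mone$ and left Perron vector $e_1$ (the first standard basis vector), since the only self-loop is at node $1$ so that the first row of $\cR$ equals $e_1^\T$; dually $\cC=\cR^\T$ is column-stochastic with $\cC e_1 = e_1$ and $\mone^\T\cC=\mone^\T$. The crucial point, replacing the geometric mixing rate used in ordinary gossip analyses, is the \emph{exact} consensus after $d$ steps: one checks by induction that $(\cR-\mone e_1^\T)^k=\cR^k-\mone e_1^\T$ and $(\cC-e_1\mone^\T)^k=\cC^k-e_1\mone^\T$, and that $\cR^k=\mone e_1^\T$, $\cC^k=e_1\mone^\T$ for every $k\ge d$. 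Thus both error operators are nilpotent of index at most $d$, and all consensus/tracking errors are governed by a sliding window of only the last $d$ iterations. The $0/1$ structure also gives the crude but sufficient norm bounds $\norm{\cR}_2,\norm{\cC}_2\le\sqrt{B+1}$ from the bounded in/out degree, which I would use to control the intermediate matrix products.

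Two preliminary identities set up the descent. Summing the $\bY$-recursion in \eqref{eq:ppSGD} against $\mone^\T$ and using $\mone^\T\cC=\mone^\T$ with $\bY^{(0)}=\bG^{(0)}$ telescopes to the gradient-tracking identity $\mone^\T\bY^{(t)}=\mone^\T\bG^{(t)}=\sum_{i}g_i(x_i^{(t)},\xi_i^{(t)})$. Multiplying the $\bX$-recursion by $e_1^\T$ and using $e_1^\T\cR=e_1^\T$ collapses it to the scalar update $x_1^{(t+1)}=x_1^{(t)}-\gamma y_1^{(t)}$. I would then write $y_1^{(t)}=\mone^\T\bY^{(t)}-(\mone^\T-e_1^\T)\bY^{(t)}$ and substitute \cref{a.smooth} to replace $\sum_i\nabla f_i(x_i^{(t)})$ by $n\nabla f(x_1^{(t)})$, so that node $1$ performs an inexact SGD step with effective stepsize $n\gamma$, a minibatch mean of $n$ independent noises (variance $\sigma^2/n$, the source of the linear speedup), and two perturbations: the \emph{consensus error} $\bX^{(t)}-\mone x_1^{(t)}=(\bI-\mone e_1^\T)\bX^{(t)}$ and the \emph{tracking error} $(\bI-e_1\mone^\T)\bY^{(t)}$.

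I would then bound these two errors by unrolling their recursions against the nilpotent operators. For the consensus error, $(\bI-\mone e_1^\T)\bX^{(t+1)}=(\cR-\mone e_1^\T)(\bX^{(t)}-\gamma\bY^{(t)})$, which after at most $d$ unrollings expresses $\bX^{(t)}-\mone x_1^{(t)}$ as a $\gamma$-weighted, degree-controlled combination of $\bY$ over the previous $d$ steps; similarly $(\bI-e_1\mone^\T)\bY^{(t+1)}=(\cC-e_1\mone^\T)\bY^{(t)}+(\bI-e_1\mone^\T)(\bG^{(t+1)}-\bG^{(t)})$ bounds the tracking error by the gradient increments $\norm{\bG^{(t+1)}-\bG^{(t)}}_F$ over the last $d$ steps, which smoothness converts into $\norm{\bX^{(t+1)}-\bX^{(t)}}_F$ plus noise terms, and these in turn are $O(\gamma\norm{\bY}_F)$. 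Squaring, taking expectations, and summing over $t$ yields a coupled system of inequalities linking the running average of $\expect\norm{\nabla f(x_1^{(t)})}^2$, the accumulated consensus error, and the accumulated tracking error, where the length-$d$ windows produce the $d^2,d^3$ (and, in intermediate steps, $d^6$) factors. Combining the descent inequality with these bounds, absorbing the error terms for $\gamma$ below the stability threshold $\tfrac{1}{100nd^3L}$, telescoping, and finally optimizing the three competing terms in $\gamma$ gives exactly the three-way minimum in the stepsize and the claimed rate.

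The main obstacle, and the part I would spend the most care on, is the coupled error analysis under the two \emph{non-normal} matrices $\cR$ and $\cC$, whose spectral norms exceed $1$ and for which no single induced norm contracts. The nilpotency identities give finite-time consensus, but only in a sliding-window sense, so the difficulty is to sum the windowed contributions of $\gamma\bY$ and of the gradient increments without losing the correct powers of $d$ and, especially, without losing the $1/n$ variance reduction that produces linear speedup; this requires keeping the martingale (conditional-expectation) structure of the noise intact through the tracking recursion and tightly tracking how the bounded degree $B+1$ and the depth $d$ enter the matrix products. Solving the resulting coupled inequalities to extract the precise stepsize conditions and constants is where the bulk of the technical work lies.
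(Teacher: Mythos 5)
Your plan follows the paper's proof essentially step for step: the left Perron vector of $\cR$ is $n e_1$ so that node $1$ performs an inexact SGD step with effective stepsize $n\gamma$ (the paper's \cref{l.fbarX}); the tracking identity $\mone^\T\bY^{(t)}=\mone^\T\bG^{(t)}$ supplies the minibatch noise of variance $\sigma^2/n$; the nilpotency $(\cR-\mone e_1^\T)^k=\cR^k-\mone e_1^\T=\mathbf{0}$ for $k\ge d$ turns the consensus and tracking errors into length-$d$ sliding-window sums (\cref{l.X_diff}, \cref{l.PiX}); the layer-wise delay argument you flag as the delicate martingale step is exactly the paper's \cref{l.inner} (via \cref{l.Ai}, $(\tfrac{\bu^\T}{n}-\mone^\T)\bA_m$ is the indicator of layer $m+1$, whose samples at time $t-m$ have not yet reached node $1$ at time $t$); and the final three-way stepsize optimization is \cref{l.martin}.

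There is one concrete step that would fail as written: controlling the intermediate matrix products by $\norm{\cR}_2\le\sqrt{B+1}$ and submultiplicativity. That gives $\norm{(\cR-\mone e_1^\T)^k}_2\le (B+1)^{k/2}$, which at $k=d-1$ is of order $n^{\log_B(B+1)/2}$ — for $B=2$ about $n^{0.79}$ — polynomially larger than the uniform bound $\norm{\cR^k-\tfrac1n\mone\bu^\T}_2\le\sqrt{n}$ that the paper proves for all $k\le d-1$ (\cref{l.R0knorm}, via an explicit eigenvalue analysis of the truncated Gram matrix). Since these squared norms multiply the windowed consensus- and tracking-error sums in \cref{l.bXQ1}--\cref{l.bXQ0}, the excess factor would inflate the higher-order terms by a polynomial in $n$ and destroy both the stated constants and the $\tilde{\cO}(n)$ transient time. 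The repair is available from machinery you already commit to: the induction that establishes $\cR^k=\mone e_1^\T$ for $k\ge d$ produces the closed form of $\cR^k$ (each row has a single one, each nonzero column has at most $\tfrac{B^{k+1}-1}{B-1}\le 2n$ ones), whence $\norm{\cR^k}_2^2\le\norm{\cR^k}_1\norm{\cR^k}_\infty\le 2n$ uniformly in $k\le d-1$; you must use this $k$-uniform bound rather than iterating the one-step degree bound.
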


\begin{remark}
    Based on the convergence rate in \eqref{eq:convergence} of BTPP, we can derive that when $T = \Theta(n\log^{12}(n))$, the term $\cO(\frac{1}{\sqrt{nT}})$ dominates the remaining terms up to a constant scalar. This implies that BTPP achieves linear speedup after $\cO\prt{n\log^{12}(n)}$ transient iterations.
\end{remark}

\begin{remark}
    \label{rmk:branch}
    The convergence rate in \eqref{eq:convergence} is related to the branch size $B$. For larger $B$, the diameter $d = \lfloor \log_B(n)\rfloor$ becomes smaller, which results in more efficient transmission of information and fewer transient iterations. However, the per-iteration communication cost is relatively larger. When $B$ is smaller, the communication burden for each agent at every iteration is lighter, but the transient time is larger. Therefore, in practice, the communication cost and convergence rate can be balanced by considering a proper $B$.
\end{remark}
%%%%%%%%%%%%%%%%%%%%%%%%%%%%%%%%%%%%%%%%%%%%%%%%%%%%%%%%%%%%%%%%%%%%%%%%%%%%%%%%%%%%%%%%%%%%%%%%%
\begin{theorem}
    \label{thm:convergence-convex}
  For the BTPP algorithm outlined in \cref{alg:pp} implemented on B-ary tree graphs $\cG_{\cR}$ and $\cG_{\cC}$, assume \cref{a.var}, \cref{a.smooth} and \cref{a.convex} hold. Let $\gamma = \min\crk{\frac{1}{100nd^2\kappa L}, \frac{16\log(n\prt{T+1}^2)}{n\prt{T+1}\mu}}$ and $T\ge 2d$. The following convergence result holds:
    \begin{equation}
    \label{eq:convergence-convex}
        \begin{aligned}
    &\mathbb{E}\left\|x_1^{(T)} - x^* \right\|^2 \le  \frac{2240\sigma^2 \log(n(T+1)^2)}{n\prt{T+1}\mu^2} + \frac{26880000 d^6 \kappa^2 \sigma^2\prt{\log(n(T+1)^2)}^2}{n\prt{T+1}^2\mu^2} \\
    & \quad + \max\crk{\exp(-\frac{T}{800d^2\kappa^2}), \frac{40}{n\prt{T+1}^2}}\prt{\left\|x_1^{(0)} - x^* \right\|^2 + \frac{1}{nL^2}\norm{\nabla\bF(\bX^{(0)})}_F^2  }.
\end{aligned}
    \end{equation}
\end{theorem}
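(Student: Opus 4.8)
The plan is to reduce the whole dynamics to a perturbed stochastic gradient descent on the root iterate $x_1^{(t)}$ and to control the resulting error terms through the nilpotent structure of the two tree matrices. First I would record the structural facts that drive everything. Since the root carries a self-loop and information flows strictly from parents to children, $\cR$ admits $\mone$ as a right and $e_1$ as a left Perron vector, while $\cC=\cR^{\T}$ admits $e_1$ as a right and $\mone$ as a left Perron vector; hence $e_1^{\T}\cR=e_1^{\T}$, $\mone^{\T}\cC=\mone^{\T}$, and the matrices $M:=\cR-\mone e_1^{\T}$ and $N:=\cC-e_1\mone^{\T}=M^{\T}$ satisfy $M^{d}=N^{d}=0$. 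Two consequences follow: the exact tracking identity $\mone^{\T}\bY^{(t)}=\mone^{\T}\bG^{(t)}=\sum_i g_i(x_i^{(t)},\xi_i^{(t)})$ (by column-stochasticity of $\cC$ and $\bY^{(0)}=\bG^{(0)}$), and the scalar root recursion $x_1^{(t+1)}=x_1^{(t)}-\gamma y_1^{(t)}$ with $y_1^{(t)}=\mone^{\T}\bY^{(t)}+e_1^{\T}\bY_{\perp}^{(t)}$, where $\bY_{\perp}^{(t)}:=(\bI-e_1\mone^{\T})\bY^{(t)}$. Thus the root behaves like SGD on $f$ with effective stepsize $\gamma n$ (because $y_1^{(t)}\approx\sum_i\nabla f_i(x_1^{(t)})=n\nabla f(x_1^{(t)})$), noise variance of order $n\sigma^2$, and two bias sources: the consensus error $\bX_{\perp}^{(t)}:=(\bI-\mone e_1^{\T})\bX^{(t)}$ and the tracking error $\bY_{\perp}^{(t)}$.

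Next I would derive a one-step descent inequality. Conditioning on the history, writing $\sum_i\nabla f_i(x_i^{(t)})=n\nabla f(x_1^{(t)})+\sum_i\prt{\nabla f_i(x_i^{(t)})-\nabla f_i(x_1^{(t)})}$ and bounding the latter by $L\norm{\bX_{\perp}^{(t)}}_F$ via \cref{a.smooth}, the effective step $\gamma n\le 1/L$ together with $\mu$-strong convexity (\cref{a.convex}) and \cref{a.var} yields
$$\expect\norm{x_1^{(t+1)}-x^*}^2\le(1-\gamma n\mu)\expect\norm{x_1^{(t)}-x^*}^2+\frac{c_1\gamma}{n\mu}\prt{L^2\expect\norm{\bX_{\perp}^{(t)}}_F^2+\expect\norm{e_1^{\T}\bY_{\perp}^{(t)}}^2}+c_2\gamma^2 n\sigma^2$$
for absolute constants $c_1,c_2$. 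It then remains to bound the accumulated consensus and tracking errors.

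Here the nilpotency replaces the usual geometric mixing. Unrolling $\bX_{\perp}^{(t+1)}=M\prt{\bX_{\perp}^{(t)}-\gamma\bY^{(t)}}$ for $d$ steps annihilates the memory term, giving $\bX_{\perp}^{(t+1)}=-\gamma\sum_{k=1}^{d}M^{k}\bY^{(t+1-k)}$, so $\norm{\bX_{\perp}^{(t+1)}}_F^2\lesssim\gamma^2 d\sum_{k=1}^{d}\norm{M^{k}}_2^2\norm{\bY^{(t+1-k)}}_F^2$; similarly $\bY_{\perp}^{(t+1)}=\sum_{k=0}^{d-1}N^{k}(\bI-e_1\mone^{\T})\prt{\bG^{(t+1-k)}-\bG^{(t-k)}}$, and bounding each gradient difference by $2L^2\norm{\bX^{(s+1)}-\bX^{(s)}}_F^2$ plus a noise difference (variance $\lesssim n\sigma^2$) expresses the tracking error through $\gamma\norm{\bY}_F$ increments over the window. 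I would bound the powers $\norm{M^{k}}_2,\norm{N^{k}}_2$ explicitly (they are $0/1$-structured and grow at most like $\sqrt{n}$, with $M^{d}=0$) and close the loop via $\norm{\bY^{(t)}}_F^2\lesssim\norm{\bY_{\perp}^{(t)}}_F^2+\norm{\sum_i g_i^{(t)}}^2$, where $\norm{\sum_i g_i^{(t)}}^2\lesssim n^2\norm{\nabla f(x_1^{(t)})}^2+nL^2\norm{\bX_{\perp}^{(t)}}_F^2+n\sigma^2$ and $\norm{\nabla f(x_1^{(t)})}^2\le L^2\norm{x_1^{(t)}-x^*}^2$.

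Finally I would combine the three estimates into a single contracting potential. Because the consensus and tracking bounds are windowed over the last $d$ iterations rather than one step, I would form a Lyapunov function $V^{(t)}=\expect\norm{x_1^{(t)}-x^*}^2+(\text{weighted sums of recent }\norm{\bX_{\perp}}_F^2,\norm{\bY_{\perp}}_F^2)$, or equivalently sum the descent inequality against the weights $(1-\gamma n\mu)^{-t}$ and use $\gamma n\mu\, d\ll1$ so the window shift costs only constants. The smallness $\gamma\le 1/(100nd^2\kappa L)$ is exactly what forces the $n$- and $d$-dependent coupling coefficients (from $\norm{M^k}_2^2,\norm{N^k}_2^2\lesssim n$ and the $n$ in the effective step) to be strictly dominated, giving $V^{(t+1)}\le(1-\gamma n\mu)V^{(t)}+\cO(\gamma^2 n\sigma^2)$ up to $\mathrm{poly}(d)$ factors. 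Unrolling to time $T$ and substituting $\gamma=\min\crk{\frac{1}{100nd^2\kappa L},\frac{16\log(n(T+1)^2)}{n(T+1)\mu}}$, using $(1-a)^T\le e^{-aT}$ and choosing the logarithmic factor to balance the noise ball against the initial condition, produces the two regimes of \eqref{eq:convergence-convex}: the $\exp(-T/(800d^2\kappa^2))$ term when the first branch of $\gamma$ is active and the $1/(n(T+1)^2)$ term when the second is, together with the noise terms of order $\sigma^2\log/(n(T+1)\mu^2)$ and $d^6\kappa^2\sigma^2\log^2/(n(T+1)^2\mu^2)$. The main obstacle is the absence of per-step spectral contraction: unlike gossip, $\norm{M}_2,\norm{N}_2$ can exceed $1$ and $\norm{M^k}_2$ can be as large as $\sqrt{n}$, so the only usable smallness is the exact annihilation $M^d=N^d=0$. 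Converting the $d$-step windowed recursions into a genuine single contraction, while tracking the $n$- and $d$-dependent constants tightly enough that the transient time collapses to $\tilde\cO(1)$, is the delicate core of the argument, and is precisely where one must work with $\norm{\cdot}_2$ and $\norm{\cdot}_F$ directly rather than with the $\cR$- and $\cC$-induced norms of the original Push-Pull method.
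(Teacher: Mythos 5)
Your overall architecture --- the nilpotency $\prt{\cR-\frac{1}{n}\mone\bu^{\T}}^{d}=0$ with $\norm{\prt{\cR-\frac{1}{n}\mone\bu^{\T}}^{k}}_2\le\sqrt{n}$, the exact tracking identity $\mone^{\T}\bY^{(t)}=\mone^{\T}\bG^{(t)}$, the $d$-step windowed recursions for the consensus and tracking errors, and the summation against the weights $(1-n\gamma\mu/4)^{T-t}$ --- is essentially the paper's (\cref{l.X_diff,l.PiX,l.x-star}). But there is one genuine gap, and it is fatal to the rate: your one-step inequality handles the cross term $-2\gamma\,\expect\langle x_1^{(t)}-x^{*},\, e_1^{\T}\bY_{\perp}^{(t)}\rangle$ by Young's inequality, leaving the term $\frac{c_1\gamma}{n\mu}\expect\norm{e_1^{\T}\bY_{\perp}^{(t)}}^2$. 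By your own accounting, $e_1^{\T}\bY_{\perp}^{(t)}=\prt{\frac{\bu^{\T}}{n}-\mone^{\T}}\bPi_{\bv}\bY^{(t)}=\sum_{m=1}^{\min\{t,d\}}\prt{\frac{\bu^{\T}}{n}-\mone^{\T}}\bA_m\bG^{(t-m)}+\prt{\frac{\bu^{\T}}{n}-\mone^{\T}}\bG^{(t)}$ aggregates the raw delayed stochastic gradients of every layer, so $\expect\norm{e_1^{\T}\bY_{\perp}^{(t)}}^2=\Theta(n\sigma^2)$ plus gradient terms (cf.\ \cref{co:Ai}). The resulting per-step contribution $\Theta(\gamma\sigma^2/\mu)$, summed against the geometric weights, yields $\Theta\prt{\frac{\gamma\sigma^2}{\mu}\cdot\frac{1}{n\gamma\mu}}=\Theta\prt{\frac{\sigma^2}{n\mu^2}}$ --- a $T$-independent error floor. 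No choice of the Young parameter avoids this: to absorb the $\norm{x_1^{(t)}-x^{*}}^2$ side into the $(1-n\gamma\mu/4)$ contraction you must pay a factor $\gamma/(n\mu)$ on the other side, and conditioning on $\cF_t$ does not help because the samples $\xi^{(t-1)},\dots,\xi^{(t-d)}$ appearing in $\bY_{\perp}^{(t)}$ are $\cF_t$-measurable.

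The paper escapes this through \cref{l.inner}: since information from layer $m+1$ needs $m$ further iterations to reach node $1$, the iterate $x_1^{(t)}$ is independent of the samples $\xi_i^{(t-m)}$ for $i$ in layer $m+1$, and $\prt{\frac{\bu^{\T}}{n}-\mone^{\T}}\bA_m$ is exactly the indicator of that layer; hence the cross term between $x_1^{(t)}-x^{*}$ and the noise part of the tracker vanishes in expectation, and the noise enters only through the quadratic term $\gamma^2\norm{y_1^{(t)}}^2$, i.e.\ at order $\gamma^2 n\sigma^2$ per step, which is what produces the leading term $\cO\prt{\frac{\sigma^2\log(n(T+1)^2)}{n(T+1)\mu^2}}$. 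You need to replace the Young step on the tracking-noise cross term with this delayed-independence (martingale-type) cancellation, applying Cauchy--Schwarz only to the deterministic gradient-difference part of $\bPi_{\bv}\bY^{(t)}$. With that repair, the rest of your plan goes through essentially as in the paper's proof of \cref{l.x-star}.
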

\begin{remark}
    The convergence rate in \eqref{eq:convergence-convex} implies that
$
      \mathbb{E}\left\|x_1^{(T)} - x^* \right\|^2 \le \tilde{\mathcal{O}} \left( \frac{1}{nT} + \frac{1}{nT^2} + \exp\left( - T\right) \right),
$
where $\tilde{\mathcal{O}}$ hides the constants and polylogarithmic factors. The transient time is thus $\tilde{O}(1)$, i.e., the number of iterations before the term $\cO(\frac{1}{nT})$ dominates the remaining terms. Such a transient time also outperforms the state-of-the-art results.
\end{remark}
%%%%%%%%%%%%%%%%%%%%%%%%%%%%%%%%%%%%%%%%%%%%%%%%%%%%%%%%%%%%%%%%%%%%%%%%%%%%%%%%%%%%%%%%%%%%%%%%%

 \section{Analysis of B-ary Tree Push-Pull}
\label{sec:analysis}
In this section, we study the convergence of BTPP and prove \cref{thm:convergence} by analyzing the properties of the weight matrices $\cR$ and $\cC$, the evolution of the aggregated consensus error $\sum_{t=0}^{T}\expect \norm{\bPi_{\bu}\bX^{(t)}}_F^2$, and the expected inner products of the stochastic gradients between different layers. The approach is different from those employed in \cite{pu2020push,pu2021distributed,song2022communication}, where the analysis considers two special matrix norms related to $\mathcal{R}$ and $\mathcal{C}$, respectively. Such a distinction is because BTPP works with two B-ary trees and iterates in a layer-wise manner, while most other works consider connected graphs.

Our analysis starts with characterizing the weight matrices $\cR$ and $\cC$, as delineated in the following lemmas. It is important to note that for any given $n$ and a specific integer $B$, we can determine an integer $d$  satisfying $\frac{B^d - 1}{B-1}< n \le \frac{B^{d+1} - 1}{B-1}$ which is the diameter of the graphs. 

    Notice that $\mathcal{R}$ has a unique non-negative left eigenvector $\bu^{\T}$ (w.r.t. eigenvalue 1) with $\bu^{\T}\mone = n$. More specifically, $\bu = \brk{n,0,\cdots, 0}^{\T}$, which is also the unique right eigenvector of $\cC$ (w.r.t. eigenvalue 1), denoted by $\bv$ for the clarity of presentation.
Following the above observations, it is revealed in \cref{l.R0knorm} that the $2$-norm of the matrix $\cR - \frac{1}{n}\mone\bu^{\T}$ with exponent $k$ remains bounded by $\sqrt{n}$ and equals zero when $k$ exceeds $d-1$.
\begin{lemma}
    \label{l.R0knorm}
    Given a positive integer $k$, the 2-norm of the matrix $\cR^k - \frac{1}{n}\mone \bu^{\T}$ satisfies
    $$
    \|\cR^k- \frac{1}{n}\mone \bu^{\T}\|_2  \left\{\begin{aligned}
        \le \sqrt{n}& \quad k \le d-1 \\
         = 0 & \quad k\ge d
    \end{aligned}\right.
    $$
\end{lemma}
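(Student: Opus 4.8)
The plan is to read off the combinatorial meaning of the two matrices and then control the spectral norm of their difference by a rank-one-plus-block-diagonal decomposition, avoiding the lossy Frobenius bound. Write $J := \frac{1}{n}\mone\bu^{\T}$; since $\bu = \brk{n,0,\dots,0}^{\T}$ we have $J = \mone\bfe_1^{\T}$, the broadcast operator $(J\bx)_i = x_1$, while $\cR$ is the parent-pullback operator $(\cR\bx)_i = x_{\mathrm{par}(i)}$ with $\mathrm{par}(1)=1$. Iterating, $(\cR^k\bx)_i = x_{a_k(i)}$, where $a_k(i)$ is the ancestor of $i$ that is $\min(k,\ell_i)$ levels above it, $\ell_i$ being the depth of node $i$; once a walk reaches the root it remains there because of the self-loop. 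With this dictionary the two cases become transparent.

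First I would dispose of $k\ge d$. Because the deepest layer has depth $d$, every node reaches the root within $d\le k$ parent steps, so $a_k(i)=1$ for all $i$ and $\cR^k = \mone\bfe_1^{\T} = J$; hence $\cR^k-J$ is the zero matrix, giving the claimed equality. (Equivalently, $\cR^d = J$ and $\cR J = J$, so $\cR^k = \cR^{k-d}\cR^d = J$ for every $k\ge d$.) For $k\le d-1$ I would describe $M := \cR^k - J$ row by row: a node $i$ with $\ell_i\le k$ has $a_k(i)=1$, so its row is $\bfe_1^{\T}-\bfe_1^{\T}=0$; a node $i$ with $\ell_i>k$ has $a_k(i)\neq 1$, so its row is $\bfe_{a_k(i)}^{\T}-\bfe_1^{\T}$, a vector with exactly one $+1$ and one $-1$.

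The crux is to bound $\norm{M}_2^2 = \lambda_{\max}(MM^{\T})$ rather than $\norm{M}_F$. Deleting the zero rows, the surviving Gram matrix over the $N := n-(1+B+\dots+B^k)$ deep nodes has entries $\langle \bfe_{a_k(i)}-\bfe_1,\ \bfe_{a_k(i')}-\bfe_1\rangle$, which equal $2$ when $a_k(i)=a_k(i')$ and $1$ otherwise; that is, $MM^{\T}$ (off the zero block) equals $\mone\mone^{\T}+P$, where $\mone$ is the all-ones vector of dimension $N$ and $P$ is the block-diagonal $0/1$ matrix grouping deep nodes by their common $k$-th ancestor. Each block of $P$ has size $m_j\le B^k$, the number of depth-$k$ descendants of a fixed node.

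The final step combines Weyl's inequality with a counting identity. Both $\mone\mone^{\T}$ and $P$ are positive semidefinite with $\lambda_{\max}(\mone\mone^{\T}) = N$ and $\lambda_{\max}(P) = \max_j m_j \le B^k$, so Weyl yields $\lambda_{\max}(MM^{\T}) \le N + B^k = n-(1+B+\dots+B^{k-1}) \le n$, whence $\norm{\cR^k-J}_2 \le \sqrt{n}$; here I use that layers of depth $\le k$ are full (valid since $k\le d-1$) to evaluate $N$ exactly. I expect the sharp constant $\sqrt{n}$ to be the only real obstacle: the naive estimates — $\norm{M}_F = \sqrt{2N}$, or splitting $M$ into its $\cR^k$-part and its $\mone\bfe_1^{\T}$-part and applying the triangle inequality $\norm{\cdot}_2$ — overshoot to $\sqrt{2n}$ or worse, because they add standard deviations rather than variances. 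It is precisely the addition of the two \emph{eigenvalue} contributions of the PSD pieces, together with the combinatorial cancellation $N+B^k \le n$, that delivers the exact bound.
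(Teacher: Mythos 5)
Your proof is correct, and it takes a genuinely different route from the paper's. The paper works on the column (ancestor) side: it expands the quadratic form $\norm{\prt{\cR^k-\frac{1}{n}\mone\bu^{\T}}\bx}_2^2$ as $\sum_j c_j (x_j-x_1)^2$, where $c_j$ counts the depth-$k$ descendants of ancestor $j$, collapses this to an $(n_0+2)\times(n_0+2)$ arrowhead matrix $\Sigma$, and proves $\lambda_{\max}(\Sigma)\le n$ by a contradiction argument on the secular equation $\lambda = n-\frac{B^{k+1}-1}{B-1}+\sum_i\frac{B^{2k}}{\lambda-B^k}+\cdots$. You instead work on the row (descendant) side: after deleting the zero rows, the Gram matrix $MM^{\T}$ is exactly $\mone\mone^{\T}+P$ with $P$ block-diagonal consisting of all-ones blocks of size at most $B^k$, and the bound follows from eigenvalue subadditivity for symmetric matrices plus the counting identity $N+B^k=n-\frac{B^k-1}{B-1}\le n$. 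The two computations are dual ($MM^{\T}$ and $M^{\T}M$ share their nonzero spectrum), but your decomposition replaces the paper's secular-equation case analysis with a one-line Weyl step and makes the source of the cancellation (the $\frac{B^{k+1}-1}{B-1}$ shallow nodes contributing zero rows, offset by the block size $B^k$) explicit; it is arguably the cleaner argument. Your handling of $k\ge d$ (every node reaches the root within $d$ parent steps, so $\cR^k=\mone\bfe_1^{\T}$) matches the paper's Corollary \ref{l.R0}, and your use of the fullness of layers $0,\dots,k$ for $k\le d-1$ to evaluate $N$ is exactly the fact guaranteed by $\frac{B^d-1}{B-1}<n$.
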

% \begin{proof}
%     See \cref{prop.matrix}.
% \end{proof}
Similar result applies to the matrix $\cC^k - \frac{1}{n}\bv\mone^{\T}$. Consequently, we introduce the mixing matrices $\bPi_{\bu},\bPi_{\bv}$ based on the eigenvectors $\bu,\bv$, which play a crucial role in the follow-up analysis.
$$
\bPi_{\bu} := \bI - \frac{1}{n}\mone \bu^{\T}, \ \bPi_{\bv}:=\bI - \frac{1}{n}\bv\mone^{\T}.
$$
The following lemmas delineate the critical elements for constraining the average expected norms of the objective function as formulated in \eqref{eq:obj}, i.e., $\frac{1}{T+1} \sum_{t=0}^{T}\expect\norm{\nabla f(x_1^{(t)})}_2^2$.  \cref{l.X_diff} and \cref{l.PiX} provide bounds on the expressions
$\sum_{t=0}^{T}\expect\|\bar{\bX}^{(t+1)} - \Bar{\bX}^{(t)}\|_F^2$ and $\sum_{t=0}^{T} \norm{\bPi_{\bu}\bX^{(t)}}_F^2$, where $\bar{\bX}^{(t)} := \frac{1}{n}\mone\bu^\T \bX^{(t)}$.
\begin{lemma}
    \label{l.X_diff}
    Suppose \cref{a.var} holds and $\gamma\le \frac{1}{10ndL}$, we have the following inequality:
    $$
    \begin{aligned}
        & \sum_{t=0}^{T}\expect\|\bar{\bX}^{(t+1)} - \Bar{\bX}^{(t)}\|_F^2 \le  6\gamma^2 n^2\sigma^2(T+1) + 50\gamma^2n^2d^2L^2 \sum_{t=0}^{T}\expect \norm{\bPi_{\bu}\bX^{(t)}}_F^2 \\
        &\qquad + 6\gamma^2 n^2 d \norm{\nabla\bF(\bX^{(0)})}_F^2  + 15\gamma^2 n^3\sum_{t=0}^{T}\expect\norm{\nabla f(x_1^{(t)})}_2^2.
    \end{aligned}
    $$
\end{lemma}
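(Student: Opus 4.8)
The plan is to reduce the entire statement to a bound on $\sum_{t=0}^{T}\expect\norm{y_1^{(t)}}_2^2$, where $y_1^{(t)}$ is the gradient tracker at the root. Since $\frac{1}{n}\mone\bu^{\T} = \mone\mathbf{e}_1^{\T}$, the row-averaged iterate $\bar{\bX}^{(t)}=\frac1n\mone\bu^\T\bX^{(t)}$ has every row equal to $(x_1^{(t)})^{\T}$. Using $\bu^{\T}\cR = \bu^{\T}$ in the update $\bX^{(t+1)}=\cR(\bX^{(t)}-\gamma\bY^{(t)})$, I would first show $\bar{\bX}^{(t+1)}-\bar{\bX}^{(t)} = -\frac{\gamma}{n}\mone\bu^{\T}\bY^{(t)} = -\gamma\mone (y_1^{(t)})^{\T}$, so that $\norm{\bar{\bX}^{(t+1)}-\bar{\bX}^{(t)}}_F^2 = \gamma^2 n\norm{y_1^{(t)}}_2^2$. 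Dividing the target by $\gamma^2 n$, it then suffices to control $\sum_t\expect\norm{y_1^{(t)}}_2^2=\frac{1}{n^2}\sum_t\expect\norm{\bu^\T\bY^{(t)}}_2^2$.

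Next I would derive an explicit formula for $\bu^{\T}\bY^{(t)}$. Unrolling $\bY^{(t+1)}=\cC\bY^{(t)}+\Delta\bG^{(t+1)}$ (with $\Delta\bG^{(s)}:=\bG^{(s)}-\bG^{(s-1)}$ and $\bY^{(0)}=\bG^{(0)}$) gives $\bu^{\T}\bY^{(t)}=\bu^{\T}\cC^{t}\bG^{(0)}+\sum_{s=1}^{t}\bu^{\T}\cC^{t-s}\Delta\bG^{(s)}$. Writing $\bu^{\T}\cC^{k}=n\mone^{\T}+r_k$ with $r_k:=\bu^{\T}(\cC^k-\frac1n\bv\mone^\T)$, the analogue of \cref{l.R0knorm} for $\cC$ yields $r_k=0$ for $k\ge d$ and $\norm{r_k}_2\le\norm{\bu}_2\sqrt{n}=n^{3/2}$ for $k<d$. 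A telescoping (Abel-summation) argument using $\mone^{\T}\Delta\bG^{(s)}$ then collapses all terms with $k\ge d$ and leaves the clean identity $\bu^{\T}\bY^{(t)} = n\mone^{\T}\bG^{(t)} + r_t\bG^{(0)} + \sum_{s=t-d+1}^{t} r_{t-s}\Delta\bG^{(s)}$, i.e. the root tracker equals the full current gradient sum plus two corrections that are supported on only the last $d$ steps (the $r_t\bG^{(0)}$ piece being active only for $t<d$).

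For the dominant term $\mone^{\T}\bG^{(t)}=\sum_i g_i(x_i^{(t)};\xi_i^{(t)})$ I would split each summand into its deterministic part and its zero-mean noise. The noise part has $\expect\norm{\sum_i(g_i-\nabla f_i(x_i^{(t)}))}^2\le n\sigma^2$ by independence and \cref{a.var}, producing the $6\gamma^2 n^2\sigma^2(T+1)$ term after summing over $t$. The deterministic part I would compare to $n\nabla f(x_1^{(t)})$: by \cref{a.smooth}, $\norm{\sum_i(\nabla f_i(x_i^{(t)})-\nabla f_i(x_1^{(t)}))}\le L\sum_i\norm{x_i^{(t)}-x_1^{(t)}}\le L\sqrt{n}\,\norm{\bPi_{\bu}\bX^{(t)}}_F$, which after squaring yields the $\norm{\nabla f(x_1^{(t)})}_2^2$ and $\norm{\bPi_{\bu}\bX^{(t)}}_F^2$ contributions. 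The $r_t\bG^{(0)}$ correction is bounded by $\tfrac1n\norm{r_t}_2\norm{\bG^{(0)}}_F\le\sqrt n\,\norm{\bG^{(0)}}_F$ and, being nonzero only for the $d$ early iterates, contributes the $6\gamma^2 n^2 d\norm{\nabla\bF(\bX^{(0)})}_F^2$ term (the stochastic part of $\bG^{(0)}$ absorbing into the $\sigma^2$ term).

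The main obstacle will be the recent-difference correction $\tfrac1n\sum_{s=t-d+1}^{t}r_{t-s}\Delta\bG^{(s)}$. Bounding it by $\sqrt n\sum_{s=t-d+1}^t\norm{\Delta\bG^{(s)}}_F$ and applying Cauchy--Schwarz over the $d$ summands reduces matters to $\expect\norm{\Delta\bG^{(s)}}_F^2$, which I would split into two independent one-step noise pieces (each $\le n\sigma^2$) and a smoothness piece $L^2\expect\norm{\bX^{(s)}-\bX^{(s-1)}}_F^2$. The difficulty is that $\bX^{(s)}-\bX^{(s-1)}=(\cR-\bI)\bX^{(s-1)}-\gamma\cR\bY^{(s-1)}$ feeds $\bY$ — and hence $y_1$ and $\bPi_{\bu}\bX$ — back into the estimate, so the bound is genuinely self-referential. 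I would resolve this by using $(\cR-\bI)\frac1n\mone\bu^{\T}=0$ (since $\cR\mone=\mone$) to write $(\cR-\bI)\bX^{(s-1)}=(\cR-\bI)\bPi_{\bu}\bX^{(s-1)}$, thereby expressing the parameter increments purely through $\norm{\bPi_{\bu}\bX^{(s-1)}}_F$ and $\gamma\norm{\bY^{(s-1)}}_F$; the factor $d$ (from summing $d$ differences, each reused in at most $d$ of the outer sums) produces the $d^2$ in the coefficient $50\gamma^2 n^2 d^2 L^2$, while the stepsize restriction $\gamma\le\frac{1}{10ndL}$ is exactly what lets the residual $\gamma^2$-scaled self-referential terms be absorbed back into the $\norm{\bPi_{\bu}\bX}_F^2$, $\norm{\nabla f(x_1)}_2^2$, and $\sigma^2$ budgets rather than left dangling. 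Summing the resulting per-$t$ estimate and multiplying back by $\gamma^2 n$ should reproduce the stated four-term bound with the claimed constants.
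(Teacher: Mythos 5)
Your skeleton is essentially the paper's argument in disguise: the identity $\bar{\bX}^{(t+1)}-\bar{\bX}^{(t)}=-\gamma\mone (y_1^{(t)})^{\T}$, the unrolled formula $\frac{1}{n}\bu^{\T}\bY^{(t)}=\mone^{\T}\bG^{(t)}+\frac{1}{n}r_t\bG^{(0)}+\frac{1}{n}\sum_{s=t-d+1}^{t}r_{t-s}\Delta\bG^{(s)}$, the split of $\mone^{\T}\bG^{(t)}$ into noise, a smoothness deviation controlled by $\norm{\bPi_{\bu}\bX^{(t)}}_F$, and $n\nabla f(x_1^{(t)})$, and the absorption of the self-referential $\sum_t\expect\norm{\bar{\bX}^{(t+1)}-\bar{\bX}^{(t)}}_F^2$ via $\gamma\le\frac{1}{10ndL}$ all match the paper's proof (your $r_{t-s}\Delta\bG^{(s)}$ form and the paper's $\bA_m\bG^{(t-m)}$ form are related by summation by parts).

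However, there is a genuine quantitative gap in your treatment of the delayed-noise part of the correction $\frac{1}{n}\sum_{s=t-d+1}^{t}r_{t-s}\Delta\bG^{(s)}$. Bounding this by $\sqrt{n}\sum_s\norm{\Delta\bG^{(s)}}_F$ via $\norm{r_k}_2\le n^{3/2}$ and then applying Cauchy--Schwarz over the $d$ summands gives, for the stochastic part of $\Delta\bG^{(s)}$ alone, a contribution to $\expect\norm{\bar{\bX}^{(t+1)}-\bar{\bX}^{(t)}}_F^2$ of order $\gamma^2 n\cdot nd\cdot\sum_s\expect\norm{\bG^{(s)}-\nabla\bF(\bX^{(s)})-\bG^{(s-1)}+\nabla\bF(\bX^{(s-1)})}_F^2=\Theta(\gamma^2 n^3 d^2\sigma^2)$ per iteration, i.e.\ $\Theta(\gamma^2 n^3 d^2\sigma^2(T+1))$ after summing --- larger than the claimed $6\gamma^2 n^2\sigma^2(T+1)$ by a factor of $nd^2$, which would destroy the linear-speedup term downstream in \cref{l.fbarX}. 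The operator-norm bound discards the crucial structure that $r_k=-n\,\bfe_{J_k}^{\T}$ with $J_k$ the set of nodes at depth greater than $k$: resumming by parts turns the correction into $\sum_{m}\bfe_{\cI_{1,m}\setminus\cI_{1,m-1}}^{\T}\prt{\bG^{(t-m)}-\nabla\bF(\bX^{(t-m)})}$ plus deterministic terms, where the index sets are the \emph{disjoint} layers of the tree (this is exactly \cref{l.Ai}), so the $d$ delayed noise vectors are mutually orthogonal in expectation and their total second moment is $(n-1)\sigma^2$, not $n^2d^2\sigma^2$ (\cref{co:Ai} and the independence argument of \cref{l.bound_var}). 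You must keep the layer-indicator form for the noise and reserve the crude $\norm{r_k}_2$ bound for the deterministic $\nabla\bF$ differences only, which is precisely how the paper splits the estimate. A secondary, fixable looseness: routing the deterministic increments through $\gamma\norm{\cR\bY^{(s-1)}}_F$ reintroduces the full tracker norm, which is not directly controlled by the quantities in the lemma; the paper instead uses $\norm{\bX^{(s)}-\bX^{(s-1)}}_F\le\norm{\bPi_{\bu}\bX^{(s)}}_F+\norm{\bPi_{\bu}\bX^{(s-1)}}_F+\norm{\bar{\bX}^{(s)}-\bar{\bX}^{(s-1)}}_F$, which feeds cleanly into the self-referential absorption.
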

% \begin{proof}
%     See \cref{pf.l.X_diff}.
% \end{proof}

\begin{lemma}
    \label{l.PiX}
    Suppose \cref{a.var} holds and $\gamma\le \frac{1}{40nd^2 L}$, we have for $d \ge 2$ that
    $$
    \begin{aligned}
        & \sum_{t=0}^{T} \expect\norm{\bPi_{\bu}\bX^{(t)}}_F^2 
        \le  300 \gamma^2 n^2 d^4 (T+1) \sigma^2 + 20\gamma^2 n^3 d^2 \sum_{t=0}^{T}\expect  \| \nabla f(x_1^{(t)})\|_2^2 \\
        & \qquad \qquad + 6nd\norm{\bPi_{\bu}\bX^{(0)}}_F^2 + 40\gamma^2 n^2 d^3 \norm{\nabla \bF(\bX^{(0)})}_F^2.
    \end{aligned}
    $$
\end{lemma}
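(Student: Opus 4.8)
The plan is to derive a closed recursion for the parameter consensus error $\bPi_{\bu}\bX^{(t)}$, exploit the finite-step nilpotency of $\cR$ guaranteed by \cref{l.R0knorm}, and then bound the resulting driving term $\sum_t\expect\norm{\bY^{(t)}}_F^2$ through the gradient-tracking structure.

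First I would record the algebraic facts that make the recursion collapse. Since $\bu^\T\cR=\bu^\T$ and $\cR\mone=\mone$, writing $\cR_0:=\cR-\frac1n\mone\bu^\T$ one checks that $\bPi_{\bu}\cR=\cR\bPi_{\bu}=\cR_0$, $\cR_0\bPi_{\bu}=\cR_0$, and $\cR_0^k=\cR^k-\frac1n\mone\bu^\T$. Applying $\bPi_{\bu}$ to the $\bX$-update in \eqref{eq:ppSGD} then gives $\bPi_{\bu}\bX^{(t+1)}=\cR_0\bPi_{\bu}\bX^{(t)}-\gamma\cR_0\bY^{(t)}$, which unrolls to
\[
\bPi_{\bu}\bX^{(t)}=\cR_0^{t}\,\bPi_{\bu}\bX^{(0)}-\gamma\sum_{s=0}^{t-1}\cR_0^{\,t-s}\bY^{(s)}.
\]
The crucial point is that \cref{l.R0knorm} forces $\cR_0^{k}=0$ for $k\ge d$, so only the last $d-1$ gradient trackers survive. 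Bounding $\norm{\cR_0^k}_2\le\sqrt n$, applying Cauchy--Schwarz over this window of length $\le d-1$, squaring, and summing over $t$ (each $\bY^{(s)}$ is then counted at most $d-1$ times) yields an estimate of the shape
\[
\sum_{t=0}^{T}\expect\norm{\bPi_{\bu}\bX^{(t)}}_F^2 \;\lesssim\; nd\,\norm{\bPi_{\bu}\bX^{(0)}}_F^2+\gamma^2 n d^2\sum_{s=0}^{T}\expect\norm{\bY^{(s)}}_F^2.
\]

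The remaining work is to control $\sum_s\expect\norm{\bY^{(s)}}_F^2$. I would split $\bY^{(s)}=\frac1n\bv\mone^\T\bY^{(s)}+\bPi_{\bv}\bY^{(s)}$. The aggregate part places $\sum_i g_i(x_i^{(s)},\xi_i^{(s)})$ in row $1$, since column-stochasticity of $\cC$ preserves $\mone^\T\bY^{(s)}=\mone^\T\bG^{(s)}$; using \cref{a.var} for the noise and \cref{a.smooth} to compare $\nabla f_i(x_i^{(s)})$ with $\nabla f_i(x_1^{(s)})$ bounds it by $\cO(n\sigma^2)+\cO(nL^2)\norm{\bPi_{\bu}\bX^{(s)}}_F^2+\cO(n^2)\norm{\nabla f(x_1^{(s)})}_2^2$. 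The tracking-consensus part obeys its own nilpotent recursion $\bPi_{\bv}\bY^{(t+1)}=\cC_0\bPi_{\bv}\bY^{(t)}+\bPi_{\bv}(\bG^{(t+1)}-\bG^{(t)})$ with $\cC_0:=\cC-\frac1n\bv\mone^\T=\cR_0^\T$ and $\norm{\cC_0^k}_2=\norm{\cR_0^k}_2$; unrolling it over the same finite window reduces the task to estimating $\expect\norm{\bG^{(s)}-\bG^{(s-1)}}_F^2$, which by \cref{a.smooth} is controlled by $\expect\norm{\bX^{(s)}-\bX^{(s-1)}}_F^2$ plus a variance contribution, and the increment $\bX^{(s)}-\bX^{(s-1)}=(\cR-\bI)\bPi_{\bu}\bX^{(s-1)}-\gamma\cR\bY^{(s-1)}$ reintroduces $\norm{\bPi_{\bu}\bX}_F$ and $\gamma\norm{\bY}_F$.

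The main obstacle is therefore closing a self-referential system: the bound on $\sum\norm{\bPi_{\bu}\bX}_F^2$ is driven by $\sum\norm{\bY}_F^2$, while the bound on $\sum\norm{\bY}_F^2$ reproduces $\sum\norm{\bPi_{\bu}\bX}_F^2$ (through the aggregate gradient mismatch and the $\bG$-increments). The resolution is quantitative: every feedback coefficient either carries a factor $\gamma^2 n^2 d^{\,O(1)}$ or is premultiplied by $\gamma$, so under the stepsize hypothesis $\gamma\le\frac{1}{40nd^2L}$ these self-terms can be absorbed into the left-hand side, leaving only the advertised sources $\sigma^2(T+1)$, $\sum_t\expect\norm{\nabla f(x_1^{(t)})}_2^2$, $\norm{\bPi_{\bu}\bX^{(0)}}_F^2$, and $\norm{\nabla\bF(\bX^{(0)})}_F^2$; the hypothesis $d\ge 2$ keeps the window sums and the polynomial-in-$d$ constants in their stated form. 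The delicate step is the bookkeeping of these $\operatorname{poly}(d)$ constants: because the argument replaces a spectral-gap contraction by the exact nilpotency $\cR_0^d=0$, each Cauchy--Schwarz over a length-$d$ window and each multiplicity count must be tracked carefully so that the absorbed coefficients stay strictly below one.
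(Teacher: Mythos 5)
Your skeleton is sound and is a genuine reorganization of the paper's argument rather than a reproduction of it. The paper does not keep $\bY^{(t)}$ as an intermediate quantity: it unrolls the joint recursion all the way down to the stochastic gradients (equation \eqref{eq:pp11}), decomposes $\bPi_{\bu}\bX^{(t)}$ into eight explicit terms $\bQ_{0,t,1},\dots,\bQ_{4,t}$ that separate martingale (noise) parts from mean parts and separate the $\frac1n\bv\mone^{\T}$ projections, bounds each term individually (\cref{l.bXQ1}--\cref{l.bXQ00}, using \cref{l.X_diff} for the $\bar\bX$-increments), and only then absorbs the self-referential terms. Your route --- the two-line recursion $\bPi_{\bu}\bX^{(t+1)}=\cR_0\bPi_{\bu}\bX^{(t)}-\gamma\cR_0\bY^{(t)}$, a master inequality driven by $\sum_s\expect\norm{\bY^{(s)}}_F^2$, and a coupled-system closure --- is cleaner to state and relies on the same two structural pillars (nilpotency from \cref{l.R0knorm} and the increment form of the tracker to avoid the heterogeneity blow-up from $\norm{\nabla\bF(\bar\bX^{(t)})}_F^2$). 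All the orders in $\gamma$, $d$, and the initial terms check out.

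There is, however, one concrete gap, and it sits exactly where you are least specific. To reach the stated coefficient $300\gamma^2 n^2 d^4(T+1)\sigma^2$ (rather than $n^3$), the noise must pass through the powers of $\cC-\frac1n\bv\mone^{\T}$ using the structural fact that each row of $\prt{\cC-\frac1n\bv\mone^{\T}}^k$ aggregates a \emph{disjoint} group of agents, so that by independence
$$
\expect\norm{\prt{\cC-\tfrac1n\bv\mone^{\T}}^k\prt{\bG^{(t)}-\nabla\bF(\bX^{(t)})}}_F^2\le 2n\sigma^2,
$$
which is the paper's \cref{l.bound_var}. Your plan instead "reduces the task to estimating $\expect\norm{\bG^{(s)}-\bG^{(s-1)}}_F^2$" and then routes it through the operator-norm bound $\norm{\cC_0^k}_2^2\le n$; since $\expect\norm{\bG^{(s)}-\bG^{(s-1)}}_F^2$ already carries an $8n\sigma^2$ variance floor, this gives a noise content of order $d^2 n^2\sigma^2$ in $\expect\norm{\bPi_{\bv}\bY^{(s)}}_F^2$ instead of $d^2 n\sigma^2$, and after the outer factor $\gamma^2 nd^2$ you land at $\gamma^2 n^3 d^4\sigma^2(T+1)$ --- one full factor of $n$ worse than the lemma claims. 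This is not a cosmetic loss: it is precisely the factor that separates the paper's $\tilde{\cO}(n)$ transient time from the baseline $\tilde{\cO}(n^3)$. The fix is to split each increment $\bG^{(m+1)}-\bG^{(m)}$ into its zero-mean part and its mean part \emph{before} applying any operator norm, use the displayed bound on the zero-mean parts, and reserve $\norm{\cC_0^k}_2^2\le n$ for the deterministic gradient differences only; with that repair your coupled-system closure goes through.
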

% \begin{proof}
%     See \cref{pf.l.PiX}.
% \end{proof}
From the design of BTPP, there is an inherent delay in the transmission of information from layer $k$ to layer $1$. As information traverses through the B-ary trees, the delay becomes evident. Specifically, for nodes at layer $k$, their information requires an additional $k$ iterations to successfully reach and impact node $1$, as demonstrated in \cref{l.inner}.
\begin{lemma}
    \label{l.inner}
    For any integer $t>1$, we have 
$$
\begin{aligned}
    & \sum_{m=1}^{\min\{t,d\}} \expect \left\langle \nabla f(x_1^{(t)}), \prt{\frac{\bu^{\T}}{n}-\mone^{\T}} \bA_m \prt{\bG^{(t-m)} - \nabla \bF(\bX^{(t-m)})}\right\rangle = 0,
\end{aligned}
$$
where $\bA_m = \cC^{m} - \cC^{m-1}$ and $\bA_{1} = \cC - \bI$.
\end{lemma}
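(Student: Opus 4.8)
The plan is to first simplify the left-acting row vector, then identify the action of $\bA_m$ combinatorially, and finally exploit a delay-induced conditional independence. Since $\bu^\T/n = e_1^\T$ (the first canonical basis row vector) and $\cC$ is column stochastic, $\mone^\T\cC^k=\mone^\T$ for every $k\ge0$, so $\mone^\T\bA_m=\mone^\T(\cC^m-\cC^{m-1})=0$. This collapses the troublesome vector to $\prt{\frac{\bu^\T}{n}-\mone^\T}\bA_m = e_1^\T\bA_m$, removing the $\mone^\T$ contribution entirely.

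Next I would compute $e_1^\T\bA_m$ explicitly. Because $\cR$ is a $0$–$1$ row-stochastic matrix whose rows each contain a single $1$ (the parent pointer, with a self-loop at the root), every power $\cR^m$ is again $0$–$1$ with $(\cR^m)_{i,1}=1$ if and only if iterating the parent map $m$ times from node $i$ reaches node $1$, i.e. iff node $i$ lies at depth at most $m$. Transposing, $e_1^\T\cC^m=\sum_{i:\,\mathrm{layer}(i)\le m}e_i^\T$, whence
$$
e_1^\T\bA_m \;=\; e_1^\T\cC^m-e_1^\T\cC^{m-1} \;=\; \sum_{i:\,\mathrm{layer}(i)=m}e_i^\T .
$$
Thus the $m$-th summand equals $\sum_{i:\,\mathrm{layer}(i)=m}\expect\big\langle\nabla f(x_1^{(t)}),\,g_i(x_i^{(t-m)},\xi_i^{(t-m)})-\nabla f_i(x_i^{(t-m)})\big\rangle$, a sum of the layer-$m$ nodes' gradient noise evaluated $m$ steps in the past.

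The heart of the argument is to show this past noise is uncorrelated with $\nabla f(x_1^{(t)})$. I would unroll the recursions in \eqref{eq:ppSGD}: an Abel summation on $\bY^{(t)}=\cC\bY^{(t-1)}+\bG^{(t)}-\bG^{(t-1)}$ (with $\bY^{(0)}=\bG^{(0)}$) yields $\bY^{(s)}=\bG^{(s)}+\sum_{m'=1}^{s}\bA_{m'}\bG^{(s-m')}$, while $e_1^\T\cR=e_1^\T$ gives the clean node-$1$ recursion $x_1^{(t)}=x_1^{(0)}-\gamma\sum_{s=0}^{t-1}y_1^{(s)}$. Combining these with the layer identity above, $y_1^{(s)}=e_1^\T\bY^{(s)}$ involves the sample $\xi_i^{(\tau)}$ of a layer-$\ell$ node $i$ only through the index $\tau=s-\ell$. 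Hence over $s\le t-1$ the latest layer-$m$ contribution entering $x_1^{(t)}$ is from iteration $(t-1)-m$, strictly earlier than $t-m$; equivalently, the layer-$m$ noise drawn at time $t-m$ first enters $y_1^{(t)}$ and therefore influences only $x_1^{(t+1)}$, matching the ``extra $m$ iterations'' delay. Consequently $x_1^{(t)}$, and thus $\nabla f(x_1^{(t)})$, does not depend on $\xi_i^{(t-m)}$; moreover $x_i^{(t-m)}$ is determined before $\xi_i^{(t-m)}$ is drawn.

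To conclude, I would condition on the $\sigma$-algebra generated by all randomness except the single sample $\xi_i^{(t-m)}$. Under this conditioning both $\nabla f(x_1^{(t)})$ and $x_i^{(t-m)}$ are measurable, and \cref{a.var} together with the independence of $\xi_i^{(t-m)}$ gives $\expect[g_i(x_i^{(t-m)},\xi_i^{(t-m)})\mid\cdot]=\nabla f_i(x_i^{(t-m)})$, so each conditional inner product vanishes; the tower property then kills every summand, hence the whole sum (indeed this proves the stronger node-by-node vanishing). The main obstacle is the bookkeeping in the third paragraph: certifying the exact delay that $\xi_i^{(t-m)}$ is absent from $x_1^{(t)}$ requires carefully tracking the supports of $e_1^\T\cC^{m'}$ through the unrolled recursions and pinning down the off-by-one ($y_1^{(t)}$ versus $x_1^{(t+1)}$). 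Once the layer identity and this independence are established, the expectation step is routine.
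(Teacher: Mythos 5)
Your proposal is correct and follows essentially the same route as the paper: you identify $\prt{\frac{\bu^{\T}}{n}-\mone^{\T}}\bA_m$ as the indicator of the depth-$m$ layer (the paper's \cref{l.Ai}), establish by unrolling the recursion that $x_1^{(t)}$ is independent of the layer-$m$ samples $\xi_i^{(t-m)}$ due to the $m$-step transmission delay, and then kill each summand by conditioning and unbiasedness (the paper's \cref{lem:independ_help}). The only cosmetic differences are that you derive the layer identity via $\mone^{\T}\bA_m=0$ and the first column of $\cR^m$ rather than citing \cref{l.Ai}, and you use the closed-form expression for $\bY^{(s)}$ where the paper iterates the recursion step by step; the bookkeeping you flag as the main obstacle is handled at the same (informal, induction-by-iteration) level of detail in the paper's own proof.
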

% \begin{proof}
%     See \cref{pf.l.inner}.
% \end{proof}

Building on the preceding lemmas, we are in a position to establish the main convergence result for the BTPP algorithm. This involves upper bounding the expected norm for the gradient of the objective function evaluated at $x_1^{(t)}$. To show the result, we integrate the findings from \cref{l.X_diff}, \cref{l.PiX}, and \cref{l.inner}, as detailed in \cref{l.fbarX}.
\begin{lemma}
    \label{l.fbarX}
    Suppose \cref{a.var} and \cref{a.smooth} hold and $\gamma\le \frac{1}{100nd^3 L}$, we have
    $$
    \begin{aligned}
        &\frac{1}{T+1} \sum_{t=0}^{T}\expect\norm{\nabla f(x_1^{(t)})}_2^2 \le \frac{8\Delta_f}{\gamma n (T+1)} + 24\gamma  \sigma^2 L+ 20000\gamma^2 n d^6 \sigma^2 L^2 \\
        &\qquad + \frac{400 d^3 L^2\norm{\bPi_{\bu}\bX^{(0)}}_F^2 }{T+1} + \frac{56 \gamma d^3 L\norm{\nabla \bF(\bX^{(0)})}_F^2}{T+1}.
    \end{aligned}
    $$
\end{lemma}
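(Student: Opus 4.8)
The final statement is Lemma~\ref{l.fbarX}, which bounds the average expected gradient norm $\frac{1}{T+1}\sum_{t=0}^T \expect\|\nabla f(x_1^{(t)})\|_2^2$. This is the central descent-type inequality that, once combined with the stepsize choice, yields the main Theorem~\ref{thm:convergence}. The three auxiliary lemmas (\ref{l.X_diff}, \ref{l.PiX}, \ref{l.inner}) are explicitly flagged as the inputs, so my plan is to derive a descent inequality for the objective evaluated along the ``virtual'' averaged iterate and then substitute these bounds.

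**Main plan.** The natural starting point is a standard $L$-smoothness descent step. Since node $1$ carries the true average (recall $\bu = [n,0,\dots,0]^\top$, so $\bar{\bX}^{(t)} = \frac{1}{n}\mone\bu^\top\bX^{(t)}$ has every row equal to $x_1^{(t)}$), I would expand
$$
\expect\, f(x_1^{(t+1)}) \le \expect\, f(x_1^{(t)}) + \expect\langle \nabla f(x_1^{(t)}), x_1^{(t+1)} - x_1^{(t)}\rangle + \tfrac{L}{2}\expect\|x_1^{(t+1)} - x_1^{(t)}\|^2.
$$
The update of $x_1$ is driven by $\bY^{(t)}$, whose leading component tracks the global gradient only with the layer-wise delay described before Lemma~\ref{l.inner}. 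So the cross term $\langle \nabla f(x_1^{(t)}), x_1^{(t+1)}-x_1^{(t)}\rangle$ must be split into (i) a genuine descent piece $\approx -\gamma\|\nabla f(x_1^{(t)})\|^2$, (ii) delayed-gradient discrepancy terms of the form $\langle \nabla f(x_1^{(t)}), (\frac{\bu^\top}{n}-\mone^\top)\bA_m(\cdots)\rangle$, and (iii) consensus-error terms. The delayed stochastic-gradient noise terms vanish in expectation precisely by Lemma~\ref{l.inner}; the deterministic delayed pieces get controlled against the gradient norm via Young's inequality, picking up the $d$-dependent factors.

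**Assembling the bound.** After telescoping over $t=0,\dots,T$ and dividing by $\gamma n(T+1)$, the cross term yields the leading $\frac{8\Delta_f}{\gamma n(T+1)}$ and a negative multiple of $\sum_t\expect\|\nabla f(x_1^{(t)})\|^2$ that I must preserve to absorb error terms later. The quadratic term $\frac{L}{2}\expect\|x_1^{(t+1)}-x_1^{(t)}\|^2 = \frac{L}{2}\expect\|\bar{\bX}^{(t+1)}-\bar{\bX}^{(t)}\|_F^2/\text{(per-row)}$ is exactly what Lemma~\ref{l.X_diff} controls, contributing the $\gamma\sigma^2 L$ term and feeding consensus-error and gradient-norm terms back in. The consensus error $\sum_t\expect\|\bPi_{\bu}\bX^{(t)}\|_F^2$ that appears is then eliminated by substituting Lemma~\ref{l.PiX}, which introduces the $\gamma^2 n d^6\sigma^2 L^2$ term and the initialization terms $\|\bPi_{\bu}\bX^{(0)}\|_F^2$ and $\|\nabla\bF(\bX^{(0)})\|_F^2$. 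The key bookkeeping is that every reinjected $\sum_t\expect\|\nabla f(x_1^{(t)})\|^2$ carries a coefficient $\propto \gamma^2 n^3 d^{O(1)} L^2$ (from Lemmas~\ref{l.X_diff} and \ref{l.PiX}); the stepsize constraint $\gamma \le \frac{1}{100nd^3 L}$ is chosen exactly so that the sum of these coefficients stays below the negative $\frac12\cdot\frac{1}{\gamma n}$-type coefficient from the descent step, letting me absorb all gradient-norm terms on the left-hand side with room to spare.

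**Anticipated obstacle.** The delicate part is not any single inequality but the \emph{coefficient accounting} in the absorption step: I must carefully track the powers of $n$, $d$, and $L$ multiplying each reinjected $\sum_t\expect\|\nabla f(x_1^{(t)})\|^2$ term and verify that $\gamma \le \frac{1}{100nd^3L}$ makes their total at most half the descent coefficient, so the left-hand side survives with a clean constant. A secondary subtlety is handling the delayed cross terms: I expect to express $x_1^{(t+1)}-x_1^{(t)}$ (equivalently $\bar{\bX}^{(t+1)}-\bar{\bX}^{(t)}$) using the recursion so that the $\bA_m = \cC^m - \cC^{m-1}$ structure emerges, matching the hypothesis of Lemma~\ref{l.inner} exactly; getting the summation range $\min\{t,d\}$ and the telescoping of $\cC^m$ right (using that $\cC^k - \frac1n\bv\mone^\top$ vanishes for $k\ge d$, the analogue of Lemma~\ref{l.R0knorm}) is where the $d^3$ factors are generated and where an off-by-one in the delay could corrupt the constants.
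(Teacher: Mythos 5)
Your proposal follows essentially the same route as the paper's proof: the $L$-smoothness descent step at $x_1^{(t)}$, the split of the cross term using $\bPi_{\bv}\bY^{(t)} = \sum_m \bA_m\bG^{(t-m)} + \bPi_{\bv}\bG^{(t)}$ with \cref{l.inner} annihilating the delayed stochastic-noise inner products and Young's inequality controlling the deterministic delayed pieces, followed by telescoping and substitution of \cref{l.X_diff} and \cref{l.PiX} with absorption of the reinjected gradient-norm sums under $\gamma \le \frac{1}{100nd^3L}$. The outline is correct and identifies the same key steps and the same delicate points (coefficient accounting and the $\min\{t,d\}$ delay structure) as the paper.
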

% \begin{proof}
%     See \cref{pf.l.fbarX}.
% \end{proof}
\begin{remark}
    \cref{l.fbarX} implies that the transient time of BTPP under \cref{a.smooth} is influenced by the fourth term in the upper bound: $\frac{400d^3 L^2 \norm{\bPi_{\bu}\bX^{(0)}}_F^2}{T+1}$ which is related to the initial consensus error. Therefore, we initialize all the agents with the same solution $x^{(0)}$.
\end{remark}

%%%%%%%%%%%%%%%%%%%%%%%%%%%%%%%%%%%%%%%%%%%%%%%%%%%%%%%%%%%%%%%%%%%%%%%%%%%%%%%%%%%%%%%%%%%%%%%%%
 Under strong convexity of $f$, we have the following key lemma.
 \begin{lemma}
    \label{l.x-star}
   Suppose \cref{a.var}, \cref{a.smooth} and \cref{a.convex} hold, and $\gamma \le \frac{1}{100n d^2 \kappa L}$, we have
    $$
    \begin{aligned}
    &\mathbb{E}\left\|x_1^{(T)} - x^* \right\|^2 \le \left(1 - \frac{n\gamma \mu}{4} \right)^{T}\left\|x_1^{(0)} - x^* \right\|^2 \\
    & \quad + 7\gamma^2 n \sigma^2 \prt{T+1} + 21000 \gamma^3 n^2  d^6 \kappa L\sigma^2\prt{T+1}\\
    &\quad + 80\gamma^2 n d^3\prt{1-\frac{n\gamma\mu}{4}}^{T-d}\norm{\nabla \bF(\bX^{(0)})} + 420\gamma^3 n^2 d^3 \kappa L \norm{\bPi_{\bu}\bX^{(0)}}_F^2 ,
    \end{aligned}
    $$
    where $\kappa := L/\mu$ is the conditional number.
\end{lemma}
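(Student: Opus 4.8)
The plan is to reduce the evolution of the root iterate $x_1^{(t)}$ to a perturbed strongly convex gradient step and then unroll the resulting one-step contraction. First I would observe that, because node $1$ is the only node carrying a self-loop in $\cG_{\cR}$ and has no parent, the first row of $\cR$ equals $\prt{1,0,\ldots,0}$; hence the compact form \eqref{eq:ppSGD} yields the exact root update $x_1^{(t+1)} = x_1^{(t)} - \gamma y_1^{(t)}$ (equivalently $\bar{\bX}^{(t+1)} = \bar{\bX}^{(t)} - \gamma\mone\,(y_1^{(t)})^{\T}$, using $\bu^{\T}\cR = \bu^{\T}$ and $\tfrac{1}{n}\bu^{\T}\bY^{(t)} = (y_1^{(t)})^{\T}$). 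The real task is therefore to quantify how far $y_1^{(t)}$ is from the \emph{ideal} aggregated gradient $n\nabla f(x_1^{(t)})$.

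To this end I would unroll the $\bY$-recursion as $\bY^{(t)} = \cC^{t}\bG^{(0)} + \sum_{s=1}^{t}\cC^{t-s}\prt{\bG^{(s)}-\bG^{(s-1)}}$ and exploit the transpose of \cref{l.R0knorm}, namely $\cC^{m} = \tfrac{1}{n}\bv\mone^{\T}$ for $m\ge d$. Reading off the first row for $t\ge d$ collapses all contributions older than $d$ steps into a single aggregated term $\mone^{\T}\bG^{(t-d)}$ plus a window of $d$ recent corrections $\sum_{m=0}^{d-1}\tfrac{1}{n}\bv^{\T}\cC^{m}\prt{\bG^{(t-m)}-\bG^{(t-m-1)}}$; for $t<d$ the residual $\cC^{t}\bG^{(0)}$ survives and generates the $\norm{\nabla\bF(\bX^{(0)})}$ contribution that later carries the factor $\prt{1-\tfrac{n\gamma\mu}{4}}^{T-d}$. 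Writing $y_1^{(t)} = n\nabla f(x_1^{(t)}) + r^{(t)}$, I would split $r^{(t)}$ into (i) a conditionally zero-mean stochastic part built from the gradient noises, (ii) a consensus error controlled by $\norm{\bPi_{\bu}\bX^{(t-d)}}_F$ (since $\bar{\bX}^{(t)} = \mone (x_1^{(t)})^{\T}$ makes $\bPi_{\bu}\bX$ the matrix of deviations $x_j-x_1$), and (iii) a drift error over the delay window, whose deterministic part is bounded by $L\norm{\bX^{(t-m)}-\bX^{(t-m-1)}}_F$ and thus fed into \cref{l.X_diff} together with the consensus deviations.

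With this decomposition I would run the standard strongly convex one-step analysis on $x_1^{(t+1)}-x^{*} = \prt{x_1^{(t)}-x^{*}} - n\gamma\nabla f(x_1^{(t)}) - \gamma r^{(t)}$: using \cref{a.smooth} and \cref{a.convex} with the effective stepsize $\eta := n\gamma\le 1/L$ gives $\norm{\prt{x_1^{(t)}-x^{*}} - n\gamma\nabla f(x_1^{(t)})}^{2} \le \prt{1-n\gamma\mu}\norm{x_1^{(t)}-x^{*}}^{2}$, and a Young's inequality reserving an $n\gamma\mu/2$ slack absorbs the cross term to yield $\expect\norm{x_1^{(t+1)}-x^{*}}^{2} \le \prt{1-\tfrac{n\gamma\mu}{4}}\expect\norm{x_1^{(t)}-x^{*}}^{2} + C\gamma^{2}\expect\norm{r^{(t)}}^{2}$. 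The cross term between the contraction direction and the stochastic part of $r^{(t)}$ must vanish in expectation, which is delicate because the delayed noise $\xi^{(t-d)}$ is \emph{not} independent of $x_1^{(t)}$. I would handle it exactly in the spirit of \cref{l.inner}: the projection $\prt{\tfrac{\bu^{\T}}{n}-\mone^{\T}}\bA_m$ annihilates precisely the noise from the layers that have not yet reached node $1$, so the relevant inner products cancel layer-by-layer. Finally I would unroll over $t=0,\ldots,T-1$, retain the geometric factor on the initial distance to obtain $\prt{1-\tfrac{n\gamma\mu}{4}}^{T}\norm{x_1^{(0)}-x^{*}}^{2}$, and bound the accumulated error sum $\sum_t\expect\norm{r^{(t)}}^{2}$ by substituting the summed estimates of \cref{l.PiX} and \cref{l.X_diff}; the $(T+1)\sigma^{2}$ structure of those bounds is what produces the $\gamma^{2}n\sigma^{2}(T+1)$ and $\gamma^{3}n^{2}d^{6}\kappa L\sigma^{2}(T+1)$ terms, while the initial-consensus term of \cref{l.PiX} and the $t<d$ residual give the two $\norm{\bPi_{\bu}\bX^{(0)}}_F^{2}$ and $\norm{\nabla\bF(\bX^{(0)})}$ contributions.

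The main obstacle is the joint control of $r^{(t)}$: separating its conditionally unbiased stochastic component from the deterministic consensus and drift errors while (a) proving the delayed-noise cross terms vanish through the layer-wise orthogonality behind \cref{l.inner}, and (b) tracking the $\cC^{m}$ weight factors, whose row sums can grow like $B^{m}$, so that the $d$-dependence and the absolute constants match the stated bound. Feeding the summed consensus estimate of \cref{l.PiX} into a per-step contraction, rather than into the averaged nonconvex bound it was originally designed for, is the step that demands the most care with the stepsize constraint $\gamma\le\tfrac{1}{100nd^{2}\kappa L}$ and with the emergence of the factor $\kappa$.
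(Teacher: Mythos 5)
Your proposal follows essentially the same route as the paper's proof: reduce the root update to the perturbed strongly convex step $x_1^{(t+1)} = x_1^{(t)} - \gamma y_1^{(t)}$, decompose $y_1^{(t)}$ through $\bPi_{\bv}\bY^{(t)}$ and the delay window, kill the delayed-noise cross terms via \cref{l.inner} (applied with $x_1^{(t)}-x^*$ in place of $\nabla f(x_1^{(t)})$), and close the recursion by re-deriving \cref{l.X_diff} and \cref{l.PiX} with the geometric weights $\prt{1-\frac{n\gamma\mu}{4}}^{T-t}$ inserted — exactly the delicate step you flag. One small caution: the paper retains the $-\frac{n\gamma}{2L}\norm{\nabla f(x_1^{(t)})}^2$ slack from the combined strong-convexity/smoothness bound rather than discarding it as your contraction inequality $\norm{(x_1^{(t)}-x^*)-n\gamma\nabla f(x_1^{(t)})}^2\le(1-n\gamma\mu)\norm{x_1^{(t)}-x^*}^2$ does, because that negative term is what absorbs the $\sum_t\norm{\nabla f(x_1^{(t)})}^2$ contributions that \cref{l.X_diff} and \cref{l.PiX} feed back into the recursion; without it you would have to convert those terms to $\norm{x_1^{(t)}-x^*}^2$ via $L$-smoothness and absorb them into the $\frac{n\gamma\mu}{4}$ margin, which still works under $\gamma\le\frac{1}{100nd^2\kappa L}$ but with different constants.
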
 
%%%%%%%%%%%%%%%%%%%%%%%%%%%%%%%%%%%%%%%%%%%%%%%%%%%%%%%%%%%%%%%%%%%%%%%%%%%%%%%%%%%%%%%%%%%%%%%%%

\section{Numerical Results}
\label{sec:numerical}

This section presents experimental results to compare the B-ary Tree Push-Pull method with other popular algorithms on logistic regression with synthetic data and deep learning tasks with real data.

\subsection{Logistic Regression}

We compare the performance of BTPP against other algorithms listed in \cref{tab:topo} for logistic regression with non-convex regularization \cite{song2022communication}. The objective functions $f_i: \mathbb{R}^p\rightarrow \mathbb{R}$ are given by
$$
f_i(x) := \frac{1}{J} \sum_{j = 1}^{J} \ln\prt{1+ \exp(-y_{i,j}h_{i,j}^\T x)} + R\sum_{k = 1}^{p} \frac{x_{[k]}^2}{1+ x_{[k]}^2},
$$
where $x_{[k]}$ is the $k$-th element of $x$, and $\crk{(h_{i,j}, y_{i,j})}_{j=1}^J$ represent the local data kept by node $i$. 
To control the data heterogeneity across the nodes, we first let each node $i$ be associated with a local logistic regression model with parameter $\tilde{x}_i$ generated by $\tilde{x}_i = \tilde{x} + v_i$, where $\tilde{x} \sim \cN(0,\bI_p)$ is a common random vector, and $v_i\sim\cN(0, \sigma_h^2\bI_p)$ are random vectors generated independently. Therefore, $\{v_i\}$ decide the dissimilarities between $\tilde{x}_i$, and larger $\sigma_h$ generally amplifies the difference. After fixing $\crk{\tilde{x}_i}$, local data samples are generated that follow distinct distributions. For node $i$, the feature vectors are generated as $h_{i,j} \sim \cN(0, \bI_p)$, and $z_{i,j}\sim\cU(0,1)$. Then, the labels $y_{i,j}\in \crk{-1,1}$ are set to satisfy $z_{i,j}\le 1 + \exp(-y_{i,j}h_{i,j}^\T \tilde{x}_i)$. 

In the simulations, the parameters are set as follows: $n=100$, $p=500$, $J = 1000$, $R = 0.01$, and $\sigma_h = 0.8$. All the algorithms initialize with the same stepsize $\gamma = 0.3$, except BTPP, which employs a modified stepsize $\gamma/n$. Such an adjustment is due to BTPP's update mechanism, which incorporates a tracking estimator that effectively accumulates $n$ times the averaged stochastic gradients as the number of iterations increases. This can also be seen from the stepsize choice in Theorem \ref{thm:convergence}.\footnote{Note that this particular configuration results in slower convergence for BTPP during the initial $\mathcal{O}(d)$ iterations, which can be improved by using larger initial stepsizes.} Additionally, we implement a stepsize decay of 60\% every $100$ iterations to facilitate convergence. 

In Figure \ref{fig:LR}, the gradient norm is used as a metric to gauge the algorithmic performance of each algorithm.  The left panel of \cref{fig:LR} illustrates the comparative performance of various algorithms, highlighting that BTPP (in red) achieves faster convergence than the other algorithms with $\Theta(1)$ degree and closely approximates the performance of the centralized SGD algorithm (i.e., DSGD-FullyConnected). The right panel of \cref{fig:LR} demonstrates the behavior of BTPP when increasing the branch size $B$. It is observed that with larger $B$, the convergence trajectory of BTPP more closely aligns with that of centralized SGD, corroborating the prediction of the theoretical analysis.

\begin{figure}[ht]
    \centering
    % First minipage for the first figure
    \begin{minipage}{0.49\textwidth}
        \centering
        \includegraphics[width=\linewidth]{./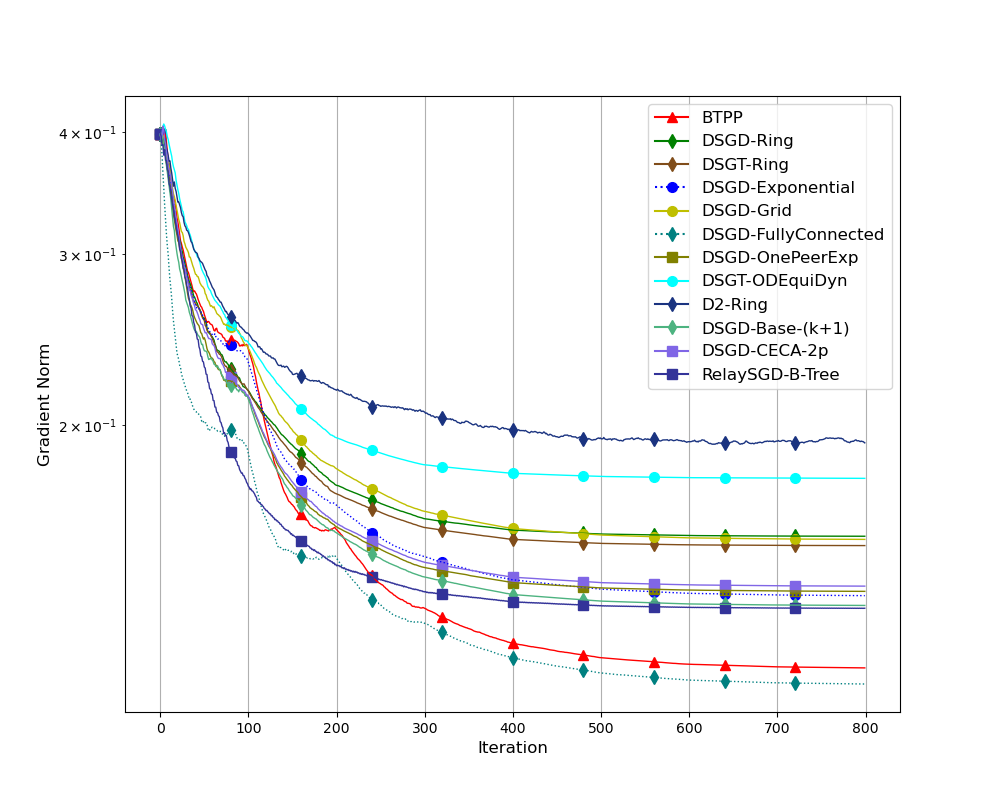} % Path to your first figure
    \end{minipage}
    \hfill % Optional space between the figures
    % Second minipage for the second figure
    \begin{minipage}{0.49\textwidth}
        \centering
        \includegraphics[width=\linewidth]{./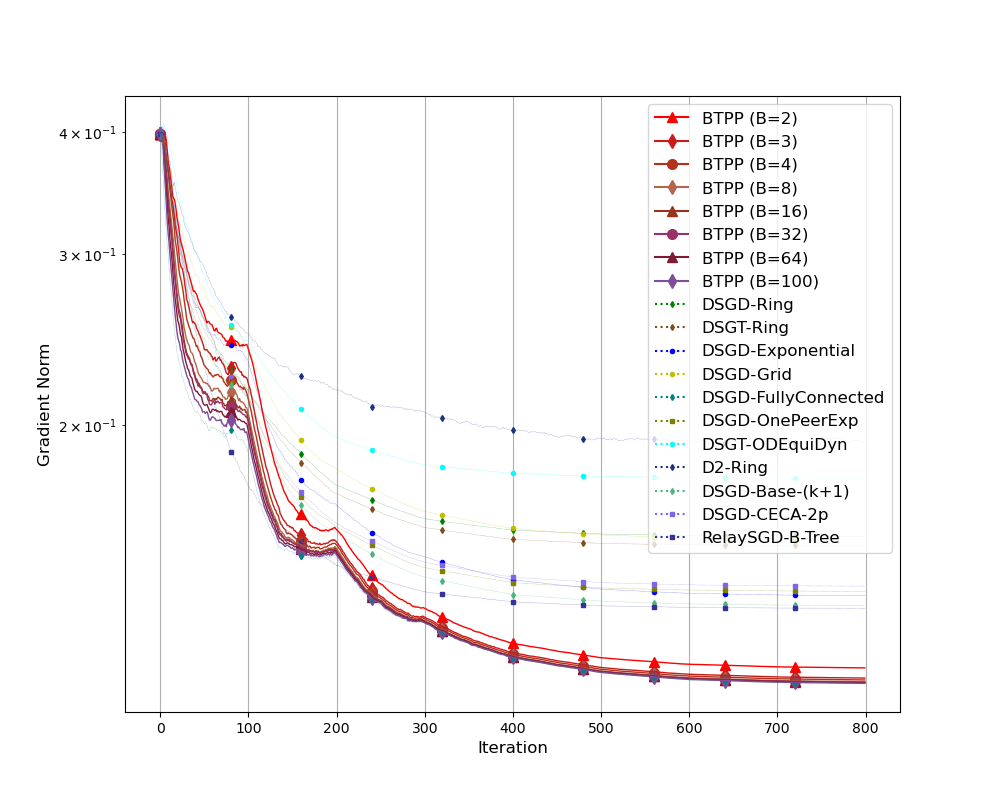} % Path to your second figure
    \end{minipage}
    \caption{Left: performance of algorithms for logistic regression with nonconvex regularization, where the dotted lines correspond to algorithms whose degrees are not $\Theta(1)$. We let the branch size $B=2$ in BTPP, $\eta = 0.5$ in OD-EquiDyn, $k=2$ in Base-($k+1$), and perform RelaySGD on a binary tree graph for fairness. Right: performance of BTPP with different branch size $B$.}
    \label{fig:LR}
\end{figure}

\subsection{Deep Learning}
\label{sec:deep}
We apply BTPP and the other algorithms to solve the image classification task with CNN over \textbf{MNIST} dataset \cite{lecun2010mnist}. We run all experiments on a server with eight Nvidia RTX 3090 GPUs. The network contains two convolutional layers with max pooling and ReLu and two feed-forward layers. In particular, we consider a heterogeneous data setting, where data samples are sorted based on their labels and partitioned among the agents. 
The local batch size is set to $8$ with $24$ agents in total. The learning rate is $0.01$ for all the algorithms except BTPP (which employs a modified stepsize $\gamma/n$) for fairness. Additionally, the starting model is enhanced by pre-training using the SGD optimizer on the entire MNIST dataset for several iterations. 
Figure \ref{fig:mnist} illustrates the training loss and the test accuracy curves.
Comparing the performance of different algorithms, it can be seen that BTPP (in red)  and DSGT with ODEquiDyn (based on $\Theta(n)$ graphs) achieve faster convergence than the other algorithms with $\Theta(1)$ degree and closely approximate the performance of centralized SGD(i.e., DSGD-FullyConnected). 
\begin{figure}[ht]
    \centering
    % First minipage for the first figure
    \begin{minipage}{0.49\textwidth}
        \centering
        \includegraphics[width=\linewidth]{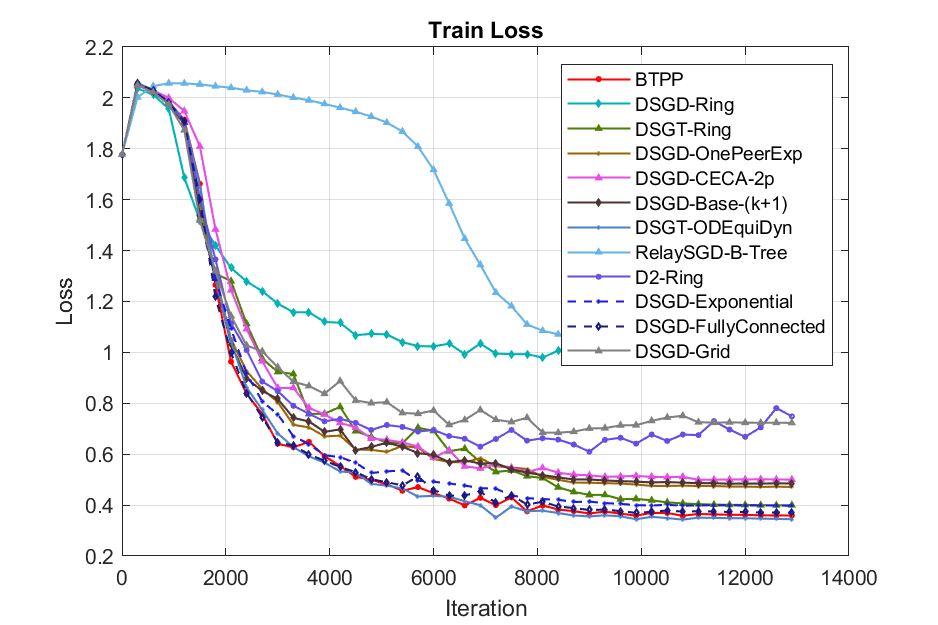} % Path to your first figure
    \end{minipage}
    \hfill % Optional space between the figures
    % Second minipage for the second figure
    \begin{minipage}{0.49\textwidth}
        \centering
        \includegraphics[width=\linewidth]{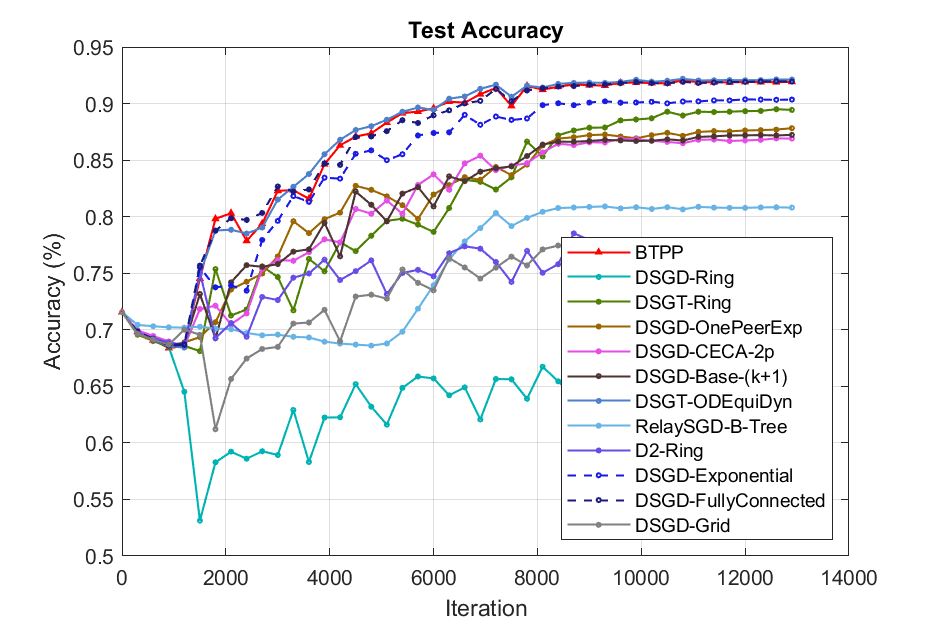} % Path to your second figure
    \end{minipage}
    \caption{Train loss and test accuracy of different algorithms for training CNN on MNIST, where the dotted lines correspond to the algorithms whose degrees are not $\Theta(1)$. We perform BTPP with $B=2$, ODEquiDyn with $\eta = 0.5$, Base-($k+1$) with $k=2$, and RelaySGD on a binary tree graph for fairness.}
    \label{fig:mnist}
\end{figure}

\begin{remark}
 Higher accuracy can be achieved for BTPP and other methods when using the momentum technique, or when the data heterogeneity is removed, meaning that samples are randomly assigned to each agent. Additional experiments demonstrating the performance of various algorithms across different tasks and scenarios are provided in \cref{appendix:exp}.
\end{remark}
\section{Conclusions}
\label{sec:conclusion}

This paper proposes a novel algorithm for distributed learning over heterogeneous data, named BTPP. The convergence is theoretically analyzed for smooth non-convex stochastic optimization. The results demonstrate that, at the minimal communication cost per iteration, BTPP achieves linear speedup in the number of nodes $n$, and the transient times behaves as $\tilde{O}(n)$ and $\tilde{O}(1)$ respectively for smooth nonconvex and strongly convex objectives, outperforming the state-of-the-art results. Numerical experiments further validate the efficiency of BTPP.

\newpage
\bibliography{ref}
\bibliographystyle{siamplain}

%%%%%%%%%%%%%%%%%%%%%%%%%%%%%%%%%%%%%%%%%%%%%%%%%%%%%%%%%%%%%%%%%%%%%%%%%%%%%%%
%%%%%%%%%%%%%%%%%%%%%%%%%%%%%%%%%%%%%%%%%%%%%%%%%%%%%%%%%%%%%%%%%%%%%%%%%%%%%%%
% APPENDIX
%%%%%%%%%%%%%%%%%%%%%%%%%%%%%%%%%%%%%%%%%%%%%%%%%%%%%%%%%%%%%%%%%%%%%%%%%%%%%%%
%%%%%%%%%%%%%%%%%%%%%%%%%%%%%%%%%%%%%%%%%%%%%%%%%%%%%%%%%%%%%%%%%%%%%%%%%%%%%%%
\newpage
\appendix
\onecolumn

\section{Convergence Analysis of BTPP}
    In this section, we aim to demonstrate the convergence results of BTPP through a three-step process. First, we explore the key properties of matrices $\cR$ and $\cC$, acquainting readers with several operations that will be frequently utilized in the subsequent parts. Then, we introduce various technical tools essential for the analysis. Finally, we delve into proving the convergence results supported by a series of pertinent lemmas.

\subsection{Properties of the Weight Matrices}
\label{prop.matrix}

In this part, we first demonstrate that $\cR\in \reals^{n\times n}$ possesses a set of properties ( the matrix $\cC=\cR^{\T}$ shares similar properties). Then, we utilize the established tools to prove the crucial result presented in \cref{l.R0knorm}. Lastly, we provide clarifications on certain matrix operations that will be frequently employed in deriving the convergence results. 

It is important to note that for any given $n$ and specific integer $B$, the diameter of the corresponding B-ary tree graph $d$ (the distance from the last layer node to node 1) satisfies $\frac{B^{d} - 1}{B-1}< n \le \frac{B^{d+1} - 1}{B-1}$. 
To investigate the properties of $\cR$ and $\cC$, we will introduce the column vector $\bfe_{\cI} \in \reals^n$, where each element of $\bfe_{\cI}$ is equal to 1 for indices $i\in \cI$ and 0 otherwise. Define the index sets
$$
\begin{aligned}
    \cI_{1,k} & = \crk{ 1 : \frac{B^{k+1}-1}{B-1}}, \\
    \cI_{i,k} & = \crk{\prt{\frac{B^{k+1}-1}{B-1}  + (i-2)B^k + 1}: \prt{\frac{B^{k+1}-1}{B-1}  + (i-1)B^k}},
\end{aligned}
$$
where $k_1:k_2$ is the arithmetic progression from $k_1$ to $k_2$ with difference 1. We can then define the matrix $\bZ_k \in \mathbb{R}^{n\times n}$ as a composite of several column vectors arranged in the following format:
$$
\bZ_k =  \left[ \bfe_{\cI_{1,k} }, \bfe_{\cI_{2,k}}, \cdots,  \bfe_{\cI_{n,k}}\right].
$$
This closed-form expression of $\cR$ with any power $k$ is shown in \cref{l.R} which aids in developing further properties. 
\begin{lemma}
    \label{l.R}
    For the pull matrix $\mathcal{R}$ corresponding to the B-ary tree $\cG_{\cR}$, given any positive index $k$, we have
    $$
    \cR^k = \bZ_k.
    $$
\end{lemma}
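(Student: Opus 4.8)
The plan is to prove $\cR^k = \bZ_k$ by induction on $k$, exploiting two structural features of the construction: the matrix $\cR$ encodes the parent map of the B-ary tree (with a self-loop at the root), and the layer-by-layer labeling forces the descendants of any node to occupy a contiguous block of indices.

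First I would record the two elementary facts that drive the whole argument. Writing $s_k := \frac{B^{k+1}-1}{B-1} = 1 + B + \dots + B^k$ for the number of nodes within distance $k$ of the root in a full tree, a one-line computation gives $s_{k+1} - s_k = B^{k+1}$. Next I would observe that each column of $\cR$ is itself an indicator vector: for $j \ge 2$ the nonzero rows of column $j$ are exactly the children $\{B(j-1)+2, \dots, Bj+1\}$ of node $j$, while for $j=1$ the self-loop additionally contributes row $1$. In both cases $\{\, i : [\cR]_{i,j} = 1 \,\} = \cI_{j,1}$, which establishes the base case $\cR = \bZ_1$.

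For the inductive step I would write $\cR^{k+1} = \cR^{k}\cR = \bZ_k \cR$ and compute columnwise. Since column $j$ of $\cR$ is $\bfe_{\cI_{j,1}}$, the $j$-th column of $\bZ_k \cR$ is $\sum_{m \in \cI_{j,1}} \bfe_{\cI_{m,k}}$, which is the indicator of the union $\bigcup_{m\in\cI_{j,1}} \cI_{m,k}$ precisely when these blocks are pairwise disjoint. Thus the lemma reduces to the purely combinatorial set identity
$$
\bigcup_{m \in \cI_{j,1}} \cI_{m,k} = \cI_{j,k+1},
\qquad \text{with the } \cI_{m,k} \text{ pairwise disjoint.}
$$
For $j \ge 2$ the index set $\cI_{j,1}$ is the block of $B$ consecutive parents $\{B(j-1)+2,\dots,Bj+1\}$, and each $\cI_{m,k}$ is a block of $B^k$ consecutive indices starting at $s_k + (m-2)B^k + 1$; consecutiveness of the $m$'s makes the blocks adjacent and disjoint, and comparing endpoints shows their union runs from $s_k + (j-1)B^{k+1}+1$ to $s_k + jB^{k+1}$, which coincides with $\cI_{j,k+1}$ exactly because $s_{k+1}-s_k = B^{k+1}$. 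The root column $j=1$ must be handled separately: here $\cI_{1,1} = \{1,\dots,B+1\}$ contains $m=1$, whose block $\cI_{1,k} = \{1,\dots,s_k\}$ is the \emph{cumulative} prefix (a direct consequence of the self-loop), and combining it with the adjacent blocks $\cI_{2,k},\dots,\cI_{B+1,k}$ yields $\{1,\dots,s_k+B^{k+1}\} = \{1,\dots,s_{k+1}\} = \cI_{1,k+1}$.

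I expect the main obstacle to be this index bookkeeping rather than any conceptual difficulty: one must keep the geometric-sum offsets $s_k$ aligned across the recursion, and in particular treat the root's self-loop carefully, since it is exactly what turns $\cI_{1,k}$ into the cumulative prefix $\{1,\dots,s_k\}$ instead of a single layer. A closing remark handles non-full trees: all index sets are tacitly intersected with $[n]$, and because $[n]=\{1,\dots,n\}$ is itself a prefix while every set involved is a contiguous block, the union identity is preserved under truncation, so the $n \times n$ matrix identity continues to hold verbatim.
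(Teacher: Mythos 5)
Your proposal is correct and follows essentially the same route as the paper: induction on $k$, writing $\cR^{k+1}=\bZ_k\cR$ and computing columnwise so that the claim reduces to the disjoint-union identity $\bigcup_{m\in\cI_{j,1}}\cI_{m,k}=\cI_{j,k+1}$, with the root column treated separately because of the self-loop. Your added bookkeeping on the offsets $s_k$ and the truncation to $[n]$ for non-full trees only makes explicit what the paper leaves implicit.
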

\begin{proof}
    We prove the lemma by induction. First, it is obvious that $\cR = \bZ_1$ by the definition of $\cR$:
    $$
    \begin{aligned}
        & \cR_{ij} = 1 \\
        \text{ iff } & i \in \{Bj+1-B + [B]\}\cap [n] \text{ or } i=j=1 \\
        \text{ iff } & B(j-1) + 2 \le i \le Bj+1, \ i\in [n] \text{ or } i=j=1\\
        \text{ iff } & [Z_1]_{ij} = 1.
    \end{aligned}
    $$
    Now assume the statement is true for $k = j$. Then, for $k = j+1$, we have
    $$
    \cR^{j+1} = \cR^j *\cR = \bZ_{j} \bZ_1.
    $$
    Denote $[\bZ_{j} \cdot \bZ_1]_i$ as the $i$-th column of $\bZ_j \cdot \bZ_1$. To establish the result, we only need to demonstrate that the two matrices, $\cR^{k+1}$ and $\bZ_{k+1}$, have the same column entries. For $i=1$,
    $$
    \begin{aligned}
        [\bZ_{j} \bZ_1]_1 = \bZ_{j} [\bZ_1]_1 = \sum_{i=1}^{B+1} [\bZ_j]_i 
        = \sum_{i=1}^{B+1} \bfe_{\cI_{i,j}} =  \bfe_{\cup_{i=1}^{B+1} \cI_{i,j} } = \bfe_{\cI_{1,j+1}}.
    \end{aligned}
    $$
    For $i>1$, we have
    $$
    \begin{aligned}
        & [\bZ_{j} \bZ_1]_i= \bZ_{j} [\bZ_1]_i = \sum_{m\in \cI_{i,1}}\brk{\bZ_j}_{m} = \sum_{m=(i-1)B+2}^{iB+1}[\bZ_{j}]_{m}\\
        = & \sum_{m=(i-1)B}^{iB-1} \bfe_{\cI_{m,j}} = \bfe_{\cup_{m=(i-1)B}^{iB-1}\cI_{m,j} } =  \bfe_{\cI_{i,j+1}}.
    \end{aligned}
    $$
    Thus, we conclude that $\cR^{k+1} = \bZ_{k+1}$.
\end{proof}
\cref{l.R0} below reveals that when the power $k$ exceeds $d$, $\cR^k$ transforms into a matrix where the first column is entirely composed of ones, while all the other columns consist of zeros. 
\begin{corollary}
    \label{l.R0}
    For $k = d$, we have 
    $$
    \cR^k - \frac{1}{n}\mone \bu^{\T} = \mathbf{0}
    $$
    where $\mathbf{0}$ is the matrix with all entries equal 0.
\end{corollary}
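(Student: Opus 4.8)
The plan is to reduce everything to the closed-form expression $\cR^d = \bZ_d$ supplied by \cref{l.R}, and then simply read off the columns of $\bZ_d$ using the defining inequality $\frac{B^{d}-1}{B-1} < n \le \frac{B^{d+1}-1}{B-1}$ for the diameter. First I would recall the structure of the target matrix: since $\bu = \brk{n,0,\cdots,0}^{\T}$, we have $\frac{1}{n}\mone\bu^{\T} = \mone\brk{1,0,\cdots,0}$, i.e. the $n\times n$ matrix whose first column equals $\mone$ and whose remaining columns are all zero. Thus it suffices to verify that $\bZ_d$ has exactly this form, column by column, where $\bZ_d = \brk{\bfe_{\cI_{1,d}},\bfe_{\cI_{2,d}},\cdots,\bfe_{\cI_{n,d}}}$ and each $\bfe_{\cI}\in\reals^n$ has a $1$ in entry $j$ iff $j\in\cI$.

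For the first column, the relevant index set is $\cI_{1,d} = \crk{1:\frac{B^{d+1}-1}{B-1}}$. The key observation is that the upper bound $n \le \frac{B^{d+1}-1}{B-1}$ guarantees $\crk{1,2,\ldots,n}\subseteq \cI_{1,d}$, so that every entry of $\bfe_{\cI_{1,d}}$ (viewed in $\reals^n$) equals $1$; hence the first column of $\bZ_d$ is precisely $\mone$, matching the first column of $\frac{1}{n}\mone\bu^{\T}$.

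For the columns indexed by $i\ge 2$, I would show $\cI_{i,d}$ contains no index lying in $[n]$. By definition the smallest element of $\cI_{i,d}$ is $\frac{B^{d+1}-1}{B-1} + (i-2)B^{d} + 1$, which for $i\ge 2$ is at least $\frac{B^{d+1}-1}{B-1} + 1 > n$, again using $n\le \frac{B^{d+1}-1}{B-1}$. Therefore $\cI_{i,d}\cap[n] = \varnothing$ and $\bfe_{\cI_{i,d}} = \mathbf{0}$ in $\reals^n$ for all $i\ge 2$, so every column of $\bZ_d$ beyond the first vanishes. Combining the two cases shows $\cR^d = \bZ_d = \frac{1}{n}\mone\bu^{\T}$, which gives the claim. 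There is no real obstacle here beyond bookkeeping: the only point requiring care is the correct use of the diameter inequality to pin down which index sets are fully contained in, or entirely outside, the range $[n]$, and this is exactly the content of the bound assumed for $d$.
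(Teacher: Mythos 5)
Your proof is correct and follows essentially the same route as the paper's: both invoke \cref{l.R} to write $\cR^d = \bZ_d$ and then use the diameter inequality $n \le \frac{B^{d+1}-1}{B-1}$ to check that the first column of $\bZ_d$ restricted to $[n]$ is $\mone$ while the index sets $\cI_{i,d}$ for $i\ge 2$ lie entirely outside $[n]$. The paper phrases this as the columns of the difference $\cR^d - \frac{1}{n}\mone\bu^{\T}$ vanishing, but the content is identical.
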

\begin{proof}
    From \cref{l.R}, we have for the $i$-th column of $\cR^k - \frac{1}{n}\mone \bu^{\T}$ that
    $$
    \left[\cR^k - \frac{1}{n}\mone \bu^{\T}\right]_i = \left\{ \begin{aligned}
        - \bfe_{\frac{B^{k+1}-1}{B-1}+1:n} & \  i=1\\
        \bfe_{\cI_{i,k}} & \  i>1
    \end{aligned}\right.
    $$
    For $k=d$, the first $n$ elements of all the columns remain $0$, which implies the desired result.
\end{proof}

Now, we are ready to prove \cref{l.R0knorm}:
\begin{proof}[Proof of \cref{l.R0knorm}]
For any integer $k\le d-1$, define 
$$
n_0 := \left\lfloor \frac{n-\frac{B^{k+1} - 1}{B-1}}{B^k} \right\rfloor.
$$
This ensures that $ \frac{B^{k+1} - 1}{B-1} + n_0 B^k \le n$ and $\frac{B^{k+1} - 1}{B-1} + (n_0 + 1) B^k > n$, so that only the first $(n_0 +2)$-th columns of $\cR^k$ consist of non-zero elements.
Note that
$$
\max_{\|\bx\|_2 = 1} \left\{ \|\prt{\cR^k - \frac{1}{n}\mone\bu^{\T}} \bx\|_2^2 \right\} = \max_{\bx}\left\{\frac{\|\prt{\cR^k - \frac{1}{n}\mone\bu^{\T}} \bx\|_2^2 }{ \|\bx\|^2_2} \right\}.
$$
Then, we focus on the non-zero elements of the matrix $\cR^k - \frac{1}{n}\mone\bu^{\T}$.
$$
\begin{aligned}
    & \frac{\|\prt{\cR^k - \frac{1}{n}\mone\bu^{\T}} \bx\|_2^2 }{ \|\bx\|^2_2} &= \frac{ 1 }{ \|\bx\|^2_2} \prt{\sum_{j=2}^{n_0 + 1} B^k (x_j - x_1)^2 + \brk{n- \frac{B^{k+1} - 1}{B-1} - B^k n_0}\prt{x_{n_0 + 2} - x_1}^2} \\
    & := \frac{\tilde{\bx}^{\T} \Sigma \tilde{\bx}}{\|\bx\|^2},
\end{aligned}
$$
where $\tilde{\bx} = \prt{x_1, \cdots, x_{n_0+2}}$ is the truncated $\bx$, and 
$$
\Sigma = \prt{\begin{matrix}n - \frac{B^{k+1} - 1}{B-1} & -B^k  & \cdots & -\brk{n- \frac{B^{k+1} - 1}{B-1} - B^k n_0}\\
-B^k & B^k &  &  \\
\vdots&  & \ddots &\\
-\brk{n- \frac{B^{k+1} - 1}{B-1} - B^k n_0} & & & \brk{n- \frac{B^{k+1} - 1}{B-1} - B^k n_0}\end{matrix}},
$$
where the unspecified elements are all zeros. Since $\Sigma$ is symmetric, all the eigenvalues are real. We show by contradiction that any eigenvalue $\lambda$ of $\Sigma$ is upper bounded by $n$.
Otherwise, if there exists $\lambda > n$, we denote $\bx$ as the corresponding eigenvector of $\lambda$. Then, we have from $\Sigma\bx = \lambda\bx$ that
$$
\begin{aligned}
    \lambda x_1 &= \prt{n - \frac{B^{k+1} - 1}{B-1}}x_1 - B^k x_2 - \cdots - \brk{n- \frac{B^{k+1} - 1}{B-1} - B^k n_0}x_{n_0 + 2} \\
    \lambda x_2 &= -B^k x_1 + B^k x_2 \\
    \lambda x_3 &= -B^k x_1 + B^k x_3 \\
    & \cdots \\
    \lambda x_{n_0 + 2} &= -\brk{n- \frac{B^{k+1} - 1}{B-1} - B^k n_0}x_1 + \brk{n- \frac{B^{k+1} - 1}{B-1} - B^k n_0}x_{n_0 + 2}.
\end{aligned}
$$
Without loss of generality, assume $x_1 \neq 0$. Then, by substituting the other relations into the first one, we have
$$
\lambda = n - \frac{B^{k+1} - 1}{B-1} + \sum_{i=1}^{n_0}\frac{ B^{2k}}{\lambda - B^k} + \frac{\brk{n- \frac{B^{k+1} - 1}{B-1} - B^k n_0}^2}{\lambda - \brk{n- \frac{B^{k+1} - 1}{B-1} - B^k n_0}}.
$$
With the fact that $\lambda > n$, there holds
$$
\begin{aligned}
    \lambda \le & n - \frac{B^{k+1} - 1}{B-1} + \frac{ n_0 B^{2k}}{n - B^k} + \frac{\brk{n- \frac{B^{k+1} - 1}{B-1} - B^k n_0}^2}{n - \brk{n- \frac{B^{k+1} - 1}{B-1} - B^k n_0}}  \\
    = & n - \frac{B^{k+1} - 1}{B-1} + \frac{ n_0 B^{2k}}{n - B^k} + \frac{n^2}{ \frac{B^{k+1} - 1}{B-1} + B^k n_0} - 2n + \frac{B^{k+1} - 1}{B-1} + B^k n_0 \\
    = &  \frac{nB^k n_0}{n - B^k} + \frac{ n\prt{n- \frac{B^{k+1} - 1}{B-1} - B^k n_0 }}{ \frac{B^{k+1} - 1}{B-1} + B^k n_0} \\
    \le & \frac{nB^k n_0}{n - B^k} +  \frac{n}{n-B^k}\prt{n- \frac{B^{k+1} - 1}{B-1} - B^k n_0 } \\
    = & n\frac{ n- \frac{B^{k+1} - 1}{B-1} }{n-B^k}  < n,
\end{aligned}
$$
which is a contradiction. Thus, we have $\lambda \le n$.
It follows that
$$
\frac{\tilde{\bx}^{\T} \Sigma \tilde{\bx}}{\|\bx\|^2} \le \frac{\tilde{\bx}^{\T} \Sigma \tilde{\bx}}{\|\tilde{\bx}\|^2}  \le \lambda_{\max}(\Sigma) \le n.
$$
From the fact that the square root function is monotonically increasing on $[0,\infty)$, we have
$$
 \|\cR^k - \frac{1}{n}\mone\bu^{\T}\|_2^2= \max_{\|\bx\|_2 = 1} \left\{ \|\prt{\cR^k - \frac{1}{n}\mone\bu^{\T}} \bx\|_2^2 \right\} \le n,
$$
which implies that $\|\cR^k - \frac{1}{n}\mone\bu^{\T}\|_2 \le \sqrt{n}$ for $k\le d-1$ and $\|\cR^k - \frac{1}{n}\mone\bu^{\T}\|_2 = 0$ otherwise by \cref{l.R0}. 
\end{proof}

The transformations described in \cref{l.pp1} below are straightforward.
\begin{corollary}
    \label{l.pp1}
    For any integer $m>0$, we have
    $$
    \begin{aligned}
        \bPi_{\bu}\cR & = \bPi_{\bu}\prt{\cR - \frac{1}{n}\mone\bu^{\T}} = \prt{\cR - \frac{1}{n}\mone\bu^{\T}} \bPi_{\bu}, \\
        \bPi_{\bu}\cR^m & = \bPi_{\bu}\prt{\cR^m - \frac{1}{n}\mone\bu^{\T}} = \bPi_{\bu}\prt{\cR - \frac{1}{n}\mone\bu^{\T}}^m = \prt{\cR - \frac{1}{n}\mone\bu^{\T}}^m \bPi_{\bu}.
    \end{aligned}
    $$
\end{corollary}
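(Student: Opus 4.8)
The plan is to reduce everything to the algebra of the rank-one projection $P := \frac{1}{n}\mone\bu^{\T}$, so that $\bPi_{\bu} = \bI - P$. First I would record three elementary facts. Since $\cR$ is row-stochastic we have $\cR\mone = \mone$; since $\bu^{\T}$ is the left eigenvector of $\cR$ for eigenvalue $1$ we have $\bu^{\T}\cR = \bu^{\T}$; and by normalization $\bu^{\T}\mone = n$. Combining these yields the three identities that drive the entire argument:
$$
P^2 = \frac{1}{n^2}\mone(\bu^{\T}\mone)\bu^{\T} = P, \quad \cR P = \frac{1}{n}(\cR\mone)\bu^{\T} = P, \quad P\cR = \frac{1}{n}\mone(\bu^{\T}\cR) = P.
$$
Thus $P$ is an idempotent that is absorbed by $\cR$ from either side.

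With these in hand, the first line follows by direct expansion: $\bPi_{\bu}\cR = (\bI - P)\cR = \cR - P\cR = \cR - P = \cR - \frac{1}{n}\mone\bu^{\T}$, and one checks identically that $(\bI - P)(\cR - P)$ and $(\cR - P)(\bI - P)$ both collapse to $\cR - P$ once the cross terms are simplified via $P^2 = P$ together with $P\cR = \cR P = P$.

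For the second line, the key reduction I would establish first is the commuting-projection identity $(\cR - P)^m = \cR^m - P$ for every $m \ge 1$, by a one-line induction: assuming it for $m$, multiply by $(\cR - P)$ and use $\cR P = P\cR = P$ and $P^2 = P$ so that every cross term collapses to a single $-P$. Iterating $\cR P = P$ and $P\cR = P$ likewise gives $\cR^m P = P\cR^m = P$. Each of the four expressions in the second line then simplifies to $\cR^m - P$: I would expand $(\bI - P)\cR^m$, $(\bI - P)(\cR^m - P)$, $(\bI - P)(\cR - P)^m$, and $(\cR - P)^m(\bI - P)$ in turn, substituting the identity $(\cR - P)^m = \cR^m - P$ in the last two.

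I do not anticipate a genuine obstacle, as the content lies entirely in the absorption identities $P\cR = \cR P = P^2 = P$. The only point requiring care is bookkeeping the cancellation of cross terms, all of which reduce to $\pm P$, and confirming that the inductive identity $(\cR - P)^m = \cR^m - P$ is exactly what licenses replacing the $m$-th power of $\cR - P$ by $\cR^m - P$ in the final two equalities.
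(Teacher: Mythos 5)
Your proof is correct: the absorption identities $P^2=P$, $\cR P=P$, and $P\cR=P$ (the latter two from row-stochasticity of $\cR$ and $\bu^{\T}\cR=\bu^{\T}$) do yield all the stated equalities, and the induction $(\cR-P)^m=\cR^m-P$ is exactly the right bridge for the second line. The paper omits the proof entirely, labeling the corollary ``straightforward,'' and your argument is the standard one that fills that gap.
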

To simplify the convergence analysis, we introduce the matrix $\bA_i$ defined as follows:
$$ 
\bA_i = \cC^{i} - \cC^{i-1}
$$
for $i=1,2,\cdots, d$. Specifically, $\bA_1 = \cC- \bI$. Consequently, \cref{l.Ai} below can be directly deduced from \cref{l.R} and \cref{l.pp1}.
\begin{corollary}
    \label{l.Ai}
    For $i = 1,\cdots, d$, we have
    $$
    \prt{\frac{\bu^{\T}}{n}-\mone^{\T}} \bA_i = \left\{ \begin{aligned}
        \bfe_{\frac{B^{i}-1}{B-1} + 1:\frac{B^{i+1}-1}{B-1}}^{\T} & \quad i\le d-1 \\
        \bfe_{\frac{B^{d}-1}{B-1} + 1:n}^{\T} & \quad i = d
    \end{aligned}\right. 
    $$
\end{corollary}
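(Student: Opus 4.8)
The plan is to exploit the transpose relationship $\cC = \cR^{\T}$ together with the closed form $\cR^k = \bZ_k$ from \cref{l.R}, which reduces the claim to reading off the first row of $\cC^i$ and invoking the column-stochasticity of $\cC$. The guiding observation is that $\frac{\bu^{\T}}{n} - \mone^{\T}$ acts very simply on column-stochastic matrices, so the whole computation is essentially an application of the structure already established.

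First I would note that since $\bu = \brk{n,0,\cdots,0}^{\T}$, we have $\frac{\bu^{\T}}{n} = \bfe_1^{\T}$, the indicator row of index $1$. Hence $\frac{\bu^{\T}}{n}\cC^i$ simply extracts the first row of $\cC^i = \prt{\cR^i}^{\T} = \bZ_i^{\T}$, which equals the first column of $\bZ_i$, namely $\bfe_{\cI_{1,i}} = \bfe_{1:\frac{B^{i+1}-1}{B-1}}$ by the definition of $\bZ_i$ (valid whenever $\frac{B^{i+1}-1}{B-1}\le n$, i.e. for $i\le d-1$). On the other hand, since $\cC$ is column-stochastic, so is every power $\cC^i$, giving $\mone^{\T}\cC^i = \mone^{\T}$. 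Combining these two facts yields, for each $i\le d-1$,
$$
\prt{\frac{\bu^{\T}}{n} - \mone^{\T}}\cC^i = \bfe_{1:\frac{B^{i+1}-1}{B-1}}^{\T} - \mone^{\T} = -\bfe_{\frac{B^{i+1}-1}{B-1}+1:n}^{\T}.
$$

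Next I would substitute this into $\bA_i = \cC^i - \cC^{i-1}$. Telescoping the two resulting indicator rows and using that $\crk{\frac{B^i-1}{B-1}+1:n}\supseteq\crk{\frac{B^{i+1}-1}{B-1}+1:n}$, the difference collapses to the indicator of the intermediate block $\crk{\frac{B^i-1}{B-1}+1:\frac{B^{i+1}-1}{B-1}}$, which is exactly the asserted vector $\bfe_{\frac{B^i-1}{B-1}+1:\frac{B^{i+1}-1}{B-1}}^{\T}$, settling the first branch.

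The only delicate point — and the step I expect to require the most care — is the boundary case $i=d$, where $\frac{B^{d+1}-1}{B-1}$ may exceed $n$ and the clean formula for the first row of $\cC^i$ no longer applies. Here I would instead invoke \cref{l.R0}: for $k=d$ the first column of $\cR^d$ is the all-ones vector, so the first row of $\cC^d$ equals $\mone^{\T}$, whence $\prt{\frac{\bu^{\T}}{n}-\mone^{\T}}\cC^d = \mone^{\T}-\mone^{\T} = \mathbf{0}^{\T}$. Subtracting the $i=d-1$ computation then gives $\prt{\frac{\bu^{\T}}{n}-\mone^{\T}}\bA_d = \bfe_{\frac{B^d-1}{B-1}+1:n}^{\T}$, matching the second branch of the claim and completing the argument.
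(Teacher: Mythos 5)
Your proof is correct and follows essentially the route the paper intends: the paper gives no explicit argument, stating only that the corollary "can be directly deduced from" \cref{l.R} (and \cref{l.pp1}), and your derivation — reading off the first row of $\cC^i=\bZ_i^{\T}$, using column-stochasticity for $\mone^{\T}\cC^i$, telescoping the nested index sets, and handling $i=d$ via \cref{l.R0} — is precisely that deduction made explicit.
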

Intuitively, \cref{l.Ai} illustrates that $\left(\frac{\bu^{\T}}{n}-\mone^{\T}\right) \bA_i$ serves as an indicator vector representing the $(i+1)$-th layer of the graph.

\subsection{Supporting Inequalities and Lemmas}
\cref{l.sum_matrix} and \cref{l.matrix} below are frequently employed for bounding the norms of matrix summations and multiplications. Their proofs rely on the Cauchy-Schwartz inequality and the definitions of matrix norms $\|\cdot\|_2$ and $\|\cdot\|_F$.
\begin{lemma}
    \label{l.sum_matrix}
    For an arbitrary set of $m$ matrices $\{ \bA_i\}_{i=1}^m$ with the same size, we have
    $$
    \norm{ \sum_{i=1}^m \bA_i}_F^2 \le m \sum_{i=1}^m  \norm{\bA_i}_F^2.
    $$
\end{lemma}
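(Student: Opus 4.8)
The plan is to recognize that this is the standard ``sum of $m$ vectors'' quadratic inequality, specialized to the Frobenius norm, which makes the Frobenius structure essentially a red herring: since $\norm{\cdot}_F$ is precisely the Euclidean norm on the vectorized matrix space $\reals^{n\times p}\cong\reals^{np}$, it suffices to prove $\norm{\sum_{i=1}^m v_i}^2 \le m\sum_{i=1}^m \norm{v_i}^2$ for vectors $v_i$ in an inner product space, and then read the matrix statement off by identifying $v_i$ with $\bA_i$. I would therefore carry out the argument directly with the Frobenius inner product $\ip{\bA}{\bB}_F := \mathrm{tr}(\bA^\top \bB)$, for which $\norm{\bA}_F^2 = \ip{\bA}{\bA}_F$.

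The key step is to expand the square and control the cross terms. Writing
\[
\norm{\sum_{i=1}^m \bA_i}_F^2 = \sum_{i=1}^m\sum_{j=1}^m \ip{\bA_i}{\bA_j}_F,
\]
I would bound each off-diagonal inner product using the Cauchy-Schwarz inequality followed by the elementary arithmetic-geometric mean inequality, namely $\ip{\bA_i}{\bA_j}_F \le \norm{\bA_i}_F\norm{\bA_j}_F \le \tfrac{1}{2}\prt{\norm{\bA_i}_F^2 + \norm{\bA_j}_F^2}$. Summing this bound over all ordered pairs $(i,j)$ and using symmetry collapses the double sum: each term $\norm{\bA_i}_F^2$ appears exactly $m$ times overall (once for each fixed second index and once for each fixed first index, divided by the factor $\tfrac12$), yielding $m\sum_{i=1}^m \norm{\bA_i}_F^2$. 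This is exactly the claimed inequality.

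There is no genuine obstacle here; the only point requiring minor care is the bookkeeping when summing the AM-GM bound over all $m^2$ pairs to confirm the constant is precisely $m$ and not larger. As an equally clean alternative I could invoke convexity of $t\mapsto \norm{t}_F^2$ via Jensen's inequality, giving $\norm{\tfrac{1}{m}\sum_{i=1}^m \bA_i}_F^2 \le \tfrac{1}{m}\sum_{i=1}^m \norm{\bA_i}_F^2$ and then multiplying through by $m^2$; either route is a one-line computation, so I would present the Cauchy-Schwarz version since the lemma's preamble already advertises that proof technique.
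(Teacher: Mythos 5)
Your proof is correct, and the bookkeeping you flag as the only delicate point does work out: summing $\ip{\bA_i}{\bA_j}_F \le \tfrac{1}{2}\prt{\norm{\bA_i}_F^2+\norm{\bA_j}_F^2}$ over all $m^2$ ordered pairs (the diagonal terms also satisfy this bound, with equality) gives each $\norm{\bA_i}_F^2$ a total weight of exactly $m$, so the constant is sharp. The paper takes a slightly different and more compact route: it first applies the triangle inequality for the Frobenius norm, $\norm{\sum_i \bA_i}_F \le \sum_i \norm{\bA_i}_F$, and then applies the scalar Cauchy--Schwarz inequality $\prt{\sum_i a_i}^2 \le m\sum_i a_i^2$ to the nonnegative reals $a_i = \norm{\bA_i}_F$, never introducing the Frobenius inner product at all. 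The two arguments are equivalent in strength and both deliver the constant $m$; yours makes the cross-term structure explicit (which is sometimes useful if one later wants to exploit orthogonality of some of the $\bA_i$ to improve the constant, as the paper in fact does implicitly elsewhere, e.g.\ in the proof of \cref{l.X_diff} where a martingale-difference sum incurs no factor of $m$), while the paper's version is shorter and avoids any pair counting. Your Jensen alternative is also valid and is just a third packaging of the same convexity fact.
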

\begin{proof}
    By the definition of Frobenius norm, we have
    $$
    \norm{\sum_{i=1}^m \bA_i}_F \le \sum_{i=1}^m \norm{\bA_i}_F.
    $$
    Taking squares on both sides and invoking the Cauchy-Schwarz inequality, we have
    $$
    \norm{\sum_{i=1}^m \bA_i}_F^2 \le \prt{\sum_{i=1}^m \norm{\bA_i}_F}^2 \le m \sum_{i=1}^m \norm{\bA_i}_F^2.
    $$
\end{proof}
\begin{lemma}
    \label{l.matrix}
    Let $\bA$, $\bB$ be two real matrices whose sizes match. Then,
    $$
    \norm{\bA\bB}_F \le \norm{\bA}_2\norm{\bB}_F.
    $$
\end{lemma}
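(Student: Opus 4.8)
The plan is to reduce the matrix inequality to a collection of vector inequalities by decomposing $\bB$ into its columns. Writing $\bB = \brk{\bb_1, \bb_2, \ldots, \bb_q}$, the key observation is that the Frobenius norm splits column-wise as $\norm{\bB}_F^2 = \sum_{j} \norm{\bb_j}_2^2$, and since $\bA\bB = \brk{\bA\bb_1, \bA\bb_2, \ldots, \bA\bb_q}$, the same decomposition gives $\norm{\bA\bB}_F^2 = \sum_{j} \norm{\bA\bb_j}_2^2$.

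With this in hand, the heart of the argument is the variational (operator-norm) characterization of the spectral norm: for any vector $\bx$, one has $\norm{\bA\bx}_2 \le \norm{\bA}_2 \norm{\bx}_2$, since $\norm{\bA}_2 = \max_{\bx \neq 0} \norm{\bA\bx}_2 / \norm{\bx}_2$ by definition. First I would apply this bound to each column $\bb_j$, yielding $\norm{\bA\bb_j}_2^2 \le \norm{\bA}_2^2 \norm{\bb_j}_2^2$. Summing over $j$ and factoring out the constant $\norm{\bA}_2^2$ then gives
$$
\norm{\bA\bB}_F^2 = \sum_{j} \norm{\bA\bb_j}_2^2 \le \norm{\bA}_2^2 \sum_{j} \norm{\bb_j}_2^2 = \norm{\bA}_2^2 \norm{\bB}_F^2.
$$
Taking square roots (using monotonicity of the square root on $[0,\infty)$) completes the proof.

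This argument is entirely routine; there is no real obstacle, only the two standard facts that the Frobenius norm is additive over columns and that the spectral norm dominates the column-wise Euclidean expansion ratio. If one prefers an alternative route, the same conclusion follows from a singular-value-decomposition argument or directly from the submultiplicativity $\norm{\bA\bB}_F \le \norm{\bA}_2 \norm{\bB}_F$ viewed through the trace identity $\norm{\bA\bB}_F^2 = \mathrm{tr}(\bB^\T \bA^\T \bA \bB)$ together with $\bA^\T \bA \preceq \norm{\bA}_2^2 \bI$, but the column decomposition is the cleanest and most self-contained.
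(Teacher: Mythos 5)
Your proof is correct, and it takes a different route from the paper's. You decompose $\bB$ into columns and apply the operator-norm characterization $\norm{\bA\bx}_2 \le \norm{\bA}_2\norm{\bx}_2$ to each column, needing only the definition of $\norm{\cdot}_2$ as a maximal expansion ratio and the column-wise additivity of $\norm{\cdot}_F^2$. The paper instead writes the singular value decomposition $\bA = \bU\Sigma\bV^H$, passes through the trace identity $\norm{\bM}_F^2 = \mathrm{trace}(\bM^H\bM)$ and unitary invariance to reduce to $\norm{\Sigma\bV^H\bB}_F^2 \le \sigma_{\max}^2\norm{\bV^H\bB}_F^2 = \sigma_{\max}^2\norm{\bB}_F^2$ --- essentially the trace/SVD alternative you mention in your closing remark. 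Your column argument is the more elementary and self-contained of the two, since it does not invoke the existence of the SVD or unitary invariance of the Frobenius norm; the paper's version makes the role of the largest singular value explicit but at the cost of more machinery. Both establish the identical bound, and nothing is gained or lost in generality either way.
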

\begin{proof}
    Let $\bA = \bU\Sigma\bV^H$ be the singular value decomposition of $\bA$, with the largest singular value $\sigma_{\max}$ and hence $\norm{\bA}_2 = \sigma_{\max}$. Then, we have
    $$
    \begin{aligned}
        \norm{\bA\bB}_F^2 & = \norm{\bU\Sigma\bV^H\bB}_F^2 = \text{trace}\prt{\prt{ \bU\Sigma\bV^H\bB}^H \prt{\bU\Sigma\bV^H\bB}}\\
        & = \text{trace}\prt{\prt{ \Sigma\bV^H\bB}^H \prt{\Sigma\bV^H\bB}} = \norm{ \Sigma\bV^H\bB}_F^2 \\
        & \le \sigma_{\max}^2 \| \bV^H\bB\|_F^2 = \sigma_{\max}^2 \text{trace}\prt{\bB^{\T}\bV\bV^H\bB} \\
        & = \sigma_{\max}^2 \text{trace}\prt{\bB^{\T}\bB} = \sigma_{\max}^2 \norm{\bB}_F^2 \\
        & = \norm{\bA}_2^2 \norm{\bB}_F^2,
    \end{aligned}
    $$
    which implies the desired result.
\end{proof}
 Lemma \ref{l.martin} below will be used in the last step for deriving the convergence rate of BTPP.
\begin{lemma}
    \label{l.martin}
    Let $A,B,C$ and $\alpha$ be positive constants and $T$ be a positive integer. Define function
    $$
    g(\gamma) = \frac{A}{\gamma (T+1)} + B\gamma + C\gamma^2.
    $$
    Then,
    $$
    \inf_{\gamma \in (0,\frac{1}{\alpha}]} g(\gamma) \le 2\prt{\frac{AB}{T+1}}^{\frac{1}{2}} + 2C^{\frac{1}{3}} \prt{\frac{A}{T+1}}^{\frac{2}{3}} + \frac{\alpha A}{T+1},
    $$
    where the upper bound can be achieved by choosing $\gamma = \min\crk{\prt{\frac{A}{B(T+1)}}^{\frac{1}{2}} , \prt{\frac{A}{C(T+1)}}^{\frac{1}{3}},\frac{1}{\alpha}}$.
\end{lemma}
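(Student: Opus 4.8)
The plan is to exhibit the stated stepsize as a feasible point and then bound $g$ at that point term by term. Write $\gamma_1 := \prt{\frac{A}{B(T+1)}}^{\frac12}$, $\gamma_2 := \prt{\frac{A}{C(T+1)}}^{\frac13}$, and $\gamma_3 := \frac{1}{\alpha}$, so that the proposed minimizer is $\gamma^{\ast} = \min\crk{\gamma_1,\gamma_2,\gamma_3}$. Since $A,B,C,\alpha$ are positive, all three candidates are positive, and $\gamma^{\ast}\le\gamma_3=\frac1\alpha$, so $\gamma^{\ast}\in(0,\frac1\alpha]$ and hence $\inf_{\gamma\in(0,1/\alpha]} g(\gamma)\le g(\gamma^{\ast})$. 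It then remains only to estimate the three summands $\frac{A}{\gamma^{\ast}(T+1)}$, $B\gamma^{\ast}$, and $C(\gamma^{\ast})^2$ separately.

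The two increasing terms are handled by direct substitution. Using $\gamma^{\ast}\le\gamma_1$ gives $B\gamma^{\ast}\le B\gamma_1 = \prt{\frac{AB}{T+1}}^{\frac12}$, and using $\gamma^{\ast}\le\gamma_2$ gives $C(\gamma^{\ast})^2\le C\gamma_2^2 = C^{\frac13}\prt{\frac{A}{T+1}}^{\frac23}$; each reduces to collecting the exponents of $A,B,C,(T+1)$ after plugging in the closed forms of $\gamma_1,\gamma_2$.

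The one step that needs a small idea is the decreasing term $\frac{A}{\gamma^{\ast}(T+1)}$, since evaluating it at the smallest candidate makes it largest. The trick is that the reciprocal of a minimum is the maximum of the reciprocals, hence bounded by their sum: $\frac{1}{\gamma^{\ast}} = \max\crk{\frac{1}{\gamma_1},\frac{1}{\gamma_2},\frac{1}{\gamma_3}}\le \frac{1}{\gamma_1}+\frac{1}{\gamma_2}+\frac{1}{\gamma_3}$. Multiplying by $\frac{A}{T+1}$ and substituting yields $\frac{A}{T+1}\cdot\frac{1}{\gamma_1}=\prt{\frac{AB}{T+1}}^{\frac12}$, $\frac{A}{T+1}\cdot\frac{1}{\gamma_2}=C^{\frac13}\prt{\frac{A}{T+1}}^{\frac23}$, and $\frac{A}{T+1}\cdot\frac{1}{\gamma_3}=\frac{\alpha A}{T+1}$.

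Finally I would add the three bounds. The decreasing term contributes one copy each of $\prt{\frac{AB}{T+1}}^{\frac12}$, $C^{\frac13}\prt{\frac{A}{T+1}}^{\frac23}$, and $\frac{\alpha A}{T+1}$, while the two increasing terms supply a second copy of the first two, giving precisely $2\prt{\frac{AB}{T+1}}^{\frac12}+2C^{\frac13}\prt{\frac{A}{T+1}}^{\frac23}+\frac{\alpha A}{T+1}$, as claimed. There is no genuine analytic obstacle here; the whole argument is a chain of substitutions, and the only point requiring any care is the reciprocal-of-a-minimum estimate for the decreasing term, which is exactly what produces the factor $2$ in front of the first two terms of the bound.
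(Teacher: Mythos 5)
Your proof is correct and complete. The paper itself does not prove this lemma --- it simply defers to Lemma~26 in the cited reference (Koloskova et al.) --- so there is no internal argument to compare against; your chain of substitutions, with the reciprocal-of-a-minimum bound $\frac{1}{\gamma^{\ast}}\le \frac{1}{\gamma_1}+\frac{1}{\gamma_2}+\frac{1}{\gamma_3}$ producing the factors of $2$, is exactly the standard argument behind that cited result, so you have in effect supplied the details the paper omits.
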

\begin{proof}
    See Lemma 26 in \cite{koloskova2021improved} for a reference.
\end{proof}

 Lemma \ref{lem:independ_help} is a technical result related to random variables.
\begin{lemma}
    \label{lem:independ_help}
    Consider three random variables $X$, $Y$, and $Z$. Assume that $Z$ is independent with $(X, Y)$. Let $h$ and $g$ be functions such that the conditional expectation $\mathbb{E}[g(Y, Z) \mid Y] = 0$. We have
    $$
    \expect \prt{h(X) g(Y,Z)} = 0.
    $$
\end{lemma}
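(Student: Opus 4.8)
The plan is to evaluate the expectation by conditioning on the pair $(X,Y)$ and then exploiting the independence of $Z$ to reduce this conditioning to $Y$ alone. First I would apply the tower property of conditional expectation,
$$
\expect\prt{h(X)g(Y,Z)} = \expect\brk{\expect\brk{h(X)g(Y,Z)\mid X,Y}}.
$$
Since $h(X)$ is measurable with respect to the $\sigma$-algebra generated by $(X,Y)$, it can be pulled outside the inner conditional expectation, yielding
$$
\expect\brk{h(X)g(Y,Z)\mid X,Y} = h(X)\,\expect\brk{g(Y,Z)\mid X,Y}.
$$

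The crux of the argument is to show that $\expect\brk{g(Y,Z)\mid X,Y} = \expect\brk{g(Y,Z)\mid Y}$. Because $Z$ is independent of the pair $(X,Y)$, the conditional law of $Z$ given $(X,Y)=(x,y)$ coincides with the marginal law of $Z$; likewise, since $Z\perp Y$ follows from $Z\perp(X,Y)$, the conditional law of $Z$ given $Y=y$ is the same marginal law. Consequently, for the slice function $z\mapsto g(y,z)$ with $y$ fixed, both conditional expectations equal $\int g(y,z)\,dP_Z(z)$, so they agree almost surely. I expect this to be the main obstacle: one must carefully justify that integrating out $Z$ produces a quantity depending only on $y$ and not on $x$, which is precisely where the independence hypothesis is used rather than a weaker conditional-independence statement.

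Finally, I would substitute the hypothesis $\expect\brk{g(Y,Z)\mid Y}=0$, so that the inner conditional expectation vanishes almost surely, and conclude
$$
\expect\prt{h(X)g(Y,Z)} = \expect\brk{h(X)\cdot 0} = 0.
$$
A minor technical point to record is that all expectations are assumed to exist (e.g., $h(X)g(Y,Z)$ is integrable), which is what licenses both the tower property and the extraction of the measurable factor $h(X)$ from the inner conditional expectation.
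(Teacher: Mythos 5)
Your proof is correct and follows essentially the same route as the paper's: both condition on $(X,Y)$, pull out $h(X)$, and reduce $\expect[g(Y,Z)\mid X,Y]$ to $\expect[g(Y,Z)\mid Y]$ via the freezing/substitution argument that the independence of $Z$ from $(X,Y)$ makes $\expect[g(y,Z)]$ a function of $y$ alone (the paper cites Resnick (10.17) for exactly this step). The only cosmetic difference is that the paper wraps an additional outer conditioning on $Y$ around the computation before applying the tower rule, which your more direct version avoids.
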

\begin{proof}
    It implies by the condition $Z\perp \!\!\! \perp (X,Y)$ that $\sigma(Z)\perp \!\!\!\perp \sigma(X,Y)$. Then,
    $$
    \begin{aligned}
        \expect \brk{ h(X) g(Y,Z)|Y} & = \expect \crk{ \expect\brk{h(X) g(Y,Z)|X,Y}|Y} \\
        & = \expect \crk{ h(X) \expect\brk{g(Y,Z)|X,Y}|Y}. \\ 
    \end{aligned}
    $$
    It suffices to show
    $$
    \expect\brk{g(Y,Z)|X,Y} = \expect\brk{g(Y,Z)|Y} (= 0).
    $$
    Let $f_g(y) = \expect\prt{g(y,Z)}$. Since $\sigma(Z)\perp \!\!\!\perp \sigma(X,Y)$, we have $\sigma(Z)\perp \!\!\!\perp \sigma(Y)$. Then,
    $$
    f_g(Y) = \expect\prt{g(Y,Z)|Y} = \expect\brk{g(Y,Z)|X,Y} ,
    $$
    which follows directly from (10.17) in \cite{resnick2019probability}.
    Thus, by the Tower Rule, we reach the statement as follows:
    $$
    \expect \prt{h(X) g(Y,Z)} = \expect\crk{\expect \prt{h(X) g(Y,Z)|Y} } = 0.
    $$
\end{proof}

\subsection{Proofs of Key Lemmas}

\label{pf.thm:convergence}
In this section, we prove several key lemmas for proving the main convergence result of BTPP.

\subsubsection{Preparation}
\cref{alg:pp}, as encapsulated by the equations in \eqref{eq:ppSGD}, can be succinctly expressed in the following matrix form:
\begin{equation}
    \label{eq:matrixpp}
    \left( \begin{array}{c}
         \bX^{(t+1)}\\
        \bY^{(t+1)}  
    \end{array}\right) = 
    \left( \begin{array}{cc}
         \cR & -\gamma \cR\\
        \mathbf{0} & \cC  
    \end{array}\right)\left( \begin{array}{c}
         \bX^{(t)}\\
        \bY^{(t)}  
    \end{array}\right) + 
    \left( \begin{array}{c}
         \mathbf{0}\\
        \bG^{(t+1)} -\bG^{(t)} 
    \end{array}\right).
\end{equation}
By repeatedly applying equation \eqref{eq:matrixpp} starting from time step $t$ and going back to time step $0$, we arrive at the following relation:
$$
\begin{aligned}
    \left( \begin{array}{c}
         \bX^{(t)}\\
        \bY^{(t)}  
    \end{array}\right) 
    & = \left( \begin{array}{cc}
         \cR & -\gamma \cR\\
        \mathbf{0} & \cC  
    \end{array}\right)^{t}\left( \begin{array}{c}
         \bX^{(0)}\\
        \bY^{(0)}  
    \end{array}\right) + 
    \sum_{m=0}^{t-1}
    \left( \begin{array}{cc}
         \cR & -\gamma \cR\\
        \mathbf{0} & \cC  
    \end{array}\right)^{t-m-1}\left( \begin{array}{c}
         \mathbf{0}\\
        \bG^{(m+1)} -\bG^{(m)} 
    \end{array}\right).
\end{aligned}
$$
For the sake of clarity, we start with introducing some simple definitions. Any matrix raised to the power of 0 is defined as the identity matrix $\bI$, which matches the original matrix in dimension. The only exceptions are $\prt{\cR - \frac{1}{n}\mone\bu^{\T}}^0 := \bPi_{\bu}$ and $\prt{\cC - \frac{1}{n}\bv\mone^{\T}}^0 := \bPi_{\bv}$ for convenience. Furthermore, we introduce the following terms:
$$
\Bar{\bX}^{(t)} := \frac{1}{n}\mone\bu^{\T}\bX^{(t)},\ \Bar{\bY}^{(t)}:= \frac{1}{n}\bv\mone^{\T}\bY^{(t)}.
$$
Note that, for any given integer $m>0$, 
$$
\left( \begin{array}{cc}
         \cR & -\gamma \cR\\
        \mathbf{0} & \cC  
    \end{array}\right)^m = \left( \begin{array}{cc}
         \cR^m & -\gamma \sum_{j=1}^{m}\cR^j\cC^{m-j}\\
        \mathbf{0} & \cC^m  
    \end{array}\right).
$$
As a result, given the initial condition $\bY^{(0)} = \bG^{(0)}$, we can deduce the outcomes of $\bX^{(t)}$ and $\bY^{(t)}$ as follows.
\begin{align}
    \bX^{(t)} & = \cR^{t}\bX^{(0)} - \gamma \sum_{m=0}^{t-2} \sum_{j=1}^{t-m-1}\cR^j\cC^{t-m-1-j} \brk{\bG^{(m+1)} - \bG^{(m)}}  -\gamma \sum_{j=1}^{t}\cR^j\cC^{t-j} \bG^{(0)}, \label{eq:pp1} \\
    \bY^{(t)} & = \sum_{m=0}^{t-1} \cC^{t-m-1} \brk{\bG^{(m+1)} - \bG^{(m)}} + \cC^{t} \bG^{(0)}. \label{eq:pp2} 
\end{align}
Then, after multiplying $\bPi_{\bu}$ and $\bPi_{\bv}$ to equation \eqref{eq:pp1} and equation \eqref{eq:pp2} respectively, and invoking \cref{l.pp1}, we have
\begin{align}
    \begin{split}
        \bPi_{\bu}\bX^{(t)} & = \prt{\cR - \frac{1}{n}\mone\bu^{\T}}^{t}\bX^{(0)} - \gamma \sum_{j=1}^{\min\crk{d-1,t}}\prt{\cR  - \frac{1}{n}\mone\bu^{\T}}^j\cC^{t-j} \bG^{(0)}\\
        &\quad - \gamma \sum_{m=0}^{t-2} \sum_{j=1}^{\min\crk{t-m-1, d - 1}}\prt{\cR - \frac{1}{n}\mone\bu^{\T} }^j\cC^{t-m-1-j} \brk{\bG^{(m+1)} - \bG^{(m)}},
      \end{split} \label{eq:pp11} \\
    \begin{split}
        \bPi_{\bv}\bY^{(t)} & = \sum_{m=\max\crk{0,t-d}}^{t-1} \prt{\cC - \frac{1}{n}\bv\mone^{\T}}^{t-m-1} \brk{\bG^{(m+1)} - \bG^{(m)}} + \prt{\cC - \frac{1}{n}\bv\mone^{\T}}^{t} \bG^{(0)} \\
        % & = \prt{\cC - \frac{1}{n}\bv\mone^{\T}}^{\T} \bG^{(0)} + \sum_{m=1}^{d} \bA_{m}\bG^{(t-m)} + \bPi_{\bv}\bG^{(t)} \\
        & = \sum_{m=0}^{ \min\{t,d\} - 1} \prt{\cC - \frac{1}{n}\bv\mone^{\T}}^{m} \prt{\bG^{(t-m)} - \bG^{(t-m-1)}} + \prt{\cC - \frac{1}{n}\bv\mone^{\T}}^{t} \bG^{(0)} \\
        & = \sum_{m=1}^{\min\{t,d\}}\bA_m \bG^{(t-m)} + \bPi_{\bv}\bG^{(t)}.
    \end{split}
    \label{eq:pp22} 
\end{align}

\subsubsection{Analysis of the Variance}
% Next, we introduce some supporting lemmas that will be used later to prove the main result. 
Denote by $\cF_t$ the $\sigma$-algebra generated by $\{\bxi_0, \cdots, \bxi_{t-1}\}$, and define $\expect\brk{\cdot|\cF_t}$ as the conditional expectation given $\cF_{t}$. \cref{l.bound_var} provides an estimate for the variance of the gradient estimator $G(\bX^{(t)}, \bxi^{(t)})$. 
\begin{lemma}
    \label{l.bound_var}
    Under \cref{a.var}, for any given power $k\le d-1$, we have for all $t\ge 0$ that
    $$
    \begin{aligned}
        \expect \brk{\norm{\prt{ \cC - \frac{1}{n}\bv\mone^{\T}}^k \prt{\bG(\bX^{(t)}, \bxi^{(t)}) - \nabla\bF(\bX^{(t)})} }_F^2 \mid \cF_t } 
        \le 2n\sigma^2.
    \end{aligned}
    $$
\end{lemma}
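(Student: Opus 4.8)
The plan is to avoid the naive bound $\norm{\bW^k\bE}_F\le\norm{\bW^k}_2\,\norm{\bE}_F$, where $\bW:=\cC-\frac{1}{n}\bv\mone^{\T}$ and $\bE:=\bG(\bX^{(t)},\bxi^{(t)})-\nabla\bF(\bX^{(t)})$: combining $\norm{\bW^k}_2\le\sqrt{n}$ from \cref{l.R0knorm} with $\expect[\norm{\bE}_F^2\mid\cF_t]\le n\sigma^2$ would only give the loose estimate $n^2\sigma^2$. Instead, I would exploit the conditional independence of the per-agent gradient noise across $i$ to reduce the quantity to the \emph{Frobenius} norm of $\bW^k$, which turns out to be $\cO(n)$ rather than $\cO(n^2)$.

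First I would denote the $i$-th row of $\bE$ by $e_i^{\T}$, so that $e_i=g_i(x_i^{(t)},\xi_i^{(t)})-\nabla f_i(x_i^{(t)})$. Since $\bX^{(t)}$ is $\cF_t$-measurable while each $\xi_i^{(t)}$ is drawn fresh and independently across $i$, \cref{a.var} yields $\expect[e_i\mid\cF_t]=0$, $\expect[\norm{e_i}^2\mid\cF_t]\le\sigma^2$, and the $\{e_i\}$ are conditionally independent given $\cF_t$. The $i$-th row of $\bW^k\bE$ equals $\sum_j(\bW^k)_{ij}e_j^{\T}$, so I would expand $\norm{\bW^k\bE}_F^2=\sum_i\norm{\sum_j(\bW^k)_{ij}e_j}^2$, take $\expect[\,\cdot\mid\cF_t]$, and use conditional independence together with zero mean to annihilate every cross term $\expect[\langle e_j,e_l\rangle\mid\cF_t]$ with $j\ne l$. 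This leaves $\expect[\norm{\bW^k\bE}_F^2\mid\cF_t]=\sum_{i,j}(\bW^k)_{ij}^2\,\expect[\norm{e_j}^2\mid\cF_t]\le\sigma^2\,\norm{\bW^k}_F^2$.

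It then remains to show $\norm{\bW^k}_F^2\le 2n$ for $k\le d-1$. Because $\cC=\cR^{\T}$ and $\bv=\bu$, one has $\bW=\prt{\cR-\frac{1}{n}\mone\bu^{\T}}^{\T}$; and since $\frac{1}{n}\mone\bu^{\T}$ is idempotent and satisfies $\cR\prt{\frac{1}{n}\mone\bu^{\T}}=\prt{\frac{1}{n}\mone\bu^{\T}}\cR=\frac{1}{n}\mone\bu^{\T}$ (using $\cR\mone=\mone$ and $\bu^{\T}\cR=\bu^{\T}$), the identity $\prt{\cR-\frac{1}{n}\mone\bu^{\T}}^k=\cR^k-\frac{1}{n}\mone\bu^{\T}$ holds. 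Transpose-invariance of $\norm{\cdot}_F$ then gives $\norm{\bW^k}_F^2=\norm{\cR^k-\frac{1}{n}\mone\bu^{\T}}_F^2$, which I would compute exactly from the closed forms in \cref{l.R} and \cref{l.R0}: all entries lie in $\{0,\pm1\}$, the first column has exactly $n-\frac{B^{k+1}-1}{B-1}$ nonzero ($-1$) entries, while columns $2,\dots,n$ reproduce the corresponding columns of the row-stochastic $0/1$ matrix $\cR^k$, which carry a total of $n-\frac{B^{k+1}-1}{B-1}$ ones (the full matrix has $n$ ones, of which $\frac{B^{k+1}-1}{B-1}$ sit in column $1$). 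Hence $\norm{\cR^k-\frac{1}{n}\mone\bu^{\T}}_F^2=2\bigl(n-\frac{B^{k+1}-1}{B-1}\bigr)\le 2n$, and combining with the previous display delivers the bound $2n\sigma^2$.

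The main obstacle is obtaining the sharp constant $2n$: the decisive move is replacing the operator-norm estimate by the row-wise variance decomposition, which converts $\norm{\bW^k}_2^2\le n$ into the tighter $\norm{\bW^k}_F^2\le 2n$ and thereby prevents a spurious extra factor of $n$ from propagating into the final transient-time estimate. Everything else reduces to careful bookkeeping of the $0/1$ structure of $\cR^k$.
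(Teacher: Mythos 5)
Your proposal is correct and follows essentially the same route as the paper: the paper likewise uses the conditional independence and zero mean of the per-agent noise (its Lemma on indicator vectors, $\expect\|\bfe_{\cI}^{\T}(\bG^{(t)}-\nabla\bF(\bX^{(t)}))\|_2^2\le|\cI|\sigma^2$) to reduce the bound to counting the nonzero entries of $\prt{\cC-\frac{1}{n}\bv\mone^{\T}}^{k}$, arriving at the same $2\prt{n-\frac{B^{k+1}-1}{B-1}}\sigma^2\le 2n\sigma^2$. Your phrasing via $\expect\brk{\norm{\bW^k\bE}_F^2\mid\cF_t}\le\sigma^2\norm{\bW^k}_F^2$ is a slightly more general packaging of the identical variance-decomposition idea.
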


\label{pf.l.bound_var}
\begin{proof}
    For any given $t$ and $i\ne j$, due to the independently drawn sample $\xi_i^{(t)}$, we have that $\xi_i^{(t)}$ is independent of $(\cF_t, \xi_j^{(t)})$, and thus $\xi_i^{(t)}$ is independent of $\sigma(x_i^{(t)}, x_j^{(t)}, \xi_j^{(t)})$. Hence, invoking \cref{lem:independ_help} and \cref{a.var} yields
    $$
    \begin{aligned}
    & \expect\brk{ \nabla F(x_i^{(t)};\xi_i^{(t)}) - \nabla f_i(x_i^{(t)}) \bigg| x_i^{(t)}} = \expect\brk{ \nabla F(x_i^{(t)};\xi_i^{(t)}) - \nabla f_i(x_i^{(t)}) \bigg|  \cF_t } = 0,\\
    &    \expect \left\langle \nabla F(x_i^{(t)};\xi_i^{(t)}) - \nabla f_i(x_i^{(t)}), \nabla F(x_j^{(t)};\xi_j^{(t)}) - \nabla f_j(x_j^{(t)}) \right\rangle = 0.
    \end{aligned}
    $$
Then, for any index set $\cI \subseteq \{1,2,\cdots,n\}$, we have $\expect \|\bfe_{\cI}^{\T} \prt{\bG^{(t)} - \nabla \bF(\bX^{(t)})}\|_2^2 \le |\cI| \sigma^2$.

 Notice that
$$
\begin{aligned}
    \norm{\prt{ \cC - \frac{1}{n}\bv\mone^{\T}}^k \prt{\bG^{(t)} - \nabla\bF(\bX^{(t)})} }_F^2 = & \norm{ \bfe_{\frac{B^{k+1}-1}{B-1}+1:n} \prt{\bG^{(t)} - \nabla \bF(\bX^{(t)})}}_2^2 \\
    & + \sum_{i=2}^n\norm{ \bfe_{\cI_{i,k}}\prt{\bG^{(t)} - \nabla \bF(\bX^{(t)})}}_2^2.
\end{aligned}
$$
Thus, we obtain the desired result by invoking \cref{l.R} and \cref{l.R0} after taking expectation on both sides of the above relation:
$$
\expect \norm{\prt{ \cC - \frac{1}{n}\bv\mone^{\T}}^j \prt{\bG^{(t)} - \nabla\bF(\bX^{(t)})} }_F^2 \le  2\prt{n-\frac{B^{k+1}-1}{B-1}} \sigma^2 \le 2n\sigma^2.
$$
\end{proof}
Under \cref{a.var} and the randomly selected samples, \cref{l.bound_var} and \cref{co:Ai} below provide an initial estimation for the variance terms. 
The proof of \cref{co:Ai} is directly from the analysis in \cref{pf.l.bound_var} and \cref{l.Ai}.
\begin{corollary}
    \label{co:Ai}
    Under \cref{a.var}, we have for all $t\ge 0$ that
    $$
    \sum_{k=1}^{d}\expect\|\prt{\frac{\bu^{\T}}{n}-\mone^{\T}} A_k \prt{\bG(\bX^{(t)}, \bxi^{(t)}) - \nabla \bF(\bX^{(t)})}\|_2^2 \le (n-1)\sigma^2.
    $$
\end{corollary}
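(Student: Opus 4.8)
The plan is to combine \cref{l.Ai}, which gives an explicit closed form for each row vector $\prt{\frac{\bu^{\T}}{n}-\mone^{\T}} \bA_k$, with the per-coordinate variance estimate established in the proof of \cref{l.bound_var}.

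First I would invoke \cref{l.Ai} to rewrite each factor $\prt{\frac{\bu^{\T}}{n}-\mone^{\T}} \bA_k$ as an indicator row vector $\bfe_{\cJ_k}^{\T}$, where $\cJ_k := \crk{\frac{B^k-1}{B-1}+1 : \frac{B^{k+1}-1}{B-1}}$ for $k\le d-1$ and $\cJ_d := \crk{\frac{B^d-1}{B-1}+1 : n}$. This reduces the $k$-th summand to $\expect\norm{\bfe_{\cJ_k}^{\T}\prt{\bG^{(t)} - \nabla\bF(\bX^{(t)})}}_2^2$, so the whole sum becomes $\sum_{k=1}^d \expect\norm{\bfe_{\cJ_k}^{\T}\prt{\bG^{(t)} - \nabla\bF(\bX^{(t)})}}_2^2$.

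Next I would record the counting fact underlying the bound. Since $\cJ_k$ is precisely the set of node indices at depth $k$ of the B-ary tree, the sets $\crk{\cJ_k}_{k=1}^{d}$ are pairwise disjoint and tile $\crk{2,\ldots,n}$: the boundaries $\frac{B^{k+1}-1}{B-1}$ telescope so that $\cJ_k$ ends exactly one index before $\cJ_{k+1}$ begins, and the (possibly incomplete) last layer $\cJ_d$ absorbs all remaining indices up to $n$. A direct count gives $|\cJ_k|=B^k$ for $k\le d-1$ and $|\cJ_d|=n-\frac{B^d-1}{B-1}$, hence $\sum_{k=1}^{d}|\cJ_k|=n-1$.

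Finally I would apply the intermediate inequality from the proof of \cref{l.bound_var}: for any index set $\cI$, the unbiasedness of each $g_i$, the independence of the samples $\xi_i^{(t)}$ across agents (so that the cross terms vanish in expectation via \cref{lem:independ_help}), and the variance bound in \cref{a.var} jointly yield $\expect\norm{\bfe_{\cI}^{\T}\prt{\bG^{(t)}-\nabla\bF(\bX^{(t)})}}_2^2 \le |\cI|\sigma^2$. Specializing to $\cI=\cJ_k$ and summing over $k$ gives $\prt{\sum_{k=1}^d|\cJ_k|}\sigma^2 = (n-1)\sigma^2$, as claimed. The proof is essentially bookkeeping once \cref{l.Ai} is available; the only step demanding care is verifying the disjoint-partition property together with the treatment of the truncated last layer, so that the layer sizes sum to exactly $n-1$.
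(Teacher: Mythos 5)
Your proposal is correct and follows essentially the same route as the paper, which simply notes that \cref{co:Ai} is immediate from \cref{l.Ai} together with the intermediate bound $\expect \|\bfe_{\cI}^{\T} (\bG^{(t)} - \nabla \bF(\bX^{(t)}))\|_2^2 \le |\cI| \sigma^2$ established in the proof of \cref{l.bound_var}. Your explicit verification that the layer index sets partition $\{2,\ldots,n\}$ so that their cardinalities sum to $n-1$ is exactly the bookkeeping the paper leaves implicit.
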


\subsubsection{Proof of Lemma \ref{l.X_diff}}

\label{pf.l.X_diff}

\begin{proof}
    Notice that
    $$
    \begin{aligned}
        & \bar{\bX}^{(t+1)} - \Bar{\bX}^{(t)} = -\gamma\frac{1}{n}\mone\bu^{\T} \bY^{(t)} = -\gamma\frac{1}{n}\mone\bu^{\T} \brk{\bPi_{\bv}\bY^{(t)} + \frac{1}{n}\bv\mone^{\T}\bY^{(t)}} \\
        & \qquad = -\gamma\frac{1}{n}\mone\bu^{\T} \bPi_{\bv}\bY^{(t)}  -\gamma \mone\mone^{\T} \bY^{(t)}  = -\gamma\mone \prt{\frac{\bu^{\T}}{n}-\mone^{\T}}\bPi_{\bv}\bY^{(t)}  -\gamma \mone\mone^{\T} \bY^{(t)}.
    \end{aligned}
    $$
    Multiplying $\mone^{\T}$ on both sides of equation \eqref{eq:pp2}, we have $\mone^{\T}\bY^{(t)} = \mone^{\T}\bG^{(t)}$ for any integer $t$. Thus, in light of equation \eqref{eq:pp22}, we have
    $$
    \begin{aligned}
        &\bar{\bX}^{(t+1)} - \Bar{\bX}^{(t)}  = -\gamma\mone \prt{\frac{\bu^{\T}}{n}-\mone^{\T}} \bPi_{\bv}\bY^{(t)}  -\gamma \mone\mone^{\T} \bG^{(t)}  \\
        & \qquad = -\gamma\mone \prt{\frac{\bu^{\T}}{n}-\mone^{\T}}\sum_{m=1}^{\min\crk{t,d}} \bA_m\bG^{(t-m)} - \gamma \mone  \prt{\frac{\bu^{\T}}{n}-\mone^{\T}} \bG^{(t)}  -\gamma \mone\mone^{\T} \bG^{(t)} \\
        & \qquad = -\gamma \mone \prt{\frac{\bu^{\T}}{n}-\mone^{\T}}\sum_{m=1}^{\min\crk{t,d}} \bA_m\prt{\bG^{(t-m)} - \nabla \bF(\bX^{(t-m)})} \\
        & \qquad \qquad - \gamma \mone \prt{\frac{\bu^{\T}}{n}-\mone^{\T}}\sum_{m=1}^{\min\crk{t,d}} \bA_m \nabla \bF(\bX^{(t-m)})  - \gamma \mone\frac{\bu^{\T}}{n} \bG^{(t)}  \\
        & \qquad = -\gamma \mone \prt{\frac{\bu^{\T}}{n}-\mone^{\T}}\sum_{m=1}^{\min\crk{t,d}} \bA_m\prt{\bG^{(t-m)} - \nabla \bF(\bX^{(t-m)})} \\
        & \qquad \qquad -\gamma \mone \prt{\frac{\bu^{\T}}{n}-\mone^{\T}} \sum_{m=\max\{0,t-d\}}^{t-1} \prt{\cC - \frac{1}{n}\bv\mone^{\T}}^{t-m-1} \brk{\nabla \bF(\bX^{(m+1)}) - \nabla \bF(\bX^{(m)})} \\
        & \qquad \qquad - \gamma \mone \prt{\frac{\bu^{\T}}{n}-\mone^{\T}} \prt{\cC - \frac{1}{n}\bv\mone^{\T}}^{t}\nabla\bF(\bX^{(0)}) + \gamma \mone \prt{\frac{\bu^{\T}}{n}-\mone^{\T}} \nabla \bF(\bX^{(t)}) - \gamma \mone\frac{\bu^{\T}}{n} \bG^{(t)}.
    \end{aligned}
    $$
    Hence, taking the F-norm and expectation on both sides, we have from \cref{l.sum_matrix} that
    \begin{equation}
        \label{eq:diff_X_1}
        \begin{aligned}
            & \expect\|\bar{\bX}^{(t+1)} - \Bar{\bX}^{(t)} \|_F^2 \le 5\gamma^2 n \expect \norm{\sum_{m=1}^{\min\crk{t,d}} \prt{\frac{\bu^{\T}}{n}-\mone^{\T}}\bA_m\prt{\bG^{(t-m)} - \nabla \bF(\bX^{(t-m)})}}_F^2 \\
            & \qquad + 5\gamma^2 n \expect \norm{\prt{\frac{\bu^{\T}}{n}-\mone^{\T}}\sum_{m=\max\{0,t-d\}}^{t-1} \prt{\cC - \frac{1}{n}\bv\mone^{\T}}^{t-m-1} \brk{\nabla \bF(\bX^{(m+1)}) - \nabla \bF(\bX^{(m)})}}_F^2 \\
            & \qquad + 5\gamma^2 n \expect \norm{\prt{\frac{\bu^{\T}}{n}-\mone^{\T}}\prt{\cC - \frac{1}{n}\bv\mone^{\T}}^{t}\nabla\bF(\bX^{(0)})}_F^2 \\
            &\qquad  + 5\gamma^2 n \expect \norm{\frac{\bu^{\T}}{n} \prt{\nabla \bF(\bX^{(t)}) - \bG^{(t)}}}_F^2 + 5\gamma^2 n \expect \norm{\mone^{\T}\bF(\bX^{(t)})}_F^2.
        \end{aligned}
    \end{equation}
    Note that, invoking \cref{l.bound_var} and \cref{co:Ai} yields
    $$
    \begin{aligned}
        & \expect \norm{\sum_{m=1}^{\min\crk{t,d}} \prt{\frac{\bu^{\T}}{n}-\mone^{\T}}\bA_m\prt{\bG^{(t-m)} - \nabla \bF(\bX^{(t-m)})}}_F^2 \\
        = & \sum_{m=1}^{ \min\crk{t,d} } \expect\norm{\prt{\frac{\bu^{\T}}{n}-\mone^{\T}}\bA_m\prt{\bG^{(t-m)} - \nabla \bF(\bX^{(t-m)})}}_F^2 \\
        \le & (n-1)\sigma^2.
    \end{aligned}
    $$
    From \cref{a.smooth}, \cref{l.matrix} and \cref{l.sum_matrix}, we have
    $$
    \begin{aligned}
        & \expect \norm{\prt{\frac{\bu^{\T}}{n}-\mone^{\T}}\sum_{m=\max\{0,t-d\}}^{t-1} \prt{\cC - \frac{1}{n}\bv\mone^{\T}}^{t-m-1} \brk{\nabla \bF(\bX^{(m+1)}) - \nabla \bF(\bX^{(m)})}}_F^2 \\
        \le & d\sum_{m=\max\{0,t-d\}}^{t-1} \expect \norm{ \prt{\frac{\bu^{\T}}{n}-\mone^{\T}}\prt{\cC - \frac{1}{n}\bv\mone^{\T}}^{t-m-1} \brk{\nabla \bF(\bX^{(m+1)}) - \nabla \bF(\bX^{(m)})}}_F^2 \\
        \le & d \sum_{m=\max\{0,t-d\}}^{t-1} \expect \norm{ \prt{\frac{\bu^{\T}}{n}-\mone^{\T}}\prt{\cC - \frac{1}{n}\bv\mone^{\T}}^{t-m-1} }_2^2 \norm{ \nabla \bF(\bX^{(m+1)}) - \nabla \bF(\bX^{(m)})}_F^2 \\
        \le & nd L^2 \sum_{m=\max\{0,t-d\}}^{t-1} \expect \norm{\bX^{(m+1)} - \bX^{(m)}}_F^2 \\
        \le & nd L^2 \sum_{m=\max\{0,t-d\}}^{t-1} 3\prt{\expect \norm{\bX^{(m+1)} - \bar{\bX}^{(m+1)}}_F^2 + \expect \norm{\bX^{(m)} - \bar{\bX}^{(m)}}_F^2 + \expect \norm{\bar{\bX}^{(m+1)} - \bar{\bX}^{(m)}}_F^2 }.
    \end{aligned}
    $$
    Thus, summing over $t$ in \eqref{eq:diff_X_1} from 0 to $T$, combining all the inequalities above, and invoking \cref{a.var} and \cref{a.smooth}, we have
    \begin{equation}
    \label{eq:refine-X-diff}
    \begin{aligned}
        & \sum_{t=0}^{T} \expect\|\bar{\bX}^{(t+1)} - \Bar{\bX}^{(t)} \|_F^2 \le 5\gamma^2 n(n-1)\sigma^2(T+1) + 30\gamma^2n^2d^2 L^2 \sum_{t=0}^{T}\expect \norm{\bPi_{\bu}\bX^{(t)}}_F^2 \\
        &\qquad + 15 \gamma^2n^2d^2 L^2 \sum_{t=0}^{T} \expect \norm{\bar{\bX}^{(t+1)} - \bar{\bX}^{(t)}}_F^2 \\
        &\qquad + 5\gamma^2 n^2 \sum_{t=0}^{\min\{t,d-1\}} \expect \norm{\nabla\bF(\bX^{(0)})}_F^2  + 5\gamma^2 n \sigma^2 (T+1) \\
        &\qquad + 5\gamma^2 n \sum_{t=0}^{T}\prt{2\expect\norm{\mone^{\T}\nabla \bF(\bX^{(t)}) - \mone^{\T}\nabla\bF(\bar{\bX}^{(t)}) }_2^2+2n^2\expect\norm{\frac{1}{n}\mone^{\T}\nabla\bF(\bar{\bX}^{(t)})}_2^2}  \\
        &\quad \le  5\gamma^2 n^2\sigma^2(T+1) + 40\gamma^2n^2d^2L^2 \sum_{t=0}^{T}\expect \norm{\bPi_{\bu}\bX^{(t)}}_F^2 + 15 \gamma^2n^2d^2L^2 \sum_{t=0}^{T} \expect \norm{\bar{\bX}^{(t+1)} - \bar{\bX}^{(t)}}_F^2 \\
        &\qquad + 5\gamma^2 n^2 d \norm{\nabla\bF(\bX^{(0)})}_F^2  + 10\gamma^2 n^3\sum_{t=0}^{T}\expect\norm{\nabla f(x_1^{(t)})}_2^2.
    \end{aligned}
    \end{equation}
    Since $\gamma \le \frac{1}{10ndL}$, we have $15 \gamma^2n^2d^2L^2\le \frac{1}{6}$, and the desired result follows.
\end{proof}

\subsubsection{Proof of Lemma \ref{l.PiX}}
\label{pf.l.PiX}
\begin{proof}
    
We show the upper bound for  $\expect\norm{\bPi_{\bu}\bX^{(t)}}_F^2$ by studying equation \eqref{eq:pp11} and bound the F-norm of each term respectively. From \cref{l.R0}, we can change the power of $\cR - \frac{1}{n}\mone\bu^{\T}$ to at most $d-2$:
\begin{equation}
    \begin{aligned}
        \bPi_{\bu}\bX^{(t)}  = &  (\cR^t-\frac{1}{n}\mone\bu^{\T})\bX^{(0)} - \gamma \sum_{m=0}^{t-2} \sum_{j=1}^{\min\{t-m-1, d-1\}}(\cR-\frac{1}{n}\mone\bu^{\T})^j\cC^{t-m-1-j} \brk{\bG^{(m+1)} 
        - \bG^{(m)}} \\
        & \qquad -\gamma  \sum_{j=1}^{\min\{t,d-1\}}\prt{\cR - \frac{1}{n}\mone\bu^{\T}}^j\cC^{t-j} \bG^{(0)}.
    \end{aligned}
    \label{eq:ppbX0}
\end{equation}
Then, we derive the following decomposition by pairing the gradients with each of the stochastic gradients in order to use \cref{a.var}.
\begin{equation}
    \label{eq:ppbX}
    \begin{aligned}
        \bPi_{\bu}\bX^{(t)} =&  (\cR^{\T}-\frac{1}{n}\mone\bu^{\T})\bX^{(0)} -\gamma  \sum_{j=1}^{\min\{t,d-1\}}\prt{\cR - \frac{1}{n}\mone\bu^{\T}}^j\prt{\cC-\frac{1}{n}\bv\mone^{\T}}^{t-j} \prt{\bG^{(0)} - \nabla \bF(\bX^{(0)})}  \\
        & - \gamma  \sum_{j=1}^{\min\{t,d-1\}}\prt{\cR - \frac{1}{n}\mone\bu^{\T}}^j \frac{1}{n}\bv\mone^{\T} \prt{\bG^{(0)} - \nabla \bF(\bX^{(0)})} \\
        & - \gamma \sum_{j=1}^{\min\{t,d-1\}}\prt{\cR - \frac{1}{n}\mone\bu^{\T}}^j\prt{\cC-\frac{1}{n}\bv\mone^{\T}}^{t-j}  \nabla \bF(\bX^{(0)}) \\
        & - \gamma \sum_{j=1}^{\min\{t,d-1\}}\prt{\cR - \frac{1}{n}\mone\bu^{\T}}^j \frac{1}{n}\bv\mone^{\T}  \nabla \bF(\bX^{(0)}) \\
        &- \gamma \sum_{m=0}^{t-2} \sum_{j=\max\{1, t-m-d\}}^{\min\{t-m-1, d-1\}}(\cR-\frac{1}{n}\mone\bu^{\T})^j \prt{\cC - \frac{1}{n}\bv\mone^{\T}}^{t-m-1-j} \\
        &\qquad \qquad \qquad \qquad \brk{\bG^{(m+1)} - \nabla \bF(\bX^{(m+1)}) + \nabla \bF(\bX^{(m)})- \bG^{(m)}} \\
        &  - \gamma \sum_{m=0}^{t-2} \sum_{j=\max\{1, t-m-d\}}^{\min\{t-m-1, d-1\}}(\cR-\frac{1}{n}\mone\bu^{\T})^j\prt{\cC - \frac{1}{n}\bv\mone^{\T}}^{t-m-1-j}  \brk{\nabla \bF(\bX^{(m+1)}) - \nabla \bF(\bX^{(m)})} \\
        &  - \gamma \sum_{m=0}^{t-2} \sum_{j=1}^{\min\{t-m-1, d-1\}} \prt{ \cR-\frac{1}{n}\mone\bu^{\T}}^j \frac{\bv\mone^{\T}}{n}\brk{\bG^{(m+1)} - \nabla \bF(\bX^{(m+1)}) + \nabla \bF(\bX^{(m)})- \bG^{(m)}} \\
        & - \gamma\sum_{m=0}^{t-2} \sum_{j=1}^{\min\{t-m-1, d-1\}}\prt{ \cR-\frac{1}{n}\mone\bu^{\T}}^j\frac{\bv\mone^{\T}}{n}\brk{\nabla \bF(\bX^{(m+1)}) - \nabla \bF(\bX^{(m)})} \\
        := &\prt{\cR-\frac{1}{n}\mone\bu^{\T}}^{t}\bX^{(0)} 
        % -\gamma  \sum_{j=1}^{\min\{t,N-2\}}\prt{\cR - \frac{1}{n}\mone\bu^{\T}}^j\cC^{t-j} \bG^{(0)}
        - \gamma \bQ_{0,t,1}- \gamma \bQ_{0,t,2}- \gamma \bQ_{0,t,3}- \gamma \bQ_{0,t,4}\\
        & - \gamma \bQ_{1,t} - \gamma \bQ_{2,t} -  \gamma \bQ_{3,t} - \gamma \bQ_{4,t},
    \end{aligned}
\end{equation}
where the terms from $\bQ_{0,t,1}$ to $\bQ_{0,t,4}$ and from $\bQ_{1,t}$ to $\bQ_{4,t}$ correspond to each term following $\prt{\cR-\frac{1}{n}\mone\bu^{\T}}^{\T}\bX^{(0)}$ one-by-one.

We assume that $d\ge2$, since $d=1$ makes the summation illegal (summing over $j$ from a positive number to a non-positive number), in which case $\bPi_{\bu}\bX^{(t)}$ degenerates to $(\cR^{t}-\frac{1}{n}\mone\bu^{\T})\bX^{(0)}$ and hence by \cref{l.R0}, there is no consensus error, i.e.
$$
\sum_{t=0}^{T} \norm{ \bPi_{\bu}\bX^{(t)}}_F^2 = 0.
$$
For $d\ge 2$, \cref{l.bXQ1} - \cref{l.bXQ3} below introduce the upper bounds for the F-norms of $\bQ_{1,t}$, $\bQ_{2,t}$ and $\bQ_{3,t}+\bQ_{0,t,2}$, summing from $t=0$ to $T$. \cref{l.bXQ0} establishes a similar upper bound for the F-norm of $\bQ_{0,t,1} +  \bQ_{0,t,3}$. Furthermore, \cref{l.bXQ00} provides the upper bound for the F-norm of $\bQ_{0,t,4} + \bQ_{4,t}$.
\begin{lemma}
    \label{l.bXQ1}
    For any iteration number $T$, we have
    $$
    \sum_{t=0}^{T}\expect\|\bQ_{1,t}\|_F^2 \le 
    32n^2 d^4 (T+1) \sigma^2.
    $$
\end{lemma}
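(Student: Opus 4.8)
The plan is to bound $\bQ_{1,t}$, which is the term in the decomposition \eqref{eq:ppbX} that collects the fresh gradient-noise differences $\bG^{(m+1)} - \nabla\bF(\bX^{(m+1)}) + \nabla\bF(\bX^{(m)}) - \bG^{(m)}$, by a pure triangle-inequality argument that never needs any martingale cancellation. Writing $\mathbf{N}^{(s)} := \bG^{(s)} - \nabla\bF(\bX^{(s)})$ for the gradient noise at step $s$, we have $\bQ_{1,t} = \sum_{m=0}^{t-2}\sum_{j}\prt{\cR - \tfrac1n\mone\bu^{\T}}^j\prt{\cC - \tfrac1n\bv\mone^{\T}}^{t-m-1-j}\prt{\mathbf{N}^{(m+1)} - \mathbf{N}^{(m)}}$, where $j$ runs from $\max\{1, t-m-d\}$ to $\min\{t-m-1, d-1\}$. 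The entire estimate hinges on first reading off how many nonzero summands this double sum really contains.

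First I would record the crucial \emph{window restriction}. For the inner index set $\max\{1, t-m-d\}\le j\le \min\{t-m-1, d-1\}$ to be nonempty, one needs both $t-m-d\le d-1$ and $1\le t-m-1$, i.e. $t-2d+1\le m\le t-2$. Hence at most $2d-2$ values of $m$ contribute, and for each such $m$ the inner sum over $j$ has at most $d-1$ terms. This is precisely what keeps the final bound linear in $T$: each $\expect\norm{\bQ_{1,t}}_F^2$ turns out to be $\cO(n^2 d^4 \sigma^2)$ uniformly in $t$, so summing over $t$ only costs a factor $(T+1)$.

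Next I would apply \cref{l.sum_matrix} twice — once to the at most $2d-2$ outer summands and once to the at most $d-1$ inner summands — together with the elementary split $\norm{\mathbf{N}^{(m+1)}-\mathbf{N}^{(m)}}_F^2 \le 2\norm{\mathbf{N}^{(m+1)}}_F^2 + 2\norm{\mathbf{N}^{(m)}}_F^2$. Every resulting atomic term has the form $\expect\norm{\prt{\cR-\tfrac1n\mone\bu^{\T}}^j\prt{\cC-\tfrac1n\bv\mone^{\T}}^{k}\mathbf{N}^{(s)}}_F^2$ with $j\le d-1$ and $k=t-m-1-j\le d-1$, where the bound $k\le d-1$ is guaranteed exactly by the lower summation limit $j\ge t-m-d$. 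I would bound each atom using \cref{l.matrix} to peel off the row factor, \cref{l.R0knorm} to get $\norm{\prt{\cR-\tfrac1n\mone\bu^{\T}}^j}_2^2\le n$, and \cref{l.bound_var} to get $\expect\norm{\prt{\cC-\tfrac1n\bv\mone^{\T}}^k\mathbf{N}^{(s)}}_F^2\le 2n\sigma^2$; combined with the noise split this gives $8n^2\sigma^2$ per difference. Collecting the two combinatorial factors $M^2\le 4(d-1)^2$ and $J^2\le (d-1)^2$ with this per-difference bound yields $\expect\norm{\bQ_{1,t}}_F^2\le 4(d-1)^2\cdot(d-1)^2\cdot 8n^2\sigma^2 = 32(d-1)^4 n^2\sigma^2\le 32 n^2 d^4 \sigma^2$, and summing over $t=0,\dots,T$ produces the claimed $32 n^2 d^4 (T+1)\sigma^2$.

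The main obstacle — indeed the only genuinely non-routine step — is spotting the window restriction $t-2d+1\le m\le t-2$; without it, a naive triangle inequality over $m$ would run all the way to $m=t-2$ and make $\expect\norm{\bQ_{1,t}}_F^2$ grow like $t$, producing a useless $\cO(T^2)$ bound after summation. A secondary subtlety is checking that the exponent $k=t-m-1-j$ never exceeds $d-1$, so that \cref{l.bound_var} (whose hypothesis requires power $\le d-1$) is applicable; this is exactly the role played by the lower summation limit $t-m-d$, and it is what allows the argument to proceed entirely under $\norm{\cdot}_2$ and $\norm{\cdot}_F$ without the induced norms $\norm{\cdot}_{\cR}$ and $\norm{\cdot}_{\cC}$ used elsewhere.
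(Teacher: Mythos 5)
Your proposal is correct and follows essentially the same route as the paper: identify the window restriction $m\ge t+1-2d$ (so at most $2d-2$ outer and $d-1$ inner summands), apply \cref{l.sum_matrix}, peel off the row factor via $\norm{(\cR-\tfrac1n\mone\bu^{\T})^j}_2^2\le n$ from \cref{l.R0knorm}, and bound the column factor acting on the noise difference by $8n\sigma^2$ via \cref{l.bound_var}, yielding $32(d-1)^4 n^2\sigma^2\le 32n^2d^4\sigma^2$ per iteration. The only cosmetic difference is that you apply \cref{l.sum_matrix} twice in nested fashion while the paper applies it once to the flattened double sum with $2(d-1)^2$ terms; the arithmetic is identical.
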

\begin{proof}
    To make the summation legal given $d\ge 2$, we need $d-1 \ge t-m-d$, which implies that $m\ge t+1-2d$. Then,
    $$
    \begin{aligned}
        \bQ_{1,t} = & \sum_{m=0}^{t-2} \sum_{j=\max\{1, t-m-d\}}^{\min\{t-m-1, d-1\}}\prt{\cR-\frac{1}{n}\mone\bu^{\T}}^j
        \prt{\cC - \frac{1}{n}\bv\mone^{\T}}^{t-m-1-j} \\
        & \qquad \qquad \qquad \brk{\bG^{(m+1)} - \nabla \bF(\bX^{(m+1)}) + \nabla \bF(\bX^{(m)})- \bG^{(m)}} \\
        =& \sum_{m=\max\{t+1-2d,0\}}^{t-2} \sum_{j=\max\{1, t-m-d\}}^{\min\{t-m-1, d-1\}}\prt{\cR-\frac{1}{n}\mone\bu^{\T}}^j\\
        & \qquad\qquad
        \prt{\cC - \frac{1}{n}\bv\mone^{\T}}^{t-m-1-j} \brk{\bG^{(m+1)} - \nabla \bF(\bX^{(m+1)}) + \nabla \bF(\bX^{(m)})- \bG^{(m)}}.
    \end{aligned}
    $$
    Invoking \cref{l.sum_matrix}, \cref{l.R0knorm} and \cref{l.bound_var}, we have
    $$
    \begin{aligned}
        &\expect \| \bQ_{1,t} \|_F^2 \le 2(d-1)^2 \sum_{m=\max\{t+1-2d,0\}}^{t-2} \sum_{j=\max\{1, t-m-d\}}^{\min\{t-m-1, d-1\}}\expect\norm{\prt{\cR-\frac{1}{n}\mone\bu^{\T}}^j}_2^2 \cdot\\
        & \qquad\qquad \norm{\prt{\cC - \frac{1}{n}\bv\mone^{\T}}^{t-m-1-j} \brk{\bG^{(m+1)} - \nabla \bF(\bX^{(m+1)}) + \nabla \bF(\bX^{(m)})- \bG^{(m)}}}_F^2 \\
        & \quad \le 2n(d-1)^2 \sum_{m=\max\{t+1-2d,0\}}^{t-1} \sum_{j=\max\{1, t-m-d\}}^{\min\{t-m-1, d-1\}} 8n\sigma^2 \\
        & \quad \le  32n^2 d^4\sigma^2.
    \end{aligned}
    $$
    Summing over $t$ from 0 to $T$, we get the desired result.
\end{proof}

\begin{lemma}
    \label{l.bXQ2}
    For any $\gamma \le \frac{1}{20ndL}$, we have
    $$
    \begin{aligned}
        \sum_{t=0}^{T} \expect \| \bQ_{2,t} \|_F^2 & \le 52 n^2 d^4 L^2 \sum_{t=0}^{T} \expect  \norm{\bPi_{\bu}\bX^{(t)}}_F^2  + 144\gamma^2 n^4 d^4 (T+1) L^2 \sigma^2 \\
        &+ 360\gamma^2 n^5 d^4 L^2 \sum_{t=0}^{T}\expect\norm{\frac{1}{n}\nabla \bF(\bar{\bX}^{(t)})}_2^2 + 360\gamma^2 n^4d^5 L^2\norm{\nabla \bF(\bX^{(0)})}_F^2.
    \end{aligned}
    $$
\end{lemma}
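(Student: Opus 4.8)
The matrix $\bQ_{2,t}$ collects the deterministic gradient-difference contributions, namely
$$
\bQ_{2,t} = \sum_{m=0}^{t-2} \sum_{j=\max\{1,t-m-d\}}^{\min\{t-m-1,d-1\}} \prt{\cR-\tfrac{1}{n}\mone\bu^{\T}}^j \prt{\cC-\tfrac{1}{n}\bv\mone^{\T}}^{t-m-1-j} \brk{\nabla\bF(\bX^{(m+1)}) - \nabla\bF(\bX^{(m)})}.
$$
The plan is to mirror the argument already used for $\bQ_{1,t}$ in \cref{l.bXQ1}, the only structural change being that the bracketed factor is now a \emph{difference of true gradients} rather than a difference of gradient noises. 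Consequently, instead of invoking the variance bound, I will control the bracket through the Lipschitz property in \cref{a.smooth}, which converts it into the iterate increment $\norm{\bX^{(m+1)}-\bX^{(m)}}_F^2$; this increment is then handled by a consensus/mean splitting and by \cref{l.X_diff}.

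First I would restrict the effective summation range exactly as in \cref{l.bXQ1}: the inner index $j$ is non-empty only when $t-m-d\le d-1$, i.e.\ $m\ge t+1-2d$, so the outer sum runs over $\max\{0,t+1-2d\}\le m\le t-2$ (at most $O(d)$ values) while the inner sum has at most $d-1$ terms. Applying \cref{l.sum_matrix} to pull the squared norm inside the double sum (picking up a number-of-terms prefactor of order $d^2$), then \cref{l.matrix} twice together with \cref{l.R0knorm} and its $\cC$-counterpart to bound both $\norm{\prt{\cR-\tfrac{1}{n}\mone\bu^{\T}}^j}_2^2\le n$ and $\norm{\prt{\cC-\tfrac{1}{n}\bv\mone^{\T}}^{t-m-1-j}}_2^2\le n$ (the latter power is indeed at most $d-1$ under the range constraint), and finally \cref{a.smooth} row-wise, I obtain a bound of the form
$$
\expect\norm{\bQ_{2,t}}_F^2 \le C_1 d^3 n^2 L^2 \sum_{m=\max\{0,t+1-2d\}}^{t-2} \expect\norm{\bX^{(m+1)}-\bX^{(m)}}_F^2
$$
for an absolute constant $C_1$. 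I then split the increment via $\norm{\bX^{(m+1)}-\bX^{(m)}}_F^2\le 3\big(\norm{\bPi_{\bu}\bX^{(m+1)}}_F^2+\norm{\bPi_{\bu}\bX^{(m)}}_F^2+\norm{\bar{\bX}^{(m+1)}-\bar{\bX}^{(m)}}_F^2\big)$, exactly as in the proof of \cref{l.X_diff}.

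Next I would sum over $t$ from $0$ to $T$ and swap the order of summation. Because the range constraint $t+1-2d\le m\le t-2$ means each fixed $m$ contributes to only $O(d)$ values of $t$, the double sum collapses to a single sum over $m$ with an extra factor of order $d$, giving a bound of order $d^4 n^2 L^2$ times $\sum_{m} \big(\norm{\bPi_{\bu}\bX^{(m)}}_F^2 + \norm{\bar{\bX}^{(m+1)}-\bar{\bX}^{(m)}}_F^2\big)$. The consensus-error pieces already match the first term of the claimed bound. For the mean-increment pieces I substitute \cref{l.X_diff}, which replaces $\sum_m\norm{\bar{\bX}^{(m+1)}-\bar{\bX}^{(m)}}_F^2$ by a combination of a $\sigma^2(T+1)$ term, a further $\sum_m\norm{\bPi_{\bu}\bX^{(m)}}_F^2$ term, the initial term $\norm{\nabla\bF(\bX^{(0)})}_F^2$, and the $\sum_m\norm{\nabla f(x_1^{(m)})}_2^2$ term, the last recognized as $\norm{\tfrac{1}{n}\mone^{\T}\nabla\bF(\bar{\bX}^{(m)})}_2^2$. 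Here the stepsize hypothesis $\gamma\le\tfrac{1}{20ndL}$ enters: it guarantees $50\gamma^2 n^2 d^2 L^2\le\tfrac{1}{8}$, so the extra consensus term produced by \cref{l.X_diff} is of the same order $d^4 n^2 L^2$ and can be folded into the leading coefficient, yielding the stated constants $52$, $144$, $360$, $360$.

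The main obstacle I anticipate is purely the bookkeeping of the nested index ranges — correctly establishing that each $m$ is reused by $O(d)$ distinct values of $t$, and tracking how the two spectral-norm bounds (one from the $\cR$-power, one from the $\cC$-power) compound with the number-of-terms prefactors to produce the $d^4 n^2 L^2$ scaling. A secondary subtlety is that the bound is deliberately \emph{not} self-contained: $\sum_t\norm{\bPi_{\bu}\bX^{(t)}}_F^2$ reappears on the right-hand side, so one must only keep its coefficient bounded via the stepsize condition rather than attempting to solve for it here; the actual absorption into a consensus recursion is deferred to \cref{l.PiX}.
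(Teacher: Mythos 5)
Your proposal matches the paper's proof essentially step for step: the same restriction of the outer sum to $m\ge t+1-2d$, the same application of \cref{l.sum_matrix}, \cref{l.matrix} and \cref{l.R0knorm} to extract the $n^2d^3$ prefactor, the same split of $\norm{\bX^{(m+1)}-\bX^{(m)}}_F^2$ into consensus and mean-increment pieces, the same extra factor of $d$ from swapping the order of summation, and the same substitution of \cref{l.X_diff} with the stepsize condition absorbing the induced consensus term into the leading coefficient. The argument is correct and no different in substance from the paper's.
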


\begin{proof}
    Similar to the proof of \cref{l.bXQ1}, we have
    $$
    \bQ_{2,t} =  \sum_{m=\max\{t+1-2d,0\}}^{t-1} \sum_{j=\max\{1, t-m-d\}}^{\min\{t-m-1, d-1\}}\prt{\cR-\frac{1}{n}\mone\bu^{\T}}^j
    \prt{\cC - \frac{1}{n}\bv\mone^{\T}}^{t-m-1-j} \brk{ \nabla \bF(\bX^{(m+1)}) - \nabla \bF(\bX^{(m)})}.
    $$
    Invoking \cref{l.sum_matrix} and \cref{a.smooth}, we obtain
    $$
    \begin{aligned}
        \expect \| \bQ_{2,t}\|_F^2 & \le 4 n^2 d^3  \sum_{m=\max\{t+1-2d,0\}}^{t-1} \expect \brk{ \| \nabla \bF(\bX^{(m+1)}) - \nabla \bF(\bX^{(m)}) \|_F^2}\\
        &\le 4 n^2 d^3 L^2  \sum_{m=\max\{t+1-2d,0\}}^{t-1} \expect \norm{ \bX^{(m+1)} - \bar{\bX}^{(m+1)} + \bar{\bX}^{(m)}- \bX^{(m)} + \bar{\bX}^{(m+1)} - \bar{\bX}^{(m)}}_F^2 \\
        & \le 24 n^2 d^3 L^2 \sum_{m=\max\{t+1-2d,0\}}^{t } \expect  \norm{\bPi_{\bu}\bX^{(t)}}_F^2 +  12 n^2 d^3 L^2 \sum_{m=\max\{t+1-2d,0\}}^{t-1 } \expect \norm{\bar{\bX}^{(m+1)} - \bar{\bX}^{(m)}}_F^2.
    \end{aligned}
    $$
    It follows by summing over $t$ from $0$ to $T$ and applying \cref{l.X_diff} that
    $$
    \begin{aligned}
        & \sum_{t=0}^{T} \expect \| \bQ_{2,t}\|_F^2  \le 48 n^2 d^4 L^2 \sum_{t=0}^{T} \expect  \norm{\bPi_{\bu}\bX^{(t)}}_F^2 +  24 n^2 d^4 L^2 \sum_{t=0}^{T} \expect  \norm{\bar{\bX}^{(t+1)} - \bar{\bX}^{(t)}}_F^2\\
        &\qquad \le  \prt{48 n^2 d^4 L^2 + 1200\gamma^2 n^4 d^6 L^4} \sum_{t=0}^{T} \expect  \norm{\bPi_{\bu}\bX^{(t)}}_F^2 + 144\gamma^2 n^4 d^4 (T+1) L^2 \sigma^2 \\
        & \qquad \qquad + 360\gamma^2 n^5 d^4 L^2 \sum_{t=0}^{T}\expect\norm{\nabla f(x_1^{(t)})}_2^2 + 144\gamma^2 n^4d^5 L^2 \norm{\nabla \bF(\bX^{(0)})}_F^2.
    \end{aligned}
    $$
    Hence, under the condition that $\gamma \le \frac{1}{20ndL}$, there holds $1200\gamma^2 n^4 (d-1)^6 L^4 \le 4n^2 (d-1)^4 L^2$, which implies the desired result.
\end{proof}

\begin{lemma}
    \label{l.bXQ3}
    For any $T$, we have
    $$
    \sum_{t=0}^{T}\expect \| \bQ_{3,t} + \bQ_{0,t,2} \|_F^2 \le d^2 n^2 (T+1) \sigma^2.
    $$
\end{lemma}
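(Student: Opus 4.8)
The plan is to exploit the structural feature that separates $\bQ_{3,t}$ and $\bQ_{0,t,2}$ from $\bQ_{1,t}$: in both of these terms the stochastic-gradient noise is multiplied by $\prt{\cR-\frac{1}{n}\mone\bu^{\T}}^j\frac{\bv\mone^{\T}}{n}$, a coefficient that depends only on the exponent $j$ and \emph{not} on the running index $m$. Write $\bE^{(s)} := \bG^{(s)} - \nabla\bF(\bX^{(s)})$ for the noise. Since $\prt{\cR-\frac{1}{n}\mone\bu^{\T}}^j = \cR^j - \frac{1}{n}\mone\bu^{\T}$ vanishes for $j \ge d$ by \cref{l.R0}, every inner sum over $j$ effectively runs only up to $d-1$, and $\frac{\bv\mone^{\T}}{n}\bE^{(s)}$ is the matrix whose only nonzero (first) row equals $\mone^{\T}\bE^{(s)}$.

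First I would reorganize $\bQ_{3,t}$ by interchanging the order of summation. For fixed $j$ the constraint $1\le j\le t-m-1$ forces $0\le m\le t-1-j$, so
$$
\bQ_{3,t} = \sum_{j=1}^{\min\{t-1,d-1\}}\prt{\cR-\frac{1}{n}\mone\bu^{\T}}^j\frac{\bv\mone^{\T}}{n}\sum_{m=0}^{t-1-j}\brk{\bE^{(m+1)} - \bE^{(m)}},
$$
and the inner sum telescopes to $\bE^{(t-j)} - \bE^{(0)}$. Adding $\bQ_{0,t,2} = \sum_{j=1}^{\min\{t,d-1\}}\prt{\cR-\frac{1}{n}\mone\bu^{\T}}^j\frac{\bv\mone^{\T}}{n}\bE^{(0)}$ then cancels every $\bE^{(0)}$ contribution once the two upper limits are matched (the range mismatch supplies exactly the missing $j=t$ term in the regime $t\le d-1$), leaving the collapsed single sum
$$
\bQ_{3,t} + \bQ_{0,t,2} = \sum_{j=1}^{\min\{t,d-1\}}\prt{\cR-\frac{1}{n}\mone\bu^{\T}}^j\frac{\bv\mone^{\T}}{n}\,\bE^{(t-j)}.
$$

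With this expression the bound is routine. I would apply \cref{l.sum_matrix} over the at-most-$(d-1)$ summands, then \cref{l.matrix} together with $\|\prt{\cR-\frac{1}{n}\mone\bu^{\T}}^j\|_2 \le \sqrt n$ from \cref{l.R0knorm} on each term. Because $\frac{\bv\mone^{\T}}{n}\bE^{(t-j)}$ has only a first row, $\|\frac{\bv\mone^{\T}}{n}\bE^{(t-j)}\|_F^2 = \|\mone^{\T}\bE^{(t-j)}\|_2^2$, and the index-set variance estimate $\expect\|\bfe_{\cI}^{\T}\bE^{(t)}\|_2^2 \le |\cI|\sigma^2$ established inside the proof of \cref{l.bound_var}, taken with $\cI = \{1,\dots,n\}$, gives $\expect\|\mone^{\T}\bE^{(t-j)}\|_2^2 \le n\sigma^2$. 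Each summand is therefore bounded by $n^2\sigma^2$ in expectation, so $\expect\|\bQ_{3,t}+\bQ_{0,t,2}\|_F^2 \le (d-1)^2 n^2\sigma^2 \le d^2 n^2\sigma^2$, and summing over $t=0,\dots,T$ yields the claim.

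The hard part will be the bookkeeping in the reorganization step: I must interchange the two sums correctly, confirm the telescoping leaves $\bE^{(t-j)}-\bE^{(0)}$, and verify that the slightly different limits $\min\{t-1,d-1\}$ in $\bQ_{3,t}$ and $\min\{t,d-1\}$ in $\bQ_{0,t,2}$ conspire so that the $\bE^{(0)}$ terms cancel exactly in every regime of $t$ relative to $d$ (including the small-$t$ cases). It is precisely the $m$-independence of the mixing coefficient that enables this cancellation and accounts for the improved $d^2$ factor, in contrast to the $d^4$ factor obtained for $\bQ_{1,t}$ in \cref{l.bXQ1}.
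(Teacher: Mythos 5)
Your proposal is correct and follows essentially the same route as the paper: the paper likewise interchanges the sums, telescopes the noise increments against the $\bQ_{0,t,2}$ term to collapse everything into the single sum $\sum_{m=\max\{t-d,0\}}^{t-1}\prt{\cR-\frac{1}{n}\mone\bu^{\T}}^{t-m}\frac{\bv\mone^{\T}}{n}\brk{\bG^{(m)}-\nabla\bF(\bX^{(m)})}$ (identical to your $j=t-m$ form, with the vanishing $j=d$ term retained), and then applies \cref{l.sum_matrix}, \cref{l.R0knorm}, and the per-index-set variance estimate exactly as you do. Your case analysis of the mismatched upper limits and the resulting cancellation of the $\bE^{(0)}$ terms checks out, so the only difference is that you make explicit the bookkeeping the paper compresses into a single equality.
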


\begin{proof}
    By definition, we have
    $$
    \begin{aligned}
        & \bQ_{3,t} + \bQ_{0,t,2} = \sum_{j=1}^{\min\{t,d-1\}} \prt{\cR - \frac{1}{n}\mone\bu^{\T}}^j \frac{1}{n}\bv\mone^{\T}\prt{\bG^{(0)} - \nabla \bF(\bX^{(0)})} +  \\
         & \sum_{m=0}^{t-2} \sum_{j=1}^{\min\{t-m-1, d-1\}} 
        \prt{ \cR-\frac{1}{n}\mone\bu^{\T}}^j \frac{\bv\mone^{\T}}{n}\brk{\bG^{(m+1)} - \nabla \bF(\bX^{(m+1)}) + \nabla \bF(\bX^{(m)})- \bG^{(m)}} \\
        = & \sum_{m=\max\{t-d,0\}}^{t-1} \prt{ \cR-\frac{1}{n}\mone\bu^{\T}}^{t-m}\frac{\bv\mone^{\T}}{n} \brk{\bG^{(m)} - \nabla \bF(\bX^{(m)})}.
    \end{aligned}
    $$
    Thus, invoking \cref{l.sum_matrix} and \cref{l.R0knorm}, we have
    $$
    \expect \| \bQ_{3,t}\|_F^2 \le  n d \sum_{m=\max\{t-d,0\}}^{t-1} \expect\|\frac{\bv}{n}\|_2^2\cdot \| \mone^{\T} \prt{\bG^{(m+1)} - \nabla \bF(\bX^{(m+1)})}\|_F^2 \le d^2 n^2 \sigma^2.
    $$
    After summing over $t$ from $0$ to $T$, we get the desired result.
\end{proof}

\begin{lemma}
\label{l.bXQ0}
    For any $T$, we have
    $$
    \begin{aligned}
        & \sum_{t=0}^{T} \expect \norm{\bQ_{0,t,1} + \bQ_{0,t,3} }_F^2 \le 4n^2 d^3 \sigma^2 + 4n^2d^3 \norm{\nabla\bF(\bX^{(0)})}_F^2.
    \end{aligned}
    $$
\end{lemma}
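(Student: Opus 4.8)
The plan is to exploit that $\bQ_{0,t,1}$ and $\bQ_{0,t,3}$ are the two ``initial-condition'' terms sharing the common matrix prefix $\prt{\cR - \frac{1}{n}\mone\bu^{\T}}^j\prt{\cC-\frac{1}{n}\bv\mone^{\T}}^{t-j}$, the first acting on the noise $\bG^{(0)} - \nabla\bF(\bX^{(0)})$ and the second on $\nabla\bF(\bX^{(0)})$. I would first record
$$
\bQ_{0,t,1} = \sum_{j=1}^{\min\{t,d-1\}}\prt{\cR - \tfrac{1}{n}\mone\bu^{\T}}^j\prt{\cC-\tfrac{1}{n}\bv\mone^{\T}}^{t-j}\prt{\bG^{(0)} - \nabla\bF(\bX^{(0)})}
$$
together with the analogous $\bQ_{0,t,3}$ (replacing $\bG^{(0)} - \nabla\bF(\bX^{(0)})$ by $\nabla\bF(\bX^{(0)})$), and split $\expect\norm{\bQ_{0,t,1} + \bQ_{0,t,3}}_F^2 \le 2\expect\norm{\bQ_{0,t,1}}_F^2 + 2\norm{\bQ_{0,t,3}}_F^2$, so that the noise term carries the $\sigma^2$ contribution and the gradient term carries the $\norm{\nabla\bF(\bX^{(0)})}_F^2$ contribution.

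The decisive step, which removes any dependence on $T$, is a double truncation from \cref{l.R0knorm}: the factor $\prt{\cR - \frac{1}{n}\mone\bu^{\T}}^j$ vanishes for $j\ge d$ and $\prt{\cC - \frac{1}{n}\bv\mone^{\T}}^{t-j}$ vanishes for $t-j\ge d$. Hence a summand is nonzero only when $1\le j\le d-1$ and $0\le t-j\le d-1$, which forces $t\le 2d-2$. I would therefore replace $\sum_{t=0}^{T}$ by $\sum_{t=1}^{2d-2}$ and observe that the number of surviving indices $j$ for a given $t$ equals $m_t := \min\{t,\,2d-1-t,\,d-1\}$, i.e. $m_t=t$ for $t\le d-1$ and $m_t=2d-1-t$ for $t\ge d$.

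To bound a single summand I would apply \cref{l.matrix} with the spectral estimate $\norm{\prt{\cR - \frac{1}{n}\mone\bu^{\T}}^j}_2\le\sqrt{n}$ of \cref{l.R0knorm}, followed by \cref{l.bound_var} (legitimate since $t-j\le d-1$), giving $\expect\norm{\prt{\cR - \frac{1}{n}\mone\bu^{\T}}^j\prt{\cC - \frac{1}{n}\bv\mone^{\T}}^{t-j}\prt{\bG^{(0)}-\nabla\bF(\bX^{(0)})}}_F^2 \le n\cdot 2n\sigma^2$; the deterministic analogue is $\le n\cdot n\norm{\nabla\bF(\bX^{(0)})}_F^2$ after bounding the $\cC$-factor by \cref{l.matrix} and \cref{l.R0knorm} as well. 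Invoking \cref{l.sum_matrix} over the $j$-sum contributes a factor $m_t$, so $\expect\norm{\bQ_{0,t,1}}_F^2\le 2m_t^2 n^2\sigma^2$ and $\norm{\bQ_{0,t,3}}_F^2\le m_t^2 n^2\norm{\nabla\bF(\bX^{(0)})}_F^2$.

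Finally I would sum over $t$, using $\sum_{t=1}^{2d-2} m_t^2 = 2\sum_{s=1}^{d-1}s^2 = \tfrac{1}{3}(d-1)d(2d-1)\le \tfrac{2}{3}d^3$ to collapse the double sum; the factor-$2$ split then yields the coefficients $\tfrac{8}{3}<4$ on the $\sigma^2$ term and $\tfrac{4}{3}<4$ on the gradient term, matching the claim. I expect the only genuine obstacle to be the truncation bookkeeping: correctly seeing that both matrix powers must be simultaneously nonzero caps $t$ at $2d-2$ and is what converts a naive $\cO(T)$ estimate into the $T$-free bound $4n^2d^3\sigma^2 + 4n^2d^3\norm{\nabla\bF(\bX^{(0)})}_F^2$; the remaining steps are routine norm estimates.
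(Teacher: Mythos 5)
Your proposal is correct and follows essentially the same route as the paper: split $\expect\norm{\bQ_{0,t,1}+\bQ_{0,t,3}}_F^2$ into the two pieces, use the double truncation from \cref{l.R0knorm} to cap $t$ at $\cO(d)$ and the $j$-range at $d-1$, and then apply \cref{l.sum_matrix}, \cref{l.matrix} and \cref{l.bound_var} termwise. Your counting of the surviving indices via $m_t$ is slightly sharper than the paper's cruder bound (giving constants $8/3$ and $4/3$ instead of $4$), but this is a minor refinement rather than a different argument.
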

\begin{proof}
    Note that 
    $$
    \norm{\bQ_{0,t,1} + \bQ_{0,t,3} }_F^2 \le 2\norm{\bQ_{0,t,1}}_F^2 + 2\norm{\bQ_{0,t,3}}_F^2.
    $$
    We show the upper bounds for $\sum_{t=0}^{T}\norm{\bQ_{0,t,i}}_F^2$, where $i=1,3$ respectively.
    Based on \cref{l.R0}, \cref{l.sum_matrix} and \cref{l.bound_var}, we have the following result:
    $$
    \begin{aligned}
        & \norm{\bQ_{0,t,1}}_F^2 = \norm{\sum_{j=1}^{\min\{t,d-1\}} \prt{\cR - \frac{1}{n}\mone\bu^{\T}}^j\prt{\cC-\frac{1}{n}\bv\mone^{\T}}^{t-j}\prt{\bG^{(0) }- \nabla\bF(\bX^{(0)})} }_F^2 \\
        &\qquad = \norm{\sum_{j=\max\{1,t-d+1\}}^{\min\{t,d-1\}} \prt{\cR - \frac{1}{n}\mone\bu^{\T}}^j\prt{\cC-\frac{1}{n}\bv\mone^{\T}}^{t-j} \prt{\bG^{(0) }- \nabla\bF(\bX^{(0)})} }_F^2 \\
        &\qquad \le nd \sum_{j=\max\{1,t-d+1\}}^{\min\{t,d-1\}} \norm{\prt{\cC-\frac{1}{n}\bv\mone^{\T}}^{t-j} \prt{\bG^{(0) }- \nabla\bF(\bX^{(0)})}}_F^2.
    \end{aligned}
    $$
    Then, recall that the summation is legal only when $t\le 2(d-2)$. We have
    $$
    \begin{aligned}
        & \sum_{t=0}^{T}\expect \norm{\bQ_{0,t,1}}_F^2 \le \sum_{t=0}^{\min\{T,2(d-2)\}}nd \sum_{j=\max\{1,t-d+1\}}^{\min\{t,d-1\}} \norm{\prt{\cC-\frac{1}{n}\bv\mone^{\T}}^{t-j} \prt{\bG^{(0) }- \nabla\bF(\bX^{(0)})}}_F^2 \\
        & \quad \le 2n^2d^3\sigma^2.
    \end{aligned}
    $$
    Similarly, 
    $$
    \begin{aligned}
        &  \sum_{t=0}^{\T}\expect \norm{\bQ_{0,t,3}}_F^2 \\
        &\qquad \le \sum_{t=0}^{\min\{T,2(d-1)\}}  d \sum_{j=\max\{1,t-d+1\}}^{\min\{t,d-1\}}  \norm{\prt{\cR - \frac{1}{n}\mone\bu^{\T}}^j}_2^2\norm{\prt{\cC-\frac{1}{n}\bv\mone^{\T}}^{t-j} }_2^2 \norm{\nabla \bF(\bX^{(0)})}_F^2\\
        & \qquad \le 2n^2 d^3 \norm{\nabla \bF(\bX^{(0)})}_F^2.
    \end{aligned}
    $$
    Combining the above upper bounds leads to the final result.
\end{proof}

\begin{lemma}
    \label{l.bXQ00}
    For any $T$, we have
    $$
    \sum_{t=0}^{T} \expect \norm{\bQ_{0,t,4} + \bQ_{4,t}}_F^2 \le 2n^2 d^2 L^2 \sum_{t=0}^{T} \expect \|\bPi_{\bu}\bX^{(t)}\|_F^2 + 2n^3 d^2 \sum_{t=0}^{T}\expect  \| f(x_1^{(m)})\|_2^2.
    $$
\end{lemma}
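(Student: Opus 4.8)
The plan is to first collapse $\bQ_{0,t,4} + \bQ_{4,t}$ into a single telescoped sum, mirroring exactly the manipulation already carried out for $\bQ_{3,t} + \bQ_{0,t,2}$ in \cref{l.bXQ3}. Both $\bQ_{0,t,4}$ and $\bQ_{4,t}$ share the common prefactor $\prt{\cR - \frac{1}{n}\mone\bu^{\T}}^j \frac{\bv\mone^{\T}}{n}$, with $\bQ_{0,t,4}$ carrying the initial gradient $\nabla\bF(\bX^{(0)})$ and $\bQ_{4,t}$ carrying the consecutive differences $\nabla\bF(\bX^{(m+1)}) - \nabla\bF(\bX^{(m)})$. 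Tracking the coefficient of each $\nabla\bF(\bX^{(m)})$ across both sums and invoking \cref{l.R0} (so that $\prt{\cR - \frac{1}{n}\mone\bu^{\T}}^j$ vanishes for $j\ge d$, causing the interior and boundary terms to cancel), I expect to obtain the compact form
$$
\bQ_{0,t,4} + \bQ_{4,t} = \sum_{m=\max\{t-d,0\}}^{t-1} \prt{\cR - \frac{1}{n}\mone\bu^{\T}}^{t-m}\frac{\bv\mone^{\T}}{n}\nabla\bF(\bX^{(m)}),
$$
in which at most $d$ summands are nonzero.

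Next I would bound the Frobenius norm of this sum. Applying \cref{l.sum_matrix} pulls out a factor of $d$ (the number of nonzero terms), and \cref{l.matrix} together with \cref{l.R0knorm} gives $\norm{\prt{\cR - \frac{1}{n}\mone\bu^{\T}}^{t-m}}_2^2 \le n$ for each surviving term. Since $\bv = \brk{n,0,\cdots,0}^{\T}$, the matrix $\frac{\bv\mone^{\T}}{n}\nabla\bF(\bX^{(m)})$ has its first row equal to $\mone^{\T}\nabla\bF(\bX^{(m)}) = \sum_{i=1}^n \nabla f_i(x_i^{(m)})$ and all other rows zero, so its squared Frobenius norm equals $\norm{\mone^{\T}\nabla\bF(\bX^{(m)})}_2^2$. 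Collecting these facts yields the per-$t$ estimate $\expect\norm{\bQ_{0,t,4}+\bQ_{4,t}}_F^2 \le nd\sum_m \expect\norm{\mone^{\T}\nabla\bF(\bX^{(m)})}_2^2$.

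The final ingredient is to decompose the aggregated gradient around $x_1^{(m)}$. Writing $\sum_i \nabla f_i(x_i^{(m)}) = \sum_i\brk{\nabla f_i(x_i^{(m)}) - \nabla f_i(x_1^{(m)})} + n\nabla f(x_1^{(m)})$, then applying $\norm{a+b}^2\le 2\norm{a}^2 + 2\norm{b}^2$, Cauchy--Schwarz, and the $L$-smoothness of \cref{a.smooth}, and recalling that $\norm{\bPi_{\bu}\bX^{(m)}}_F^2 = \sum_i\norm{x_i^{(m)} - x_1^{(m)}}^2$ (because $\bar\bX^{(m)} = \mone (x_1^{(m)})^{\T}$), I obtain
$$
\norm{\mone^{\T}\nabla\bF(\bX^{(m)})}_2^2 \le 2nL^2\norm{\bPi_{\bu}\bX^{(m)}}_F^2 + 2n^2\norm{\nabla f(x_1^{(m)})}_2^2.
$$
Summing the per-$t$ estimate over $t=0,\dots,T$ and noting that each index $m$ appears in at most $d$ of the inner sums (namely for $m+1\le t\le m+d$) produces the second factor of $d$, delivering the coefficients $2n^2d^2L^2$ and $2n^3d^2$ claimed in the statement.

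The main obstacle is the telescoping identity in the first step: one must verify that the $m$-dependent upper summation limits $\min\{t-m-1,d-1\}$ together with the separate initial term $\bQ_{0,t,4}$ combine so that every interior difference cancels and only the single-power terms $\prt{\cR - \frac{1}{n}\mone\bu^{\T}}^{t-m}$ survive, which hinges critically on the nilpotency $\prt{\cR - \frac{1}{n}\mone\bu^{\T}}^d = \mathbf{0}$ from \cref{l.R0}. Everything afterward is routine norm bookkeeping using the already-established \cref{l.sum_matrix}, \cref{l.matrix}, and \cref{l.R0knorm}.
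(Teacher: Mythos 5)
Your proposal is correct and follows essentially the same route as the paper: the same telescoping of $\bQ_{0,t,4}+\bQ_{4,t}$ into $\sum_{m=\max\{t-d,0\}}^{t-1}\prt{\cR-\frac{1}{n}\mone\bu^{\T}}^{t-m}\frac{\bv\mone^{\T}}{n}\nabla\bF(\bX^{(m)})$, the same norm bookkeeping via \cref{l.sum_matrix}, \cref{l.matrix}, and \cref{l.R0knorm}, and the same gradient decomposition (splitting around $x_1^{(m)}$ is identical to the paper's splitting around $\bar\bX^{(m)}$, since $\bar\bX^{(m)}=\mone\,(x_1^{(m)})^{\T}$), followed by the multiplicity-$d$ count over $t$. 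Your derivation also correctly produces $\norm{\nabla f(x_1^{(t)})}_2^2$ in the second term, where the lemma statement has a typo omitting the gradient.
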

\begin{proof}
    Note that
    $$
    \begin{aligned}
        & \bQ_{0,t,4} + \bQ_{4,t} = \sum_{m=0}^{t-2} \sum_{j=1}^{\min\{t-m-1, d-1\}}\prt{ \cR-\frac{1}{n}\mone\bu^{\T}}^j\frac{\bv\mone^{\T}}{n}\brk{\nabla \bF(\bX^{(m+1)}) - \nabla \bF(\bX^{(m)})} \\
        & \qquad + \sum_{j=1}^{\min\{t,d-1\}}\prt{\cR - \frac{1}{n}\mone\bu^{\T}}^j \frac{1}{n}\bv\mone^{\T}  \nabla \bF(\bX^{(0)}) \\
        & \quad = \sum_{m=\max\{t-d,0\}}^{t-1} \prt{ \cR-\frac{1}{n}\mone\bu^{\T}}^{t-m} \frac{\bv\mone^{\T}}{n} \nabla \bF(\bX^{(m)}),
    \end{aligned}
    $$
    where the last equality comes from extending the summation in the first line and telescoping the summation. Consequently, we have
    $$
    \begin{aligned}
        & \bQ_{0,t,4} + \bQ_{4,t} = \sum_{m=\max\{t-d,0\}}^{t-1} \prt{ \cR-\frac{1}{n}\mone\bu^{\T}}^{t-m} \frac{\bv\mone^{\T}}{n} \prt{\nabla \bF(\bX^{(m)}) - \nabla \bF(\bar{\bX}^{(m)}) + \nabla \bF(\bar{\bX}^{(m)})}.
    \end{aligned}
    $$
    Then, taking the F-norm on both sides and invoking \cref{l.matrix}, \cref{l.sum_matrix} and \cref{l.X_diff} as before, we have
    $$
    \begin{aligned}
        & \norm{\bQ_{0,t,4} + \bQ_{4,t}}_F^2 \le dn \sum_{m=\max\{t-d,0\}}^{t-1} \prt{2\| \frac{\bv\mone^{\T}}{n} \brk{\nabla \bF(\bX^{(m)}) 
        - \nabla \bF(\bar{\bX}^{(m)})}\|_F^2 + 2\| \frac{\bv\mone^{\T}}{n}\nabla \bF(\bar{\bX}^{(m)}) \|_F^2  } \\
        & \le 2d n^2 L^2 \sum_{m=\max\{t-d,0\}}^{t-1} \|\bPi_{\bu}\bX^{(m)}\|_F^2 + 
        2d n^3\sum_{m=\max\{t-d,0\}}^{t-1} \| \nabla f(x_1^{(m)})\|_2^2.
    \end{aligned}
    $$
    Taking expectation on both sides and summing over $t$ from $0$ to $T$, we get
    $$
    \sum_{t=0}^{T} \expect \norm{\bQ_{0,t,4} + \bQ_{4,t}}_F^2 \le 2n^2 d^2 L^2 \sum_{t=0}^{T} \expect \|\bPi_{\bu}\bX^{(t)}\|_F^2 + 2n^3 d^2 \sum_{t=0}^{T}\expect  \| \nabla f(x_1^{(m)})\|_2^2.
    $$
\end{proof}

Back to equation \eqref{eq:ppbX}, note that
$$
\begin{aligned}
    & \norm{\bPi_{\bu}\bX^{(t)}}_F^2 \le  6\norm{(\cR^{\T}-\frac{1}{n}\mone\bu^{\T})^{t}\bX^{(0)}}_F^2 + 6\gamma^2\norm{ \bQ_{0,t,1} + \bQ_{0,t,3} }_F^2 \\
    &\qquad + 6\gamma^2\norm{\bQ_{1,t}}_F^2 + 6\gamma^2\norm{ \bQ_{2,t}}_F^2 + 6\gamma^2\norm{ \bQ_{3,t} + \bQ_{0,t,2}}_F^2 +  6\gamma^2\norm{ \bQ_{0,t,4} + \bQ_{4,t}}_F^2.
\end{aligned}
$$ 
Taking full expectation on both sides, summing over $t$ from $0$ to $T$ and combining \cref{l.bXQ1} to \cref{l.bXQ0}, we have
\begin{equation}
    \begin{aligned}
        & \sum_{t=0}^{T} \expect\norm{\bPi_{\bu}\bX^{(t)}}_F^2 \le 6\sum_{t = 0}^{\min\{T,d-1\}}\expect\norm{\prt{\cR^{\T}-\frac{1}{n}\mone\bu^{\T}}^{t}\bX^{(0)}}_F^2 + 6\gamma^2 \sum_{t=0}^{T}\expect\norm{\bQ_{1,t}}_F^2 + 6\gamma^2\sum_{t=0}^{T}\expect\norm{ \bQ_{2,t}}_F^2 \\
        & \qquad \qquad + 6\gamma^2\sum_{t=0}^{T}\expect\norm{ \bQ_{3,t} + \bQ_{0,t,2}}_F^2 +  6\gamma^2\sum_{t=0}^{T}\expect\norm{ \bQ_{0,t,4} + \bQ_{4,t} }_F^2 + 6\gamma^2\sum_{t=0}^{T} \expect \norm{ \bQ_{0,t,1}  + \bQ_{0,t,3}  }_F^2 \\
        & \quad \le 6nd\norm{\bPi_{\bu}\bX^{(0)}}_F^2 + 192\gamma^2n^2d^4(T+1)\sigma^2  + 312 \gamma^2 n^2 d^4 L^2 \sum_{t=0}^{T} \expect  \norm{\bPi_{\bu}\bX^{(t)}}_F^2\\
        &\qquad\qquad + 1000 \gamma^4 n^4 d^4 (T+1) L^2 \sigma^2+ 2160\gamma^4 n^5 d^4 L^2 \sum_{t=0}^{T}\expect\norm{\nabla f(x_1^{(t)})}_2^2 \\
        & \qquad\qquad + 2160 \gamma^4 n^4 d^5 L^2\norm{\nabla\bF(\bX^{(0)})}_F^2+6\gamma^2 n^2 d^2 (T+1) \sigma^2 + 24\gamma^2 n^2 d^3 \sigma^2\\
        &\qquad\qquad + 24 \gamma^2 n^2 d^3 \norm{\nabla \bF(\bX^{(0)})}_F^2 + 12 \gamma^2 n^2 d^2 L^2 \sum_{t=0}^{T} \expect  \norm{\bPi_{\bu}\bX^{(t)}}_F^2 + 12 \gamma^2n^3 d^2  \sum_{t=0}^{T}\expect\norm{\nabla f(x_1^{(t)})}_2^2.
    \end{aligned}
    \label{eq:ppPiXfinal}
\end{equation}
For $\gamma\le \frac{1}{40nd^2 L}$, which implies that $312 \gamma^2 n^2 d^4 L^2 +  12 \gamma^2 n^2 d^2 L^2 \le \frac{1}{4}$, we can  simplify equation \eqref{eq:ppPiXfinal} as follows:
    $$
    \begin{aligned}
        & \sum_{t=0}^{T} \norm{\bPi_{\bu}\bX^{(t)}}_F^2 
        \le  300 \gamma^2 n^2 d^4 (T+1) \sigma^2 + 20\gamma^2 n^3 d^2 \sum_{t=0}^{T}\expect  \|\nabla f(x_1^{(t)})\|_2^2 \\
        & \qquad \qquad + 6nd\norm{\bPi_{\bu}\bX^{(0)}}_F^2 + 40\gamma^2 n^2 d^3 \norm{\nabla \bF(\bX^{(0)})}_F^2,
    \end{aligned}
    $$
    which implies the desired result.

\end{proof}

\subsubsection{Proof of Lemma \ref{l.inner}}
\label{pf.l.inner}
\begin{proof}
    Notice that
    $$
    x_1^{(t)} = x_1^{(t-1)} - \gamma y_1^{(t-1)} = x_1^{(t-1)} - \gamma \sum_{i\in \cI_{1,1}} y_1^{(t-2)} - \gamma g_1(x_1^{(t-1)},\xi_1^{(t-1)}) + \gamma g_1(x_1^{(t-2)},\xi_1^{(t-2)}).
    $$
    Therefore, $x_1^{(t)}$ does not depend on $\xi_i^{t-1}$ for $i\ne 1$. 
    We iterate the above procedure to get
    $$
    \begin{aligned}
        x_1^{(t)} = & x_1^{(t-1)} - \gamma g_1(x_1^{(t-1)},\xi_1^{(t-1)}) + \gamma g_1(x_1^{(t-2)},\xi_1^{(t-2)})\\
        & - \gamma \sum_{i\in \cI_{1,2}} y_i^{(t-3)} -\gamma \sum_{i\in \cI_{1,1}} g_i(x_1^{(t-2)},\xi_1^{(t-2)}) + \gamma \sum_{i\in \cI_{1,1}} g_i(x_1^{(t-3)},\xi_1^{(t-3)}).
    \end{aligned}
    $$
    Similar to $x_1^{(t)}$, we know that
    $x_1^{(t-1)}$ does not depend on $\xi_i^{(t-2)}$, $i\ne 1$.
    Hence $x_1^{(t)}$ is independent with $\xi_i^{(t-2)}$ for $i\notin\cI_{1,1}$. By iterating the procedure, we conclude that $x_1^{(t)}$ is independent with $\xi_i^{(t-k)}$ for $i\notin\cI_{1,k}$.
    
    Consequently, by choosing $Z = \{\xi_i^{(t-k)}, i \in \cI_{1,k}, i\notin \cI_{1,k-1}\}$, $X = x_1^{(t)}$, $Y = \{ x_i^{(t-k)}, i\in\brk{n}\}$ in \cref{lem:independ_help}, $Z$ is independent with $(X,Y)$, we get
    $$
    \expect\left\langle \nabla f(x_1^{(t)}), \bfe_{\cI_{1,k}/ \cI_{1,k-1}} \prt{\bG^{(t-k)} - \nabla \bF(\bX^{(t-k)})} \right\rangle = 0.
    $$
    Then, invoking \cref{l.Ai}, we have
    $$
    \begin{aligned}
        & \sum_{m=1}^{\min\{t,d\}} \expect \left\langle \nabla f(x_1^{(t)}), \prt{\frac{\bu^{\T}}{n}-\mone^{\T}} \bA_m \prt{\bG^{(t-m)} - \nabla \bF(\bX^{(t-m)})}\right\rangle \\
        = &  \sum_{m=1}^{\min\{t,d\}} \expect\left\langle \nabla f(x_1^{(t)}), \bfe_{\cI_{1,m} / \cI_{1,m-1}} \prt{\bG^{(t-m)} - \nabla \bF(\bX^{(t-m)})} \right\rangle = 0.
    \end{aligned}
    $$
\end{proof}

\subsubsection{Proof of Lemma \ref{l.fbarX}}
\label{pf.l.fbarX}
\begin{proof}
    
By \cref{a.smooth}, the function $f:= \frac{1}{n} \sum_{i=1}^n f_i$ is $L$-smooth. Then,
    \begin{equation}
        \expect f(x_1^{(t+1)})\le \expect f(x_1^{(t)}) + \expect \langle \nabla f(x_1^{(t)}), x_1^{(t+1)} - x_1^{(t)}\rangle + \frac{L}{2} \expect \| x_1^{(t+1)} -x_1^{(t)} \|_2^2.
        \label{eq:fbar}
    \end{equation}
    For the last term, we have
    $$
    \begin{aligned}
        & \expect\|x_1^{(t+1)} - x_1^{(t)}\|_2^2 = \expect\| \frac{1}{n} \bu^{\T} \prt{\bX^{(t+1)}  -\bX^{(t)} } \|_2^2 \\
        = &  \frac{1}{n} \expect \| \frac{1}{n} \mone \bu^{\T} \prt{\bX^{(t+1)}  -\bX^{(t)} } \|_F^2 = \frac{1}{n} \expect \| \Bar{\bX}^{(t+1)} - \Bar{\bX}^{(t)} \|_F^2.
    \end{aligned}
    $$
    For the second last term, we have
    \begin{equation}
    \begin{aligned}
        & \expect \langle \nabla f(x_1^{(t)}), x_1^{(t+1)} - x_1^{(t)}\rangle = \expect \langle \nabla f(x_1^{(t)}), \frac{\bu^{\T}}{n}\prt{\bX^{(t+1)} - \bX^{(t)}}\rangle =  \expect \langle \nabla f(x_1^{(t)}), -\gamma\frac{\bu^{\T}}{n}\bY^{(t)}\rangle \\
    = & \expect \langle \nabla f(x_1^{(t)}), -\gamma\prt{\frac{\bu^{\T}}{n}-\mone^{\T}}\bY^{(t)}\rangle +  \expect \langle \nabla f(x_1^{(t)}), -\gamma\mone^{\T}\bY^{(t)}\rangle.
    \end{aligned} 
    \label{eq:fbarX-1}       
    \end{equation}
    We now bound the two terms in the above equation. Firstly,
    $$
    \begin{aligned}
        & \expect \langle \nabla f(x_1^{(t)}), -\gamma\prt{\frac{\bu^{\T}}{n}-\mone^{\T}}\bY^{(t)}\rangle = 
        \gamma \expect \langle \nabla f(x_1^{(t)}), -\prt{\frac{\bu^{\T}}{n}-\mone^{\T}}\bPi_{\bv}\bY^{(t)}\rangle.
    \end{aligned}
    $$
    Recall that by equation \eqref{eq:pp22},
    \begin{equation}
        \label{eq:lem:inner}
    \begin{aligned}
        &  \gamma\expect\langle \nabla f(x_1^{(t)}), -\prt{\frac{\bu^{\T}}{n}-\mone^{\T}}\bPi_{\bv}\bY^{(t)}\rangle \\
        = & \gamma \expect \left\langle \nabla f(x_1^{(t)}), - \prt{\frac{\bu^{\T}}{n}-\mone^{\T}} \sum_{m=1}^{\min\{t,d\}} \bA_m\bG^{(t-m)} - \prt{\frac{\bu^{\T}}{n}-\mone^{\T}} \bG^{(t)} \right\rangle \\
        = & -\gamma \expect \left\langle \nabla f(x_1^{(t)}), \prt{\frac{\bu^{\T}}{n}-\mone^{\T}} \prt{\bG^{(t)} - \nabla \bF(\bX^{(t)})} \right\rangle \\
        & - \gamma \expect \left\langle \nabla f(x_1^{(t)}), \prt{\frac{\bu^{\T}}{n}-\mone^{\T}} \sum_{m=1}^{\min\{t,d\}} \bA_m \prt{\bG^{(t-m)} - \nabla \bF(\bX^{(t-m)})}\right\rangle \\
        & -\gamma \expect \left\langle \nabla f(x_1^{(t)}), \prt{\frac{\bu^{\T}}{n}-\mone^{\T}} \nabla \bF(\bX^{(t)}) \right\rangle \\
        & -  \gamma \expect \left\langle \nabla f(x_1^{(t)}), \prt{\frac{\bu^{\T}}{n}-\mone^{\T}} \sum_{m=1}^{\min\{t,d\}} \bA_m  \nabla \bF(\bX^{(t-m)}) \right\rangle.
    \end{aligned}
    \end{equation}
    We bound the four terms above one by one. For the first term,
    $$
    \expect\langle \nabla f(x_1^{(t)}), -\prt{\frac{\bu^{\T}}{n}-\mone^{\T}}\prt{\bG^{(t)} - \nabla \bF(\bX^{(t)})}\rangle  =  0.
    $$
    For the second one, invoking \cref{l.inner}, we have
    $$
    \begin{aligned}
        &\expect \left\langle \nabla f(x_1^{(t)}), \prt{\frac{\bu^{\T}}{n}-\mone^{\T}} \sum_{m=1}^{\min\{t,d\}} \bA_m \prt{\bG^{(t-m)} - \nabla \bF(\bX^{(t-m)})}\right\rangle \\
        = & \sum_{m=1}^{\min\{t,d\}} \expect \left\langle \nabla f(x_1^{(t)}), \prt{\frac{\bu^{\T}}{n}-\mone^{\T}} \bA_m \prt{\bG^{(t-m)} - \nabla \bF(\bX^{(t-m)})}\right\rangle = 0.
    \end{aligned}
    $$
    For the last two terms in \eqref{eq:lem:inner}, we have as:
    $$
    \begin{aligned}
        &- \expect \left\langle \nabla f(x_1^{(t)}), \prt{\frac{\bu^{\T}}{n}-\mone^{\T}} \nabla \bF(\bX^{(t)}) \right\rangle -  \expect \left\langle \nabla f(x_1^{(t)}), \prt{\frac{\bu^{\T}}{n}-\mone^{\T}} \sum_{m=1}^{\min\{t,d\}} \bA_m  \nabla \bF(\bX^{(t-m)}) \right\rangle\\
         = & \sum_{m=1}^{\min\{t,d\}}\expect\left\langle  \nabla f(x_1^{(t)}), -\prt{\frac{\bu^{\T}}{n}-\mone^{\T}}\prt{\cC-\frac{1}{n}\bv\mone^{\T}}^{m-1}\prt{\nabla\bF(\bX^{(t-m+1)}) - \nabla \bF(\bX^{(t-m)}) } \right\rangle.
    \end{aligned}
    $$
    By the Cauchy-Schwartz inequality, we have
    $$
    \begin{aligned}
        & \sum_{m=1}^{\min\{t,d\}}\expect\left\langle  \nabla f(x_1^{(t)}), -\prt{\frac{\bu^{\T}}{n}-\mone^{\T}}\prt{\cC-\frac{1}{n}\bv\mone^{\T}}^{m-1}\prt{\nabla\bF(\bX^{(t-m+1)}) - \nabla \bF(\bX^{(t-m)}) } \right\rangle \\
        \le & \sum_{m=1}^{\min\{t,d\}} \crk{\frac{n}{2d}\expect \norm{\nabla f(x_1^{(t)})}_2^2 + \frac{d}{2n}\expect \norm{\prt{\frac{\bu^{\T}}{n}-\mone^{\T}}\prt{\cC-\frac{1}{n}\bv\mone^{\T}}^{m-1}}_2^2\norm{\nabla \bF(\bX^{(t-m+1)}) - \nabla \bF(\bX^{(t-m)})}_F^2}\\
        \le & \frac{n}{2}\expect \norm{\nabla f(x_1^{(t)})}_2^2 + \frac{dL^2}{2}\sum_{m=1}^{\min\{t,d\}} \expect\norm{\bX^{(t-m+1)} - \bX^{(t-m)} }_F^2 \\
        \le & \frac{n}{2}\expect \norm{\nabla f(x_1^{(t)})}_2^2 + \frac{3dL^2}{2}\sum_{m=1}^{\min\{t,d\}} \expect\prt{\norm{\bPi_{\bu}\bX^{(t-m+1)}}_F^2 + \norm{\bPi_{\bu}\bX^{(t-m)}}_F^2 + \norm{\bar{\bX}^{(t-m+1)} - \bar{\bX}^{(t-m)}}_F^2}.
    \end{aligned}
    $$
    Thus, combining the above inequalities together yields
    $$
    \begin{aligned}
        &\sum_{t=0}^{T} \gamma\expect\langle \nabla f(x_1^{(t)}), -\prt{\frac{\bu^{\T}}{n}-\mone^{\T}}\bPi_{\bv}\bY^{(t)}\rangle \le  \frac{n\gamma}{2} \sum_{t=0}^{T} \expect \norm{\nabla f(x_1^{(t)})}_2^2 \\
        &\qquad \qquad+ 3\gamma d^2 L^2 \sum_{t=0}^{T} \expect\norm{\bPi_{\bu}\bX^{(t)}}_F^2 + 3\gamma d^2L^2 \sum_{t=0}^{T} \expect\norm{\bar{\bX}^{(t+1)} - \bar{\bX}^{(t)}}_F^2.
    \end{aligned}
    $$
    Secondly, for the second term in \eqref{eq:fbarX-1},
    $$
    \begin{aligned}
        & \expect \langle \nabla f(x_1^{(t)}), -\gamma\mone^{\T}\bY^{(t)}\rangle = \expect \langle \nabla f(x_1^{(t)}), -\gamma\mone^{\T}\bG^{(t)}\rangle = \expect \langle \nabla f(x_1^{(t)}), -\gamma\mone^{\T} \nabla \bF(\bX^{(t)})\rangle \\
        = & -n\gamma \expect \| \nabla f(x_1^{(t)}) \|_2^2 - n\gamma \expect \langle \nabla f(x_1^{(t)}), \frac{1}{n} \mone^{\T} \nabla \bF(\bX^{(t)}) - \frac{1}{n}\mone^{\T} \nabla \bF(\Bar{\bX}^{(t)})\rangle \\
        \le & -n\gamma \expect \| \nabla f(x_1^{(t)}) \|_2^2 + \gamma \frac{n}{4} \expect \| \nabla f(x_1^{(t)}) \|_2^2 + 2\gamma  \expect \| \nabla \bF(\bX^{(t)}) - \nabla \bF(\Bar{\bX}^{(t)})\|_F^2 \\
        \le &  -n\gamma \expect \| \nabla f(x_1^{(t)}) \|_2^2 + \gamma \frac{n}{4} \expect \| \nabla f(x_1^{(t)}) \|_2^2 + 2\gamma L^2 \expect \| \bPi_{\bu}\bX^{(t)}\|_F^2.
    \end{aligned}
    $$
    Summing over $t$ from $0$ to $T$, we have
    $$
    \sum_{t=0}^{T} \expect \langle \nabla f(x_1^{(t)}), -\gamma\mone^{\T}\bY^{(t)}\rangle \le -\frac{3n \gamma}{4}\sum_{t=0}^{T}\expect\norm{\nabla f(x_1^{(t)})}_2^2 + 2\gamma L^2 \sum_{t=0}^{T}\expect\norm{\bPi_{\bu}\bX^{(t)}}_F^2.
    $$
    It yields by summing over $t$ from 0 to $T$ on both sides of equation \eqref{eq:fbar} that
    $$
    \begin{aligned}
       & \expect f(x_1^{(T+1)}) - \expect f(x_1^0) \le \sum_{t=0}^{T}\expect \langle \nabla f(x_1^{(t)}), x_1^{(t+1)} - x_1^{(t)}\rangle +\frac{L}{2n}\sum_{t=0}^{T} \expect \| \Bar{\bX}^{(t+1)} - \Bar{\bX}^{(t)} \|_F^2\\
       & \quad \le  \sum_{t=0}^{T} \expect \langle \nabla f(x_1^{(t)}), -\gamma\mone^{\T}\bY^{(t)}\rangle + \sum_{t=0}^{T} \gamma\expect\langle \nabla f(x_1^{(t)}), -\prt{\frac{\bu^{\T}}{n}-\mone^{\T}}\bPi_{\bv}\bY^{(t)}\rangle +\frac{L}{2n}\sum_{t=0}^{T} \expect \| \Bar{\bX}^{(t+1)} - \Bar{\bX}^{(t)} \|_F^2.
    \end{aligned}
    $$
    With the results above, given $\Delta_f = f(x^{0}) - f^*$, we have
    $$
    \begin{aligned}
        -\Delta_f &\le -\frac{n \gamma}{4}\sum_{t=0}^{T}\expect\norm{\nabla f(x_1^{(t)})}_2^2 + \frac{ L}{2n}\sum_{t=0}^{T} \expect \| \Bar{\bX}^{(t+1)} - \Bar{\bX}^{(t)} \|_F^2\\
        & \qquad  + 5\gamma d^2 L^2 \sum_{t=0}^{T} \expect\norm{\bPi_{\bu}\bX^{(t)}}_F^2 + 3\gamma d^2L^2 \sum_{t=0}^{T} \expect\norm{\bar{\bX}^{(t+1)} - \bar{\bX}^{(t)}}_F^2.
    \end{aligned}
    $$
    Invoking \cref{l.X_diff}, we have for $\gamma\le \frac{1}{100nd^3 L} (\le \frac{1}{10ndL})$ that
    $$
    \begin{aligned}
        -\Delta_f &\le n \gamma\prt{45\gamma^2 n^2 d^2L^2 +8\gamma n  L -\frac{1}{4}}\sum_{t=0}^{T}\expect\norm{\nabla f(x_1^{(t)})}_2^2 \\
        & \qquad  + \prt{150\gamma^3 n^2 d^4 L^4+25\gamma^2 n d^2 L^3 + 5\gamma d^2 L^2}\sum_{t=0}^{T} \expect\norm{\bPi_{\bu}\bX^{(t)}}_F^2 \\
        & \qquad + 3\gamma^2 n (T+1) L \sigma^2 + 18\gamma^3 n^2 d^2 L^2 (T+1) \sigma^2 \\
        & \qquad + \prt{18\gamma^3 n^2 d^3L^2 +3\gamma^2 n L} \norm{\nabla \bF(\bX^{(0)})}_F^2,
    \end{aligned}
    $$
    where it holds that $150\gamma^3 n^2 d^4 L^4+25\gamma^2 n d^2 L^3 + 5\gamma d^2 L^2 \le 6\gamma d^2L^2$.
    
    Invoking \cref{l.PiX}, we have for $\gamma\le \frac{1}{100nd^3 L} (\le \frac{1}{40nd^2 L})$ that
    $$
    \begin{aligned}
        -\Delta_f &\le n \gamma\prt{120\gamma^2n^2d^4 L^2+45\gamma^2 n^2 d^2L^2 +8\gamma n L -\frac{1}{4}}\sum_{t=0}^{T}\expect\norm{\nabla f(x_1^{(t)})}_2^2 \\
        & \qquad + 3\gamma^2 n (T+1) L \sigma^2 + 18\gamma^3 n^2 d^2 L^2 (T+1) \sigma^2 + 1800 \gamma^3 n^2 d^6(T+1)\sigma^2 L^2 \\
        & \qquad + \prt{240\gamma^3 n^2 d^5 L^2 + 18\gamma^3 n^2 d^3L^2 +3\gamma^2 n L} \norm{\nabla \bF(\bX^{(0)})}_F^2 \\
        & \qquad  + 36\gamma nd^3 L^2 \norm{\bPi_{\bu}\bX^{(0)}}_F^2.
    \end{aligned}
    $$
    Thus, for $\gamma\le \frac{1}{100n d^3 L}$, we have
    $$
    \begin{aligned}
    & 120\gamma^2n^2d^4L^2+45\gamma^2 n^2 d^2L^2 +8\gamma n L -\frac{1}{4} \le - \frac{1}{8}        \\
    & \gamma n \prt{240\gamma^2 n d^5 L^2 + 18\gamma^2 n d^3L^2 +3\gamma L } \le 7\gamma^2 n d^2 L.
    \end{aligned}
    $$ 
    After re-arranging the terms, we conclude that
    $$
    \begin{aligned}
        &\frac{1}{T+1} \sum_{t=0}^{T}\expect\norm{\nabla f(x_1^{(t)})}_2^2 \le \frac{8\Delta_f}{\gamma n (T+1)} + 24\gamma  \sigma^2 L+ 20000\gamma^2 n d^6 \sigma^2 L^2 \\
        &\qquad + \frac{400 d^3 L^2\norm{\bPi_{\bu}\bX^{(0)}}_F^2 }{T+1} + \frac{56 \gamma d^3 L\norm{\nabla \bF(\bX^{(0)})}_F^2}{T+1}.
    \end{aligned}
    $$
\end{proof}

%%%%%%%%%%%%%%%%%%%%%%%%%%%%%%%%%%%%%%%%%%%%%%%%%%%%%%%%%%%%%%%%%%%%%%%%%%%%%%%%%%%%%%%%%%%%%%%%%
\subsubsection{Proof of Lemma \ref{l.x-star}}
\label{pf.l.x-star}
 Let $x^* = \arg\min_x f(x)$. We start with analyzing the behavior of $\|x_1^{(t)} -  x^*\|^2$ after obtaining \cref{l.inner}. It holds that
    \begin{equation}
    \label{eq:x-star1}
        \left\|x_1^{(t+1)} - x^* \right\|^2 = \left\|x_1^{(t)} - x^* \right\|^2 + 2 \left\langle x_1^{(t)} - x^*, x_1^{(t+1)} - x_1^{(t)} \right\rangle + \left\|x_1^{(t+1)} - x_1^{(t)} \right\|^2.
    \end{equation}
    To deal with the critical inner product, similar to the decomposition in Equation \eqref{eq:fbarX-1}, we have, by replacing $\nabla f(x_1^{(t)})$ with $x_1^{(t)} - x^*$ in Equation \eqref{eq:lem:inner}, and invoking \cref{l.inner} as we have done in \cref{l.fbarX}, that
    \begin{equation}
        \begin{aligned}
            & \left\langle x_1^{(t)} - x^*, x_1^{(t+1)} - x_1^{(t)} \right\rangle \\
            = & -\left\langle x_1^{(t)} - x^*,\gamma \left(\frac{\mathbf{u}^\top}{n} - \mathbf{1}^\top \right)\mathbf{\Pi}_{\mathbf{v}}\mathbf{Y}^{(t)} \right\rangle - \left\langle x_1^{(t)} - x^*,  \gamma \mathbf{1}^\top \mathbf{Y}^{(t)} \right\rangle.
        \end{aligned}
    \end{equation}
    and
    \begin{equation}
        \begin{aligned}
            &-\mathbb{E}\left\langle x_1^{(t)} - x^*,\gamma \left(\frac{\mathbf{u}^\top}{n} - \mathbf{1}^\top \right)\mathbf{\Pi}_{\mathbf{v}}\mathbf{Y}^{(t)} \right\rangle\\
            = & - \gamma \mathbb{E} \left\langle x_1^{(t)} - x^*,\left(\frac{\mathbf{u}^\top}{n} - \mathbf{1}^\top \right) \nabla \mathbf{F}(\mathbf{X}^{(t)}) \right\rangle\\
            & - \gamma \mathbb{E} \left\langle x_1^{(t)} - x^*,\left(\frac{\mathbf{u}^\top}{n} - \mathbf{1}^\top \right)\sum_{m=1}^{\min\{t,d\} } \mathbf{A}_m\nabla \mathbf{F}(\mathbf{X}^{(t)}) \right\rangle \\
            \le & \frac{n\gamma \mu}{4} \mathbb{E} \left\|x_1^{(t)} - x^* \right\|^2 
            + \frac{3dL^2}{\mu} \gamma \sum_{m=1}^{ \min\{t,d\} } \mathbb{E} \left( \left\|\mathbf{\Pi}_{\mathbf{u}} \mathbf{X}^{t-m+1}  \right\|^2 + \left\|\mathbf{\Pi}_{\mathbf{u}} \mathbf{X}^{t-m}  \right\|^2 + \left\|\bar{\mathbf{X}}^{t-m+1} - \bar{\mathbf{X}}^{t-m} \right\|^2\right).
        \end{aligned}
    \end{equation}
    Notice that, first by strong convexity of $f$ (\cref{a.convex}) and then by $L$-smoothness (\cref{a.smooth}), there holds 
    \begin{equation}
    \begin{aligned}
        &-  \left\langle x_1^{(t)} - x^*,  \nabla f(x_1^{(t)}) \right\rangle \le f(x^*)-f(x_1^{(t)})  - \frac{\mu}{2} \left\|x_1^{(t)} - x^* \right\|^2 \\
        \le & -\frac{1}{2L} \left\|\nabla f(x_1^{(t)} ) \right\|^2  - \frac{\mu}{2} \left\|x_1^{(t)} - x^* \right\|^2.
    \end{aligned}
    \end{equation}
    Then, 
    \begin{equation}
        \begin{aligned}
            & - \gamma \mathbb{E} \left\langle x_1^{(t)} - x^*,  \mathbf{1}^\top \mathbf{Y}^{(t)} \right\rangle = - \gamma \mathbb{E} \left\langle x_1^{(t)} - x^*,  \mathbf{1}^\top \nabla \mathbf{F}(\mathbf{X}^{(t)}) \right\rangle \\
            = & - n \gamma \mathbb{E} \left\langle x_1^{(t)} - x^*,  \nabla f(x_1^{(t)}) \right\rangle - \gamma \mathbb{E} \left\langle x_1^{(t)} - x^*,  \mathbf{1}^\top \nabla \mathbf{F}(\mathbf{X}^{(t)}) - \mathbf{1}^\top \nabla \mathbf{F}(\bar{\mathbf{X}}^{(t)}) \right\rangle \\
            \le & -\frac{n \gamma \mu}{2}\mathbb{E}\left\|x_1^{(t)} - x^* \right\|^2-\frac{n\gamma}{2L} \left\|\nabla f(x_1^{(t)} ) \right\|^2 + \frac{n\gamma \mu}{8} \mathbb{E}\left\|x_1^{(t)} - x^* \right\|^2 + \frac{2L^2\gamma}{\mu}\mathbb{E} \left\|\mathbf{\Pi}_{\mathbf{u}} \mathbf{X}^{(t)}  \right\|^2.
        \end{aligned}
    \end{equation}
    Thus, plugging the above results into Equation \eqref{eq:x-star1}, with $\kappa:=L/\mu$ as the conditional number, it holds that
    \begin{equation}
    \begin{aligned}
    & \mathbb{E}\left\|x_1^{(t+1)} - x^* \right\|^2 \le \left(1 - \frac{n\gamma \mu}{4} \right)\mathbb{E}\left\|x_1^{(t)} - x^* \right\|^2  -\frac{n\gamma}{L} \mathbb{E}\left\|\nabla f(x_1^{(t)} ) \right\|^2 + \frac{1}{n}\mathbb{E}\left\|\bar{\mathbf{X}}^{(t+1)} - \bar{\mathbf{X}}^{(t)} \right\|^2_F \\
    &\quad + 6 d\kappa L \gamma \sum_{m=1}^{ \min\{t,d\} } \mathbb{E}\left\|\bar{\mathbf{X}}^{(t-m+1)} - \bar{\mathbf{X}}^{(t-m)} \right\|^2_F + 20d\kappa L \gamma \sum_{m=0}^{ \min\{t,d\}+1 } \mathbb{E} \left\|\mathbf{\Pi}_{\mathbf{u}} \mathbf{X}^{t-m}  \right\|^2_F.
    \end{aligned}
    \end{equation}
    We derive the convergence result by several standard steps as follows.\\
    \textbf{Step 1}. Unwinding the above recursion, it follows by $\frac{1}{2}\le 1 - \frac{n\gamma \mu}{4} \le 1$ for $\gamma \le \frac{1}{10 nd^2\kappa L}$ that
\begin{equation}
    \begin{aligned}
    &\mathbb{E}\left\|x_1^{(T)} - x^* \right\|^2 \le \left(1 - \frac{n\gamma \mu}{4} \right)^{T}\left\|x_1^{(0)} - x^* \right\|^2  -\frac{n\gamma}{2L} \sum_{t=0}^{T}\mathbb{E}\left\|\nabla f(x_1^{(t)} ) \right\|^2 \\
    &\quad + \left(\frac{1}{n} + 6d^2\kappa L \gamma \right)\sum_{t=0}^{T}\prt{1-\frac{n\gamma\mu}{4}}^{T-t}\mathbb{E}\left\|\bar{\mathbf{X}}^{(t+1)} - \bar{\mathbf{X}}^{(t)} \right\|^2_F \\
    & \quad + 60d^2\kappa L \gamma \sum_{t=0}^{T}\prt{1-\frac{n\gamma\mu}{4}}^{T-t} \mathbb{E} \left\|\mathbf{\Pi}_{\mathbf{u}} \mathbf{X}^{t}  \right\|^2_F.
    \end{aligned}
\end{equation}
\textbf{Step 2}. To refine \cref{l.X_diff}, we start with Equation \eqref{eq:refine-X-diff} multiplied by the coefficient $\prt{1-\frac{n\gamma\mu}{4}}^{T-t}$ before summing over $t$ in Equation \eqref{eq:diff_X_1} from $0$ to $T$. Then, we have, for $\gamma \le \frac{1}{10 nd^2\kappa L}$ ($\frac{1}{2}\le 1 - \frac{n\gamma \mu}{4} \le 1$ for $\gamma \le \frac{1}{10 nd^2\kappa L}$), that 
\begin{equation}
    \label{eq:step2-refine}
    \begin{aligned}
        & \sum_{t=0}^{T}  \prt{1-\frac{n\gamma\mu}{4}}^{T-t} \expect\|\bar{\bX}^{(t+1)} - \Bar{\bX}^{(t)}\|_F^2 \le  6\gamma^2 n^2\sigma^2(T+1) + 50\gamma^2n^2d^2L^2 \sum_{t=0}^{T}\prt{1-\frac{n\gamma\mu}{4}}^{T-t}\expect \norm{\bPi_{\bu}\bX^{(t)}}_F^2 \\
        &\qquad + 6\gamma^2 n^2 \sum_{t=0}^{d}\prt{1-\frac{n\gamma\mu}{4}}^{T-t} \norm{\nabla\bF(\bX^{(0)})}_F^2  + 15\gamma^2 n^3\sum_{t=0}^{T}\expect\norm{\nabla f(x_1^{(t)})}_2^2.
    \end{aligned}
\end{equation}
where we get the result which only modifies the coefficient of the term $\norm{\nabla \bF(\bX^{(0)})}_F^2$.

Implementing the above result, we have, for $\gamma\le \frac{1}{10 nd^2\kappa L} \prt{\le \frac{1}{10 n d L}} $,
$$
\begin{aligned}
    &\mathbb{E}\left\|x_1^{(T)} - x^* \right\|^2 \le \left(1 - \frac{n\gamma \mu}{4} \right)^{T}\left\|x_1^{(0)} - x^* \right\|^2 \\
    & \quad + \prt{-\frac{n\gamma}{2L} + 15\gamma^2 n^2   + 90\gamma^3 n^3 d^2 \kappa L}\sum_{t=0}^{T}\mathbb{E}\left\|\nabla f(x_1^{(t)} ) \right\|^2 \\
    & \quad + 6\gamma^2 n \sigma^2 \prt{T+1} + 36\gamma^3 n^2 d^2 \kappa L \sigma^2 \prt{T+1}\\
    & \quad + \prt{60\gamma d^2\kappa L  + 50\gamma^2 n d^2 L^2 + 300 \gamma^3 n^2 d^4\kappa L^3 }\sum_{t=0}^{T}\prt{1-\frac{n\gamma\mu}{4}}^{T-t}\mathbb{E} \left\|\mathbf{\Pi}_{\mathbf{u}} \mathbf{X}^{t}  \right\|^2_F\\
    &\quad + \prt{6\gamma^2 n d + 36 \gamma^3 n^2 d^3 \kappa L}\prt{1-\frac{n\gamma\mu}{4}}^{T-d}\norm{\nabla \bF(\bX^{(0)})}.
\end{aligned}
$$
\textbf{Step 3}. Similarly, we refine \cref{l.PiX} as follows. By multiplying $\prt{1-\frac{n\gamma\mu}{4}}^{T-t}$ before summing over $t$ in Equation \eqref{eq:ppPiXfinal}, we have
$$
    \begin{aligned}
        & \sum_{t=0}^{T} \prt{1-\frac{n\gamma\mu}{4}}^{T-t}\norm{\bPi_{\bu}\bX^{(t)}}_F^2 
        \le  300 \gamma^2 n^2 d^4 (T+1) \sigma^2 + 20\gamma^2 n^3 d^2 \sum_{t=0}^{T}\expect  \|\nabla f(x_1^{(t)})\|_2^2 \\
        & \qquad \qquad + 6nd\norm{\bPi_{\bu}\bX^{(0)}}_F^2 + 40\gamma^2 n^2 d^3\prt{1-\frac{n\gamma\mu}{4}}^{T-d} \norm{\nabla \bF(\bX^{(0)})}_F^2,
    \end{aligned}
$$

Implementing the above result, we have for $\gamma \le \frac{1}{40 n d^2 \kappa L}$ that
$$
60\gamma d^2\kappa L  + 50\gamma^2 n d^2 L^2 + 300 \gamma^3 n^2 d^4\kappa L^3 \le 70 \gamma d^2\kappa L,
$$
and
$$
\begin{aligned}
     &\mathbb{E}\left\|x_1^{(T)} - x^* \right\|^2 \le \left(1 - \frac{n\gamma \mu}{4} \right)^{T}\left\|x_1^{(0)} - x^* \right\|^2 \\
     & \quad + \prt{-\frac{n\gamma}{2L} + 15\gamma^2 n^2   + 90\gamma^3 n^3 d^2 \kappa L + 1400\gamma^3 n^3 d^4 \kappa L }\sum_{t=0}^{T}\mathbb{E}\left\|\nabla f(x_1^{(t)} ) \right\|^2 \\
     & \quad + 6\gamma^2 n \sigma^2 \prt{T+1} + 36\gamma^3 n^2 d^2 \kappa L \sigma^2 \prt{T+1} + 21000 \gamma^3 n^2  d^6 \kappa L\sigma^2\prt{T+1}\\
     &\quad + \prt{6\gamma^2 n d + 36 \gamma^3 n^2 d^3 \kappa L + 2800\gamma^3n^2 d^5\kappa L}\prt{1-\frac{n\gamma\mu}{4}}^{T-d}\norm{\nabla \bF(\bX^{(0)})} + 420\gamma^3 n^2 d^3 \kappa L \norm{\bPi_{\bu}\bX^{(0)}}_F^2 .
\end{aligned}
$$
\textbf{Step 4}. Note that the coefficient of the term $ \sum_{t=0}^{T}\left\|\nabla f(x_1^{(t)} ) \right\|^2$ is smaller than $0$ for $\gamma \le \frac{1}{100 n d^2 \kappa L}$, derived as follows:
$$
\begin{aligned}
    &-\frac{n\gamma}{2L} + 15\gamma^2 n^2   + 90\gamma^3 n^3 d^2 \kappa L + 1400\gamma^3 n^3 d^4 \kappa L \\
    = & \frac{n\gamma}{2L} \prt{-1 + 30 \gamma n L + 180\gamma^2 n^2 d^2 \kappa L^2 + 2800\gamma^2 n d^4 \kappa L} \\
    \le & \frac{n\gamma}{2L} \prt{-1 + \frac{30}{100} +\frac{180}{10^4} + \frac{2800}{10^4} } \le -\frac{\gamma n}{20L} < 0.
\end{aligned}
$$
Thus, it holds that
$$
\begin{aligned}
     &\mathbb{E}\left\|x_1^{(T)} - x^* \right\|^2 \le \left(1 - \frac{n\gamma \mu}{4} \right)^{T}\left\|x_1^{(0)} - x^* \right\|^2 \\
     & \quad + 7\gamma^2 n \sigma^2 \prt{T+1} + 21000 \gamma^3 n^2  d^6 \kappa L\sigma^2\prt{T+1}\\
     &\quad + 80\gamma^2 n d^3\prt{1-\frac{n\gamma\mu}{4}}^{T-d}\norm{\nabla \bF(\bX^{(0)})} + 420\gamma^3 n^2 d^3 \kappa L \norm{\bPi_{\bu}\bX^{(0)}}_F^2 .
\end{aligned}
$$
\subsection{Proof of the Convergence Results}

\subsubsection{Proof of Theorem \ref{thm:convergence}}
    Invoking \cref{l.fbarX}, with identical initial values $x_i^{(0)}$ that implies $\norm{\bPi_{\bu}\bX^{(0)}}_F^2 = 0$, we have
    % $$
    % \begin{aligned}
    %     &\frac{1}{T+1} \sum_{t=0}^{T}\expect\norm{\nabla f(x_1^{(t)})}_2^2 \le \frac{8\Delta_f}{\gamma n (T+1)} + 24\gamma  \sigma^2 L+ 20000\gamma^2 n d^6 \sigma^2 L^2 \\
    %     &\qquad \qquad  + \frac{56 \gamma d^3 L\norm{\nabla \bF(\bX^{(0)})}_F^2}{T+1}.
    % \end{aligned}
    % $$   
    $$
    \begin{aligned}
        &\frac{1}{T+1} \sum_{t=0}^{T}\expect\norm{\nabla f(x_1^{(t)})}_2^2 \le \frac{8\Delta_f}{\gamma n (T+1)} + 24\gamma  \sigma^2 L+ 2\cdot 10^4\gamma^2 n d^6 \sigma^2 L^2 + \frac{56 \gamma d^3 L\norm{\nabla \bF(\bX^{(0)})}_F^2}{T+1}.
    \end{aligned}
    $$   
    Referring to Lemma 26 in \cite{koloskova2021improved}, as stated in \cref{l.martin}, by taking $A = \frac{8\Delta_f}{n}$, $B = 24\sigma^2 L$, $C = 20000nd^6 \sigma^2 L^2$ and $\alpha = 100nd^3L$,  when considering $\gamma = \min\{\prt{\frac{\Delta_f}{3\sigma^2 L n (T+1)}}^{\frac{1}{2}}, \prt{\frac{\Delta_f}{1500n^2 d^6 \sigma^2 L^2 (T+1)}}^{\frac{1}{3}}, \frac{1}{100nd^3L}\}$, we have
    % $$
    % \begin{aligned}
    %     & \frac{1}{T+1} \sum_{t=0}^{T}\expect\norm{\nabla f(x_1^{(t)})}_2^2 \\
    %     \le \quad & \frac{32 \sqrt{\Delta_f \sigma^2 L}}{\sqrt{n(T+1)}} + \frac{240d^2 \prt{\sigma^2 L^2 \Delta_f^2}^{\frac{1}{3}}}{\prt{\sqrt{n} (T+1)}^{\frac{2}{3}}   } + \frac{800d^3 L \Delta_f }{T+1}  + \frac{\norm{\nabla \bF(\bX^{(0)})}_F^2}{n(T+1)}.
    % \end{aligned}
    % $$
    $$
    \begin{aligned}
        & \frac{1}{T+1} \sum_{t=0}^{T}\expect\norm{\nabla f(x_1^{(t)})}_2^2 
        \le \frac{32 \sqrt{\Delta_f \sigma^2 L}}{\sqrt{n(T+1)}} + \frac{240d^2 \prt{\sigma^2 L^2 \Delta_f^2}^{\frac{1}{3}}}{\prt{\sqrt{n} (T+1)}^{\frac{2}{3}}   } + \frac{800d^3 L \Delta_f }{T+1}  + \frac{\norm{\nabla \bF(\bX^{(0)})}_F^2}{n(T+1)}.
    \end{aligned}
    $$
    
%%%%%%%%%%%%%%%%%%%%%%%%%%%%%%%%%%%%%%%%%%%%%%%%%%%%%%%%%%%%%%%%%%%%%%%%%%%%%%%%%%%%%%%%%%%%%%%%%

\subsubsection{Proof of Theorem \ref{thm:convergence-convex}}
 Invoking \cref{l.x-star}, with identical values $x_i^{(0)}$ that implies $\norm{\bPi_{\bu}\bX^{(0)}}_F^2=0$, we have 
$$
\begin{aligned}
     &\mathbb{E}\left\|x_1^{(T)} - x^* \right\|^2 \le \left(1 - \frac{n\gamma \mu}{4} \right)^{T}\left\|x_1^{(0)} - x^* \right\|^2 \\
     & \quad + 7\gamma^2 n \sigma^2 \prt{T+1} + 21000 \gamma^3 n^2  d^6 \kappa L\sigma^2\prt{T+1}\\
     &\quad + 80\gamma^2 n d^3\prt{1-\frac{n\gamma\mu}{4}}^{T-d}\norm{\nabla \bF(\bX^{(0)})} .
\end{aligned}
$$
Considering $\gamma = \min\crk{\frac{1}{100nd^2\kappa L}, \frac{16\log(n\prt{T+1}^2)}{n\prt{T+1}\mu}}$, for $T\ge 2d$ we get that
$$
\begin{aligned}
\prt{1-\frac{n\mu\gamma}{4}}^T \le &  \max\crk{\prt{1-\frac{1}{400d^2\kappa^2}}^T, \prt{1-\frac{4\log(n\prt{T+1}^2)}{T+1}}^T}\\
\le & \max\crk{\exp(-\frac{T}{400d^2\kappa^2}), \frac{40}{n\prt{T+1}^2}},
\end{aligned}
$$
and
$$
\begin{aligned}
\prt{1-\frac{n\mu\gamma}{4}}^{T-d} \le &  \max\crk{\prt{1-\frac{1}{400d^2\kappa^2}}^{T/2}, \prt{1-\frac{4\log(n\prt{T+1}^2)}{T+1}}^{T/2}}\\
\le & \max\crk{\exp(-\frac{T}{800d^2\kappa^2}), \frac{10}{n\prt{T+1}^2}},
\end{aligned}
$$
where we use the fact $(1-\frac{1}{x})^{x} \le e^{-1}$ and $1-x\le \exp(-x)$ for any $x\in\reals_{+}$. Then,
$$
\begin{aligned}
    &\mathbb{E}\left\|x_1^{(T)} - x^* \right\|^2 \le  \frac{2240\sigma^2 \log(n(T+1)^2)}{n\prt{T+1}\mu^2} + \frac{26880000 d^6 \kappa^2 \sigma^2\prt{\log(n(T+1)^2)}^2}{n\prt{T+1}^2\mu^2} \\
    & \quad + \max\crk{\exp(-\frac{T}{800d^2\kappa^2}), \frac{40}{n\prt{T+1}^2}}\prt{\left\|x_1^{(0)} - x^* \right\|^2 + \frac{1}{nL^2}\norm{\nabla\bF(\bX^{(0)})}_F^2  }.
\end{aligned}
$$
%%%%%%%%%%%%%%%%%%%%%%%%%%%%%%%%%%%%%%%%%%%%%%%%%%%%%%%%%%%%%%%%%%%%%%%%%%%%%%%%%%%%%%%%%%%%%%%%%

%%%%%%%%%%%%%%%%%%%%%%%%%%%%%%%%%%%%%%%%%%%%%%%%%%%%%%%%%%%%%%%%%%%%%%%%%%%%%%%%%%%%%%%%%%%%%%%%%
\section{Additional Experiments}
\label{appendix:exp}
For the problem of training a CNN on the MNIST dataset, we have further compared the real-time performance of BTPP with other representative methods. The experiments are conducted on a server equipped with eight Nvidia RTX 3090 GPUs and two Intel Xeon Gold 4310 CPUs, where the communication between GPUs follows the topology requirement of each algorithm. We measure the running time including GPU computation and communication for 13,000 iterations. The experimental settings are consistent with those described in \cref{sec:deep}.
\begin{figure}[ht]
    \centering
    % First minipage for the first figure
    \begin{minipage}{0.49\textwidth}
        \centering
        \includegraphics[width=\linewidth]{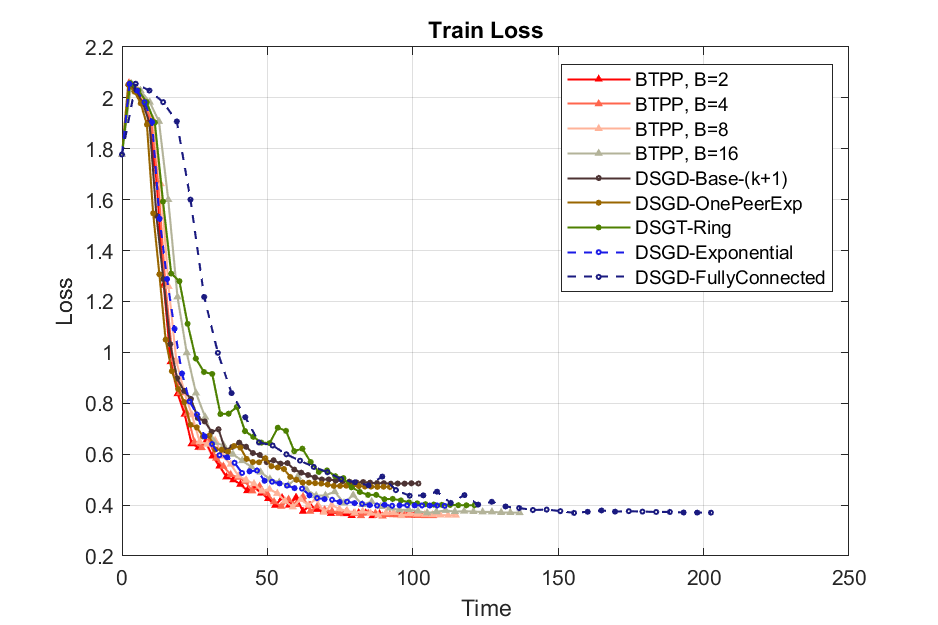} % Path to your first figure
    \end{minipage}
    \hfill % Optional space between the figures
    % Second minipage for the second figure
    \begin{minipage}{0.49\textwidth}
        \centering
        \includegraphics[width=\linewidth]{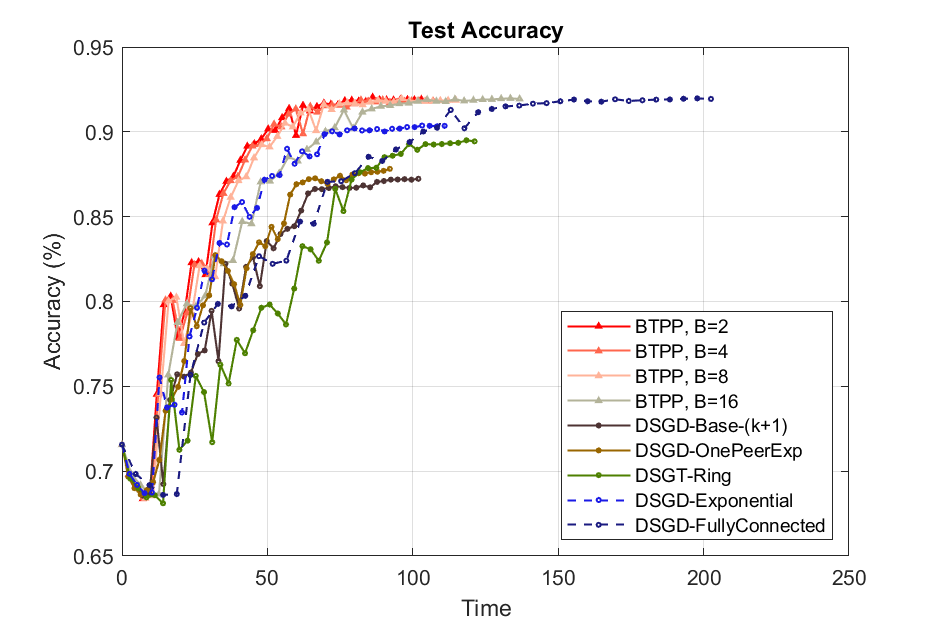} % Path to your second figure
    \end{minipage}
    \caption{Real-time performance of BTPP (with different branch size $B$) compared with related methods when training CNN over MNIST.}
    \label{fig:mnist-real-time}
\end{figure}
 From \cref{fig:mnist-real-time}, BTPP outperforms the other algorithms concerning the running time. Additionally, we evaluate BTPP with various branch sizes $B$, concluding that for relatively small values of $n$, a branch size of $B=2$ is most effective.
 
 Furthermore, we consider training VGG13 on the CIFAR10 dataset, with $n=8$ and a batch size of 16. The learning rate and topology configurations are consistent with those described in \cref{sec:deep}. Additionally, the case of BTPP with $B=8$ is equivalent to DSGD in a fully connected setting, meaning that they produce identical outputs when using the same random seed. \cref{fig:vgg13-iteration} and \cref{fig:vgg13-real-time} illustrate that BTPP beats competing algorithms in terms of the convergence rate (against iteration number) and running time. Moreover, a branch size of $B=2$ is optimal.
 \begin{figure}[ht]
    \centering
    % First minipage for the first figure
    \begin{minipage}{0.49\textwidth}
        \centering
        \includegraphics[width=\linewidth]{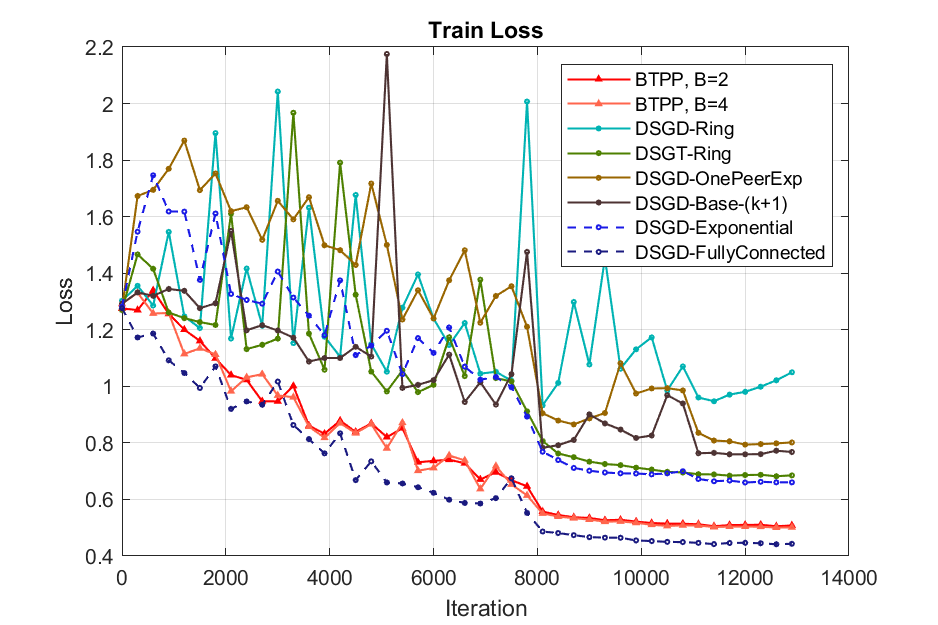} % Path to your first figure
    \end{minipage}
    \hfill % Optional space between the figures
    % Second minipage for the second figure
    \begin{minipage}{0.49\textwidth}
        \centering
        \includegraphics[width=\linewidth]{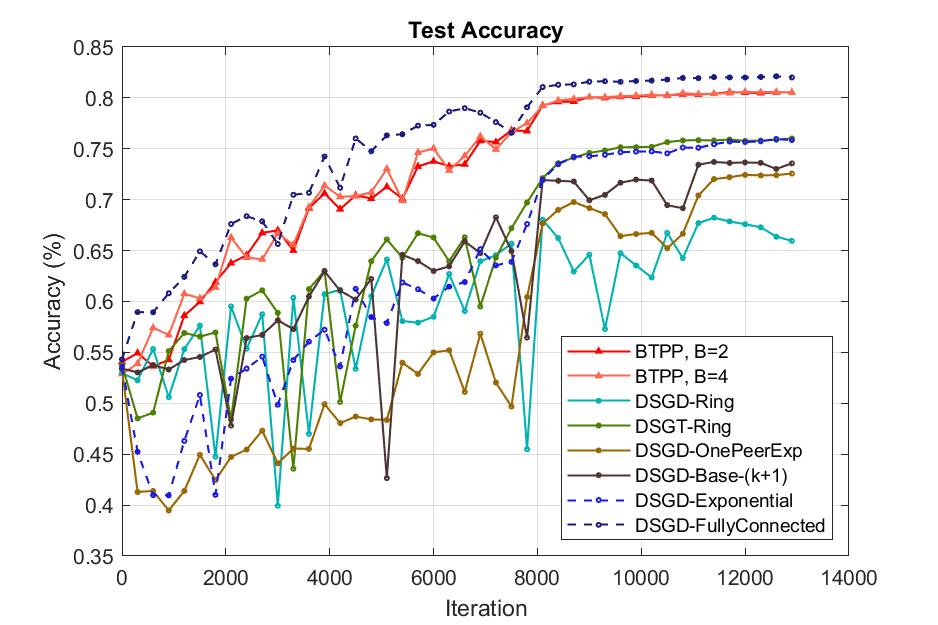} % Path to your second figure
    \end{minipage}
    \caption{Performance of BTPP (with different branch size B) compared with related methods for training VGG13 over CIFAR10.}
    \label{fig:vgg13-iteration}
\end{figure}
\begin{figure}[ht]
    \centering
    % First minipage for the first figure
    \begin{minipage}{0.49\textwidth}
        \centering
        \includegraphics[width=\linewidth]{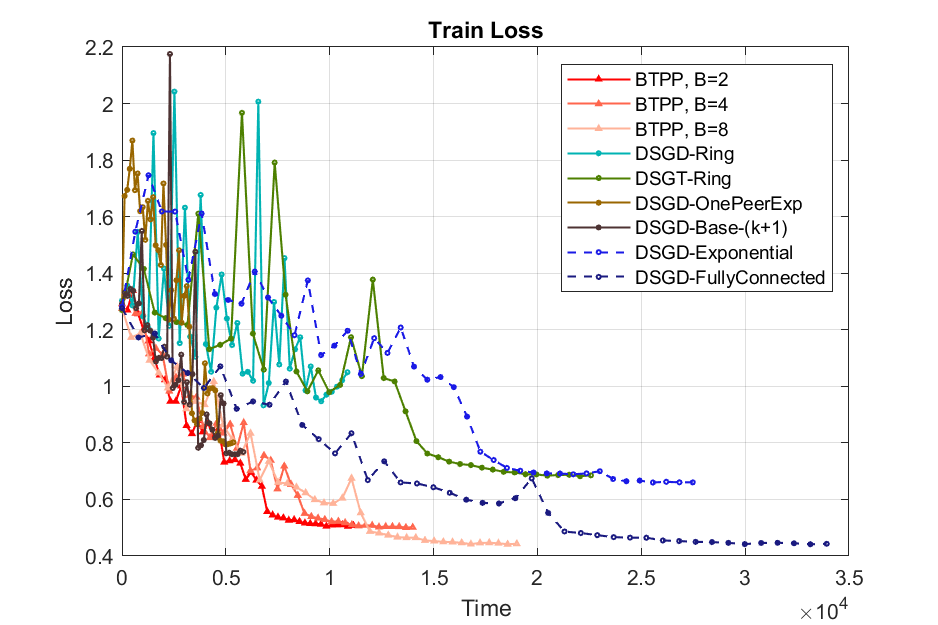} % Path to your first figure
    \end{minipage}
    \hfill % Optional space between the figures
    % Second minipage for the second figure
    \begin{minipage}{0.49\textwidth}
        \centering
        \includegraphics[width=\linewidth]{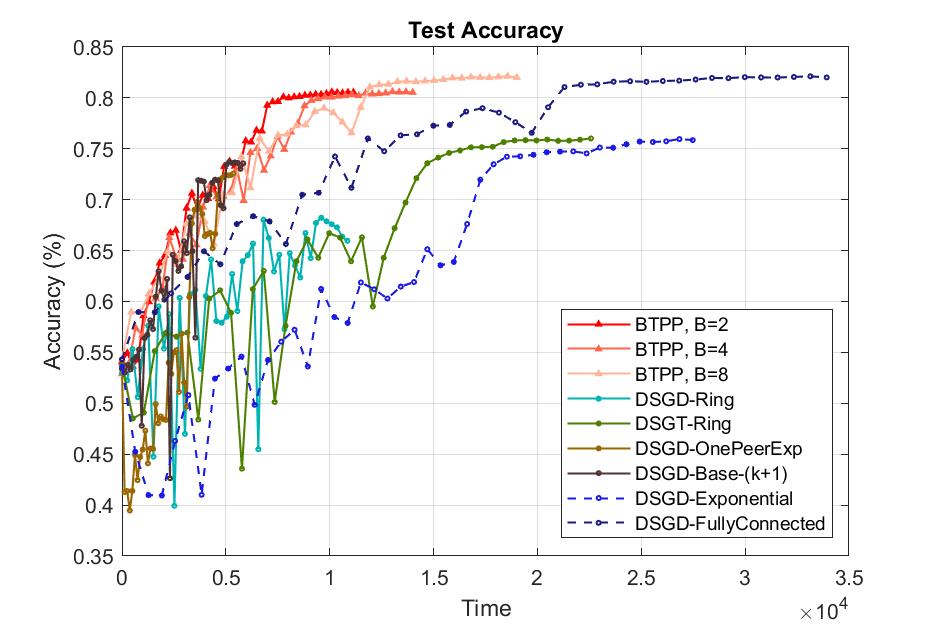} % Path to your second figure
    \end{minipage}
    \caption{Real-time performance of BTPP (with different branch size $B$) compared with related methods when training VGG13 over CIFAR10.}
    \label{fig:vgg13-real-time}
\end{figure}

We further demonstrate that the performance of BTPP can be improved by incorporating a momentum term (with momentum parameter set to 0.9) when data heterogeneity exists or by removing the data heterogeneity, which involves randomly assigning samples to each agent; see \cref{fig:cnn_compare} and \cref{fig:vgg_compare}.

 \begin{figure}[!htbp]
    \centering
    % First minipage for the first figure
    \begin{minipage}{0.49\textwidth}
        \centering
        \includegraphics[width=\linewidth]{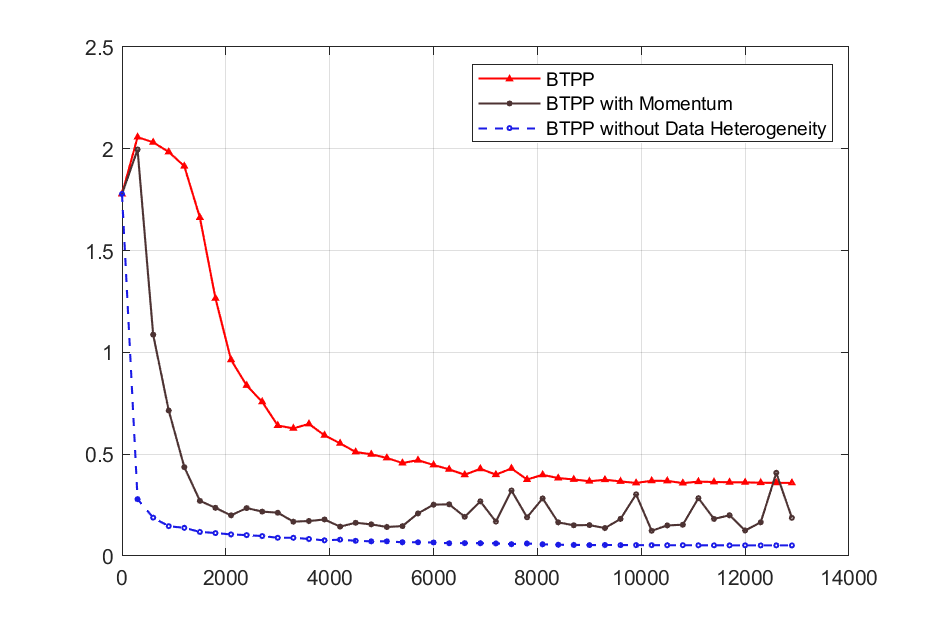} % Path to your first figure
    \end{minipage}
    \hfill % Optional space between the figures
    % Second minipage for the second figure
    \begin{minipage}{0.49\textwidth}
        \centering
        \includegraphics[width=\linewidth]{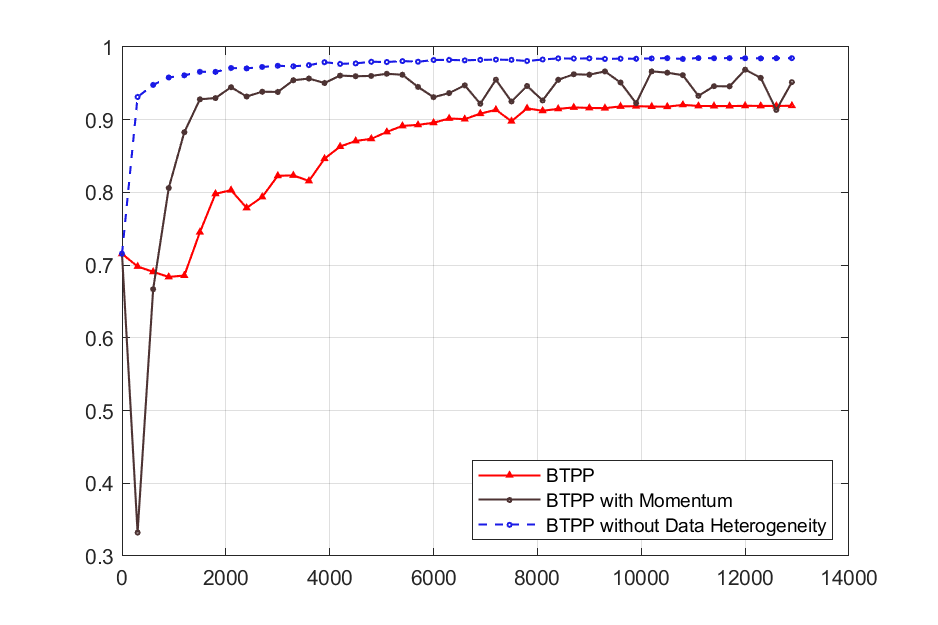} % Path to your second figure
    \end{minipage}
    \caption{Performance of BTPP with branch size $B=2$ under various configurations when training CNN over MNIST.}
    \label{fig:cnn_compare}
\end{figure}
 \begin{figure}[!htbp]
    \centering
    % First minipage for the first figure
    \begin{minipage}{0.49\textwidth}
        \centering
        \includegraphics[width=\linewidth]{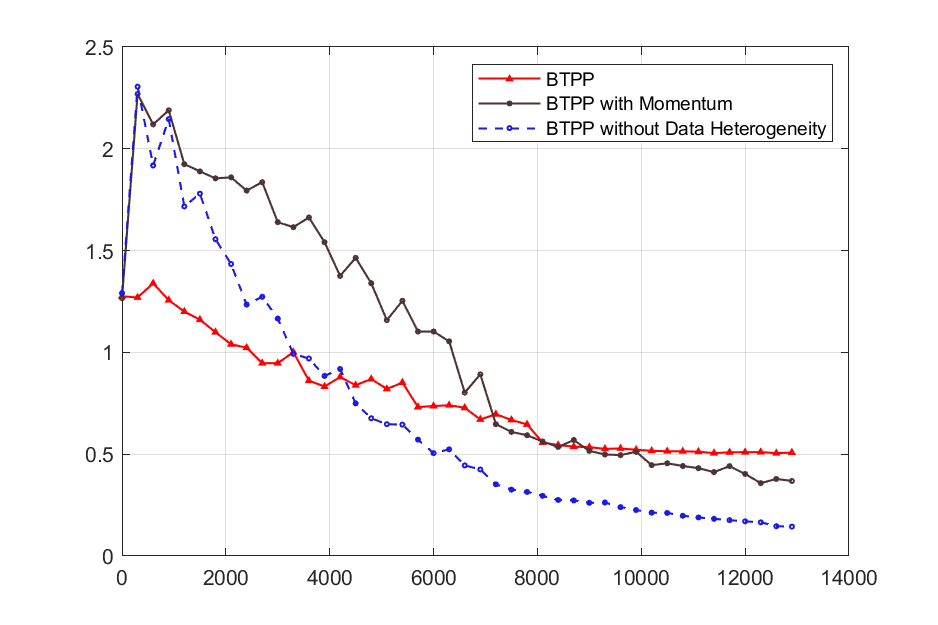} % Path to your first figure
    \end{minipage}
    \hfill % Optional space between the figures
    % Second minipage for the second figure
    \begin{minipage}{0.49\textwidth}
        \centering
        \includegraphics[width=\linewidth]{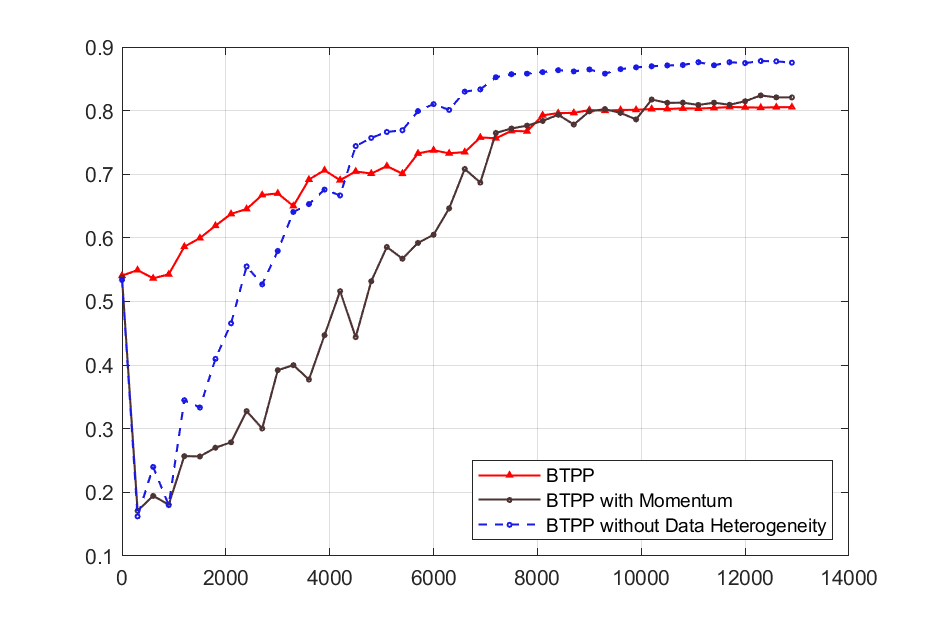} % Path to your second figure
    \end{minipage}
    \caption{Performance of BTPP with branch size $B=2$ under various configurations when training VGG13 over CIFAR10.}
    \label{fig:vgg_compare}
\end{figure}

%%%%%%%%%%%%%%%%%%%%%%%%%%%%%%%%%%%%%%%%%%%%%%%%%%%%%%%%%%%%

\newpage
\section*{NeurIPS Paper Checklist}

\begin{enumerate}

\item {\bf Claims}
    \item[] Question: Do the main claims made in the abstract and introduction accurately reflect the paper's contributions and scope?
    \item[] Answer:  \answerYes{} % Replace by \answerYes{}, \answerNo{}, or \answerNA{}.
    \item[] Justification:  Our paper meticulously delineates its primary contributions in both the abstract (lines 8-9) and the introduction (subsection 1.2, lines 105-121).
    % that we give a novel distributed algorithm, which is efficient (lower transient iterations than other algorithms on $\Theta(1)$ per-iteration communications) and handles the heterogeneous data. 
    \item[] Guidelines:
    \begin{itemize}
        \item The answer NA means that the abstract and introduction do not include the claims made in the paper.
        \item The abstract and/or introduction should clearly state the claims made, including the contributions made in the paper and important assumptions and limitations. A No or NA answer to this question will not be perceived well by the reviewers. 
        \item The claims made should match theoretical and experimental results, and reflect how much the results can be expected to generalize to other settings. 
        \item It is fine to include aspirational goals as motivation as long as it is clear that these goals are not attained by the paper. 
    \end{itemize}

\item {\bf Limitations}
    \item[] Question: Does the paper discuss the limitations of the work performed by the authors?
    \item[] Answer: \answerYes{} % Replace by \answerYes{}, \answerNo{}, or \answerNA{}.
    \item[] Justification:  At the end of section 1.1, lines 100-104.
    \item[] Guidelines:
    \begin{itemize}
        \item The answer NA means that the paper has no limitation while the answer No means that the paper has limitations, but those are not discussed in the paper. 
        \item The authors are encouraged to create a separate "Limitations" section in their paper.
        \item The paper should point out any strong assumptions and how robust the results are to violations of these assumptions (e.g., independence assumptions, noiseless settings, model well-specification, asymptotic approximations only holding locally). The authors should reflect on how these assumptions might be violated in practice and what the implications would be.
        \item The authors should reflect on the scope of the claims made, e.g., if the approach was only tested on a few datasets or with a few runs. In general, empirical results often depend on implicit assumptions, which should be articulated.
        \item The authors should reflect on the factors that influence the performance of the approach. For example, a facial recognition algorithm may perform poorly when image resolution is low or images are taken in low lighting. Or a speech-to-text system might not be used reliably to provide closed captions for online lectures because it fails to handle technical jargon.
        \item The authors should discuss the computational efficiency of the proposed algorithms and how they scale with dataset size.
        \item If applicable, the authors should discuss possible limitations of their approach to address problems of privacy and fairness.
        \item While the authors might fear that complete honesty about limitations might be used by reviewers as grounds for rejection, a worse outcome might be that reviewers discover limitations that aren't acknowledged in the paper. The authors should use their best judgment and recognize that individual actions in favor of transparency play an important role in developing norms that preserve the integrity of the community. Reviewers will be specifically instructed to not penalize honesty concerning limitations.
    \end{itemize}

\item {\bf Theory Assumptions and Proofs}
    \item[] Question: For each theoretical result, does the paper provide the full set of assumptions and a complete (and correct) proof?
    \item[] Answer: \answerYes{} % Replace by \answerYes{}, \answerNo{}, or \answerNA{}.
    \item[] Justification:  The assumptions are in subsection 1.3, lines 129-132. The proofs of all theoretical results are in the Appendix and subsection 2.1.
    \item[] Guidelines:
    \begin{itemize}
        \item The answer NA means that the paper does not include theoretical results. 
        \item All the theorems, formulas, and proofs in the paper should be numbered and cross-referenced.
        \item All assumptions should be clearly stated or referenced in the statement of any theorems.
        \item The proofs can either appear in the main paper or the supplemental material, but if they appear in the supplemental material, the authors are encouraged to provide a short proof sketch to provide intuition. 
        \item Inversely, any informal proof provided in the core of the paper should be complemented by formal proofs provided in appendix or supplemental material.
        \item Theorems and Lemmas that the proof relies upon should be properly referenced. 
    \end{itemize}

    \item {\bf Experimental Result Reproducibility}
    \item[] Question: Does the paper fully disclose all the information needed to reproduce the main experimental results of the paper to the extent that it affects the main claims and/or conclusions of the paper (regardless of whether the code and data are provided or not)?
    \item[] Answer: \answerYes{} % Replace by \answerYes{}, \answerNo{}, or \answerNA{}.
    \item[] Justification:  We present the experiment details in section 4 and upload our code in the link shown in the abstract.
    \item[] Guidelines:
    \begin{itemize}
        \item The answer NA means that the paper does not include experiments.
        \item If the paper includes experiments, a No answer to this question will not be perceived well by the reviewers: Making the paper reproducible is important, regardless of whether the code and data are provided or not.
        \item If the contribution is a dataset and/or model, the authors should describe the steps taken to make their results reproducible or verifiable. 
        \item Depending on the contribution, reproducibility can be accomplished in various ways. For example, if the contribution is a novel architecture, describing the architecture fully might suffice, or if the contribution is a specific model and empirical evaluation, it may be necessary to either make it possible for others to replicate the model with the same dataset, or provide access to the model. In general. releasing code and data is often one good way to accomplish this, but reproducibility can also be provided via detailed instructions for how to replicate the results, access to a hosted model (e.g., in the case of a large language model), releasing of a model checkpoint, or other means that are appropriate to the research performed.
        \item While NeurIPS does not require releasing code, the conference does require all submissions to provide some reasonable avenue for reproducibility, which may depend on the nature of the contribution. For example
        \begin{enumerate}
            \item If the contribution is primarily a new algorithm, the paper should make it clear how to reproduce that algorithm.
            \item If the contribution is primarily a new model architecture, the paper should describe the architecture clearly and fully.
            \item If the contribution is a new model (e.g., a large language model), then there should either be a way to access this model for reproducing the results or a way to reproduce the model (e.g., with an open-source dataset or instructions for how to construct the dataset).
            \item We recognize that reproducibility may be tricky in some cases, in which case authors are welcome to describe the particular way they provide for reproducibility. In the case of closed-source models, it may be that access to the model is limited in some way (e.g., to registered users), but it should be possible for other researchers to have some path to reproducing or verifying the results.
        \end{enumerate}
    \end{itemize}

\item {\bf Open access to data and code}
    \item[] Question: Does the paper provide open access to the data and code, with sufficient instructions to faithfully reproduce the main experimental results, as described in supplemental material?
    \item[] Answer: \answerYes{} % Replace by \answerYes{}, \answerNo{}, or \answerNA{}.
    \item[] Justification: We provide our code with the link shown in lines 10-11.
    \item[] Guidelines:
    \begin{itemize}
        \item The answer NA means that paper does not include experiments requiring code.
        \item Please see the NeurIPS code and data submission guidelines (\url{https://nips.cc/public/guides/CodeSubmissionPolicy}) for more details.
        \item While we encourage the release of code and data, we understand that this might not be possible, so “No” is an acceptable answer. Papers cannot be rejected simply for not including code, unless this is central to the contribution (e.g., for a new open-source benchmark).
        \item The instructions should contain the exact command and environment needed to run to reproduce the results. See the NeurIPS code and data submission guidelines (\url{https://nips.cc/public/guides/CodeSubmissionPolicy}) for more details.
        \item The authors should provide instructions on data access and preparation, including how to access the raw data, preprocessed data, intermediate data, and generated data, etc.
        \item The authors should provide scripts to reproduce all experimental results for the new proposed method and baselines. If only a subset of experiments are reproducible, they should state which ones are omitted from the script and why.
        \item At submission time, to preserve anonymity, the authors should release anonymized versions (if applicable).
        \item Providing as much information as possible in supplemental material (appended to the paper) is recommended, but including URLs to data and code is permitted.
    \end{itemize}

\item {\bf Experimental Setting/Details}
    \item[] Question: Does the paper specify all the training and test details (e.g., data splits, hyperparameters, how they were chosen, type of optimizer, etc.) necessary to understand the results?
    \item[] Answer: \answerYes{} % Replace by \answerYes{}, \answerNo{}, or \answerNA{}.
    \item[] Justification: We provide all the experimental details and settings in Section 4 and our code link.
    \item[] Guidelines:
    \begin{itemize}
        \item The answer NA means that the paper does not include experiments.
        \item The experimental setting should be presented in the core of the paper to a level of detail that is necessary to appreciate the results and make sense of them.
        \item The full details can be provided either with the code, in appendix, or as supplemental material.
    \end{itemize}

\item {\bf Experiment Statistical Significance}
    \item[] Question: Does the paper report error bars suitably and correctly defined or other appropriate information about the statistical significance of the experiments?
    \item[] Answer: \answerNo{} % Replace by \answerYes{}, \answerNo{}, or \answerNA{}.
    \item[] Justification: We have repeated some experiments with different seeds
and reported the averaged performance. See Section 4 for details.
    \item[] Guidelines:
    \begin{itemize}
        \item The answer NA means that the paper does not include experiments.
        \item The authors should answer "Yes" if the results are accompanied by error bars, confidence intervals, or statistical significance tests, at least for the experiments that support the main claims of the paper.
        \item The factors of variability that the error bars are capturing should be clearly stated (for example, train/test split, initialization, random drawing of some parameter, or overall run with given experimental conditions).
        \item The method for calculating the error bars should be explained (closed form formula, call to a library function, bootstrap, etc.)
        \item The assumptions made should be given (e.g., Normally distributed errors).
        \item It should be clear whether the error bar is the standard deviation or the standard error of the mean.
        \item It is OK to report 1-sigma error bars, but one should state it. The authors should preferably report a 2-sigma error bar than state that they have a 96\% CI, if the hypothesis of Normality of errors is not verified.
        \item For asymmetric distributions, the authors should be careful not to show in tables or figures symmetric error bars that would yield results that are out of range (e.g. negative error rates).
        \item If error bars are reported in tables or plots, The authors should explain in the text how they were calculated and reference the corresponding figures or tables in the text.
    \end{itemize}

\item {\bf Experiments Compute Resources}
    \item[] Question: For each experiment, does the paper provide sufficient information on the computer resources (type of compute workers, memory, time of execution) needed to reproduce the experiments?
    \item[] Answer: \answerYes{} % Replace by \answerYes{}, \answerNo{}, or \answerNA{}.
    \item[] Justification: See line 255 for reference.
    \item[] Guidelines:
    \begin{itemize}
        \item The answer NA means that the paper does not include experiments.
        \item The paper should indicate the type of compute workers CPU or GPU, internal cluster, or cloud provider, including relevant memory and storage.
        \item The paper should provide the amount of compute required for each of the individual experimental runs as well as estimate the total compute. 
        \item The paper should disclose whether the full research project required more compute than the experiments reported in the paper (e.g., preliminary or failed experiments that didn't make it into the paper). 
    \end{itemize}
    
\item {\bf Code Of Ethics}
    \item[] Question: Does the research conducted in the paper conform, in every respect, with the NeurIPS Code of Ethics \url{https://neurips.cc/public/EthicsGuidelines}?
    \item[] Answer: \answerYes{} % Replace by \answerYes{}, \answerNo{}, or \answerNA{}.
    \item[] Justification: The research conducted in the paper conforms, in every respect, with the NeurIPS Code of Ethics.
    \item[] Guidelines:
    \begin{itemize}
        \item The answer NA means that the authors have not reviewed the NeurIPS Code of Ethics.
        \item If the authors answer No, they should explain the special circumstances that require a deviation from the Code of Ethics.
        \item The authors should make sure to preserve anonymity (e.g., if there is a special consideration due to laws or regulations in their jurisdiction).
    \end{itemize}

\item {\bf Broader Impacts}
    \item[] Question: Does the paper discuss both potential positive societal impacts and negative societal impacts of the work performed?
    \item[] Answer: \answerNA{} % Replace by \answerYes{}, \answerNo{}, or \answerNA{}.
    \item[] Justification: There is no societal impact of the work performed.
    \item[] Guidelines:
    \begin{itemize}
        \item The answer NA means that there is no societal impact of the work performed.
        \item If the authors answer NA or No, they should explain why their work has no societal impact or why the paper does not address societal impact.
        \item Examples of negative societal impacts include potential malicious or unintended uses (e.g., disinformation, generating fake profiles, surveillance), fairness considerations (e.g., deployment of technologies that could make decisions that unfairly impact specific groups), privacy considerations, and security considerations.
        \item The conference expects that many papers will be foundational research and not tied to particular applications, let alone deployments. However, if there is a direct path to any negative applications, the authors should point it out. For example, it is legitimate to point out that an improvement in the quality of generative models could be used to generate deepfakes for disinformation. On the other hand, it is not needed to point out that a generic algorithm for optimizing neural networks could enable people to train models that generate Deepfakes faster.
        \item The authors should consider possible harms that could arise when the technology is being used as intended and functioning correctly, harms that could arise when the technology is being used as intended but gives incorrect results, and harms following from (intentional or unintentional) misuse of the technology.
        \item If there are negative societal impacts, the authors could also discuss possible mitigation strategies (e.g., gated release of models, providing defenses in addition to attacks, mechanisms for monitoring misuse, mechanisms to monitor how a system learns from feedback over time, improving the efficiency and accessibility of ML).
    \end{itemize}
    
\item {\bf Safeguards}
    \item[] Question: Does the paper describe safeguards that have been put in place for responsible release of data or models that have a high risk for misuse (e.g., pretrained language models, image generators, or scraped datasets)?
    \item[] Answer: \answerNA{} % Replace by \answerYes{}, \answerNo{}, or \answerNA{}.
    \item[] Justification: The paper poses no such risks.
    \item[] Guidelines:
    \begin{itemize}
        \item The answer NA means that the paper poses no such risks.
        \item Released models that have a high risk for misuse or dual-use should be released with necessary safeguards to allow for controlled use of the model, for example by requiring that users adhere to usage guidelines or restrictions to access the model or implementing safety filters. 
        \item Datasets that have been scraped from the Internet could pose safety risks. The authors should describe how they avoided releasing unsafe images.
        \item We recognize that providing effective safeguards is challenging, and many papers do not require this, but we encourage authors to take this into account and make a best faith effort.
    \end{itemize}

\item {\bf Licenses for existing assets}
    \item[] Question: Are the creators or original owners of assets (e.g., code, data, models), used in the paper, properly credited and are the license and terms of use explicitly mentioned and properly respected?
    \item[] Answer: \answerYes{} % Replace by \answerYes{}, \answerNo{}, or \answerNA{}.
    \item[] Justification: We have cited the original paper that produced the code package or dataset.
    \item[] Guidelines:
    \begin{itemize}
        \item The answer NA means that the paper does not use existing assets.
        \item The authors should cite the original paper that produced the code package or dataset.
        \item The authors should state which version of the asset is used and, if possible, include a URL.
        \item The name of the license (e.g., CC-BY 4.0) should be included for each asset.
        \item For scraped data from a particular source (e.g., website), the copyright and terms of service of that source should be provided.
        \item If assets are released, the license, copyright information, and terms of use in the package should be provided. For popular datasets, \url{paperswithcode.com/datasets} has curated licenses for some datasets. Their licensing guide can help determine the license of a dataset.
        \item For existing datasets that are re-packaged, both the original license and the license of the derived asset (if it has changed) should be provided.
        \item If this information is not available online, the authors are encouraged to reach out to the asset's creators.
    \end{itemize}

\item {\bf New Assets}
    \item[] Question: Are new assets introduced in the paper well documented and is the documentation provided alongside the assets?
    \item[] Answer:\answerNA{} % Replace by \answerYes{}, \answerNo{}, or \answerNA{}.
    \item[] Justification: The paper does not release new assets.
    \item[] Guidelines:
    \begin{itemize}
        \item The answer NA means that the paper does not release new assets.
        \item Researchers should communicate the details of the dataset/code/model as part of their submissions via structured templates. This includes details about training, license, limitations, etc. 
        \item The paper should discuss whether and how consent was obtained from people whose asset is used.
        \item At submission time, remember to anonymize your assets (if applicable). You can either create an anonymized URL or include an anonymized zip file.
    \end{itemize}

\item {\bf Crowdsourcing and Research with Human Subjects}
    \item[] Question: For crowdsourcing experiments and research with human subjects, does the paper include the full text of instructions given to participants and screenshots, if applicable, as well as details about compensation (if any)? 
    \item[] Answer: \answerNA{} % Replace by \answerYes{}, \answerNo{}, or \answerNA{}.
    \item[] Justification: The paper does not involve crowdsourcing nor research with human subjects.
    \item[] Guidelines:
    \begin{itemize}
        \item The answer NA means that the paper does not involve crowdsourcing nor research with human subjects.
        \item Including this information in the supplemental material is fine, but if the main contribution of the paper involves human subjects, then as much detail as possible should be included in the main paper. 
        \item According to the NeurIPS Code of Ethics, workers involved in data collection, curation, or other labor should be paid at least the minimum wage in the country of the data collector. 
    \end{itemize}

\item {\bf Institutional Review Board (IRB) Approvals or Equivalent for Research with Human Subjects}
    \item[] Question: Does the paper describe potential risks incurred by study participants, whether such risks were disclosed to the subjects, and whether Institutional Review Board (IRB) approvals (or an equivalent approval/review based on the requirements of your country or institution) were obtained?
    \item[] Answer: \answerNA{} % Replace by \answerYes{}, \answerNo{}, or \answerNA{}.
    \item[] Justification: The paper does not involve crowdsourcing nor research with human subjects.
    \item[] Guidelines:
    \begin{itemize}
        \item The answer NA means that the paper does not involve crowdsourcing nor research with human subjects.
        \item Depending on the country in which research is conducted, IRB approval (or equivalent) may be required for any human subjects research. If you obtained IRB approval, you should clearly state this in the paper. 
        \item We recognize that the procedures for this may vary significantly between institutions and locations, and we expect authors to adhere to the NeurIPS Code of Ethics and the guidelines for their institution. 
        \item For initial submissions, do not include any information that would break anonymity (if applicable), such as the institution conducting the review.
    \end{itemize}

\end{enumerate}

\end{document}